\definecolor{darkgreen}{rgb}{0,0.5,0}
\definecolor{darkblue}{rgb}{0,0,0.7}
\definecolor{darkred}{rgb}{0.9,0.1,0.1}
\newtheorem{theorem}{Theorem}
\newtheorem{proposition}[theorem]{Proposition}
\newtheorem{lemma}[theorem]{Lemma}
\newtheorem{corollary}[theorem]{Corollary}
\theoremstyle{definition}
\newtheorem{remark}[theorem]{Remark}
\newtheorem{assumption}{Assumption}
\newcommand{\tref}[1]{Theorem~\ref{t.#1}}
\newcommand{\pref}[1]{Proposition~\ref{p.#1}}
\newcommand{\lref}[1]{Lemma~\ref{l.#1}}
\newcommand{\cref}[1]{Corollary~\ref{c.#1}}
\newcommand{\rref}[1]{Remark~\ref{r.#1}}
\newcommand{\eref}[1]{(\ref{e.#1})}
\numberwithin{equation}{section}
\numberwithin{theorem}{section}
\newcommand{\K}{\mathbb{K}_{u_0}}
\newcommand{\Z}{\mathbb{Z}}
\newcommand{\C}{\mathbb{C}}
\newcommand{\N}{\mathbb{N}}
\newcommand{\R}{\mathbb{R}}
\renewcommand{\P}{\mathbb{P}}
\newcommand{\F}{\mathcal{F}}
\newcommand{\LL}{\mathbb{L}}
\newcommand{\e}{\varepsilon}
\newcommand{\I}{\mathcal{I}}
\newcommand{\Hh}{\mathcal{H}}
\newcommand{\Hhr}{\mathfrak{H}}
\newcommand{\loc}{\mathrm{loc}}
\newcommand{\Id}{\mathrm{Id}}
\renewcommand{\subset}{\subseteq}
\newcommand{\expec}[1]{\mathbb{E}\left[{#1}\right]}
\newcommand{\Msym}{\R_{\mathrm{sym}}^{d\times d}}
\newcommand{\Md}{\R^{d\times d}}
\newcommand{\Mdd}{(\R^{d\times d})^2_{\mathrm{sym}}}
\renewcommand{\L}{\mathcal{L}}
\renewcommand{\fint}{\strokedint}
\newcommand{\per}{\mathrm{per}}
\DeclareMathOperator{\dist}{dist}
\DeclareMathOperator{\supp}{supp}
\renewcommand{\bar}{\overline}
\renewcommand{\tilde}{\widetilde}
\begin{document}

\title[Loss of strong ellipticity through homogenization]{Loss of strong ellipticity through homogenization in 2D linear elasticity: A phase diagram}

\begin{abstract}
Since the seminal contribution of Geymonat, M\"uller, and Triantafyllidis, it is known that strong ellipticity is not necessarily conserved through periodic homogenization in linear elasticity.
This phenomenon is related to microscopic buckling of composite materials. 
Consider a mixture of two isotropic phases which leads to loss of strong ellipticity when arranged in a laminate manner, as considered
by Guti\'errez and by Briane and Francfort.
In this contribution we prove that the laminate structure is essentially the only microstructure which leads to such a loss of strong ellipticity. We perform a more general analysis in the stationary, ergodic setting.
\end{abstract}

\author[A. Gloria]{Antoine Gloria}
\address[Antoine Gloria]{Sorbonne Universit\'e, UMR 7598, Laboratoire Jacques-Louis Lions, Paris, France \& Universit\'e Libre de Bruxelles, Brussels, Belgium}\email{gloria@ljll.math.upmc.fr}

\author[M. Ruf]{Matthias Ruf}
\address[Matthias Ruf]{Universit\'e Libre de Bruxelles (ULB), Brussels, Belgium}
\email{matthias.ruf@ulb.ac.be}

\keywords{}
\subjclass[2010]{}
\date{\today}

\maketitle

%%%%%%%%%%%%%%%%%%%%%%%%%%%%%%%%%%%%%%%%%%%%%%
%%%%%%%%%%%%%%%%%%%%%%%%%%%%%%%%%%%%%%%%%%%%%%

\section{Introduction}

Consider a $Q=[0,1)^d$-periodic heterogeneous linear elastic material characterized by its elasticity tensor field $\LL:\R^d \to \Mdd$ (symmetric fourth-order tensors).
Assume that $\LL$ is pointwise very strongly elliptic, i.~e. $M \cdot \LL(x) M \ge \lambda |M|^2$ for some $\lambda>0$, all $M\in \Msym$, and almost all $x\in Q$.
Let $D$ be a Lipschitz bounded domain of $\R^d$, and $u_0 \in H^1(D)$.
Then the integral functional  $H^1_0(D)\ni u \mapsto \I_\e(u):=\int_D \nabla (u+u_0) \cdot \LL(\frac{\cdot}{\e})\nabla (u+u_0)$ $\Gamma$-converges for the weak topology of $H^1(D)$ 
to the homogenized integral functional $H^1_0(D)\ni u \mapsto \I_*(u):=\int_D \nabla (u+u_0) \cdot \LL_* \nabla (u+u_0)$, where $\LL_*$ is a constant elasticity tensor that is very strongly elliptic with constant $\lambda$, see for instance \cite{Tartar79,Francfort-Murat,Francfort92,JKO}.

\medskip

If instead of pointwise very strong ellipticity, we only assume pointwise strong ellipticity, i.~e. $M \cdot \LL(x) M \ge \lambda |M|^2$ for some $\lambda>0$, all \emph{rank-one} matrices $M=a\otimes b \in \Md$, and almost all $x\in Q$, the story is different. In an inspirational work \cite{GMT-93}, Geymonat, M\"uller, and Triantafyllidis indeed showed that three phenomena can occur:
\begin{enumerate}
\item $\I_\e$ is not uniformly bounded from below, and there is no homogenization;
\item $\I_\e$ is uniformly bounded from below, there is homogenization towards $\I_*$, and $\LL_*$ is strongly elliptic (that is, non-degenerate on rank-one matrices);
\item $\I_\e$ is uniformly bounded from below, there is homogenization towards $\I_*$, and $\LL_*$ is non-negative on rank-one matrices, but not strongly elliptic (there exists 
$a \otimes b\neq 0$ such that $a \otimes b\cdot \LL_* a\otimes b=0$).
\end{enumerate}
The third phenomenon is referred to as \emph{loss of strong ellipticity through homogenization}.
To avoid confusion we will say that a fourth-order tensor $\LL$ is strongly elliptic if $M\cdot \LL M\ge 0$ for all rank-one matrices, and that it is \emph{strictly} strongly elliptic if in addition there exists $\lambda>0$ such that this inequality can be strengthened to $M\cdot \LL M\ge \lambda |M|^2$ for rank-one matrices $M$.

\medskip

There is essentially one single example in the literature for which one can prove that strong ellipticity is lost through homogenization in dimension $d=2$, to which
we restrict in the third (and main) section of this article.
The associated composite material has a laminate structure made of two isotropic phases (a strong phase and a weak phase). 
Loss of strong ellipticity occurs when the strong phase buckles in compression (it is somehow related to the failure of the cell-formula for nonlinear composites, cf. \cite{Muller-87,GMT-93}),
and has been rigorously established in \cite{Gutierrez-98,Briane-Francfort-15}.

\medskip

Buckling is a one-dimensional phenomenon, and it seems unlikely from a mechanical point of view that a material could lose strong ellipticity in \emph{every} rank-one direction.
This elementary observation suggests that assuming the isotropy of $\LL_*$ may prevent loss of strong ellipticity through homogenization. In the two-dimensional periodic setting, this fact was proven by Francfort and the first author in \cite{Francfort-Gloria-16}. However, a closer look at the argument reveals that the loss of ellipticity is indeed prevented by the connectedness of the weak phase rather than by isotropy of $\LL_*$.
The aim of the present contribution is to investigate the interplay between the connectedness of phases and the loss of strong ellipticity through homogenization, in the more general setting of stochastic homogenization. 

\medskip

Denote by $\LL^1$ and $\LL^2$ the two isotropic elasticity tensors that may lead to loss of ellipticity through periodic homogenization, cf.~\cite{Gutierrez-98,Briane-Francfort-15}.
Let us state our main results in the periodic setting first, in which case $\LL=\chi \LL^1+(1-\chi)\LL^2$, where $\chi$
is the  $[0,1)^2$-periodic characteristic function of the strong phase. We assume for simplicity that the level sets of $\LL$ are locally flat (see Section \ref{sec:ishom} for more general geometric assumptions),
and consider three classes of microstructures:
\begin{itemize}
\item[Class~A:] The weak phase $\LL^2$ is  connected in $\R^2$, that is $\{\chi=0\}$ is connected;
\item[Class~B:] The strong phase $\LL^1$ is connected in $\R^2$,  that is $\{\chi=1\}$ is connected;
\item[Class~C:] Neither of the phases are connected in $\R^2$.  
\end{itemize}
Examples of such microstructures are displayed on Figure \ref{fig.structures} below.
\begin{figure}[ht]
	\includegraphics[scale=0.7]{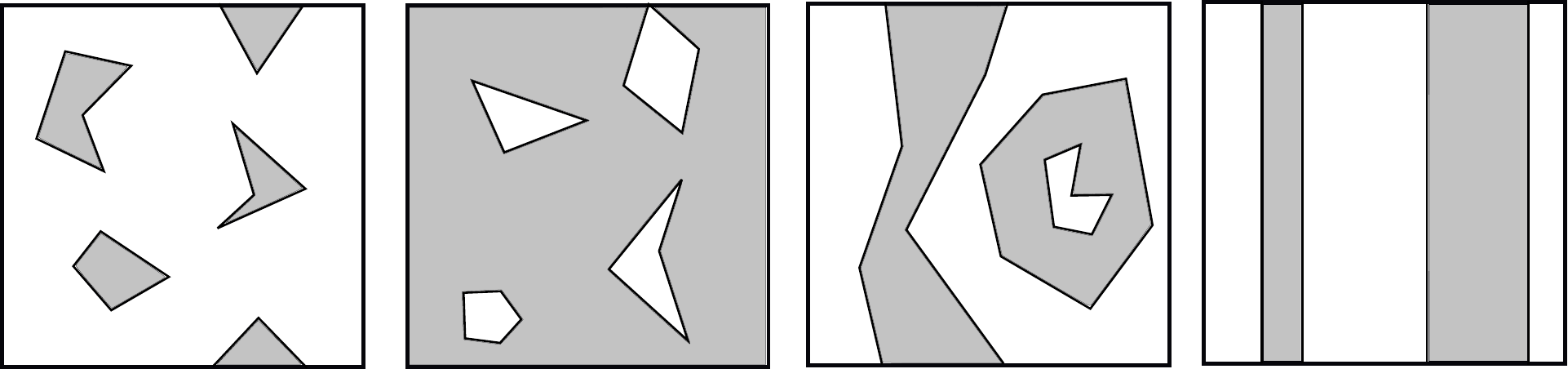}%
	\caption{Periodic microstructures. The strong phase is shaded in gray. From left to right: Class~A, Class~B, and Class~C (non-laminate, laminate).}
\end{figure}\label{fig.structures}

Our main result states that among these three general classes of microstructures (up to a technical geometric assumption on the level-sets, which we
 need for B and C to establish the solvability of an overdetermined elliptic equation on $\R^2$, cf.~\eqref{eq:THEPDE}), only laminate structures (thus a specific subclass of class C) with volume
fractions $\theta_1=\theta_2=\frac12$ (that is, precisely Guti\'errez' example) lead to loss of ellipticity, cf.~Table~\ref{e.table-per}.
\begin{center}
\begin{table}[ht]
\begin{tabular}{|l|c|c|}
\hline
& $\theta_1\ne \theta_2$ & $\theta_1=\theta_2=\frac12$
\\
\hline
Class A & no loss& no loss
\\
\hline
Class B& no loss& no loss
\\
\hline
Class C -- laminate& no loss& \textbf{loss}
\\
\hline
Class C -- non-laminate& no loss& no loss
\\
\hline
\end{tabular}

\bigskip

\caption{\emph{Loss} versus \emph{no loss} of ellipticity through periodic homogenization depending on the geometry of the microstructure and the volume fractions of weak and strong phases.}
\end{table}\label{e.table-per}
\end{center}

We perform our analysis in the general stationary ergodic setting (which is significantly more general than periodic microstructures and requires us to slightly revisit the linear theory  \cite{GMT-93}). In this generality, our results are not as definite as in Table~\ref{e.table-per} (due to a drastic lack of compactness).
The rest of this article is organized as follows.
In Section~\ref{sec:sthom} we recall basic facts on stochastic homogenization, and extend the elegant theory of Geymonat, M\"uller and Triantafyllidis \cite{GMT-93} to the random setting (by introducing a notion of random Bloch wave decomposition). The proofs are displayed in Section~\ref{sec:GMT}.
Section~\ref{sec:ishom} is dedicated to the main results of the paper: the estimates of the ellipticity constants for a mixture of Guti\'errez' isotropic
phases for various microscopic geometries, the proofs of which are postponed to Section~\ref{sec:proofs-ell}. 

\tableofcontents

%%%%%%%%%%%%%%%%%%%%%%%%%%%%%%%%%%%%%%%%%%%%%%
%%%%%%%%%%%%%%%%%%%%%%%%%%%%%%%%%%%%%%%%%%%%%%

\section{Stationary GMT theory}\label{sec:sthom}

We refer the reader to \cite[Chapter~12]{JKO} and \cite{DalMaso-Modica-86} for standard background on stochastic homogenization in linear elasticity.
We let $(\Omega,\F,\P)$ be a probability space, where we see $\Omega$ as the set of uniformly bounded measurable elasticity tensor fields $\omega:\R^d\to \Mdd$ (set of symmetric fourth-order tensors).
We assume that our measure $\P$ (which characterizes the microstructures) is invariant by integer shift, which means that for any $A\in \F$,
and all $z\in \Z^d$, $\P(A)=\P(T_zA)$ where $T_z(A)=\{\omega(\cdot+z)\,|\omega \in A\}$. We also assume that $\P$ is ergodic in the sense that if for some $A\in \F$
we have $T_z A\subset A$ for all $z\in \Z^d$, then $\P(A)=0$ or $\P(A)=1$.

\medskip

We then let $\LL$ be a random field distributed according to $\P$ --- note that $\LL$ is simply another name for $\omega$.
We use the standard redundant notation $\LL:\R^d\times \Omega \to \Mdd,(x,\omega)\mapsto \LL(x,\omega):=\omega(x)$ for the field,
and the standard notation $\LL(\cdot+z,\omega)=\LL(\cdot,\omega(\cdot+z))=\LL(x,T_z\omega)$ for the shifted version of $\LL$ by $z\in \Z^d$.

\medskip

A random field $v:\R^d\times \Omega \to \R^n$ ($n\in \N$) is a jointly measurable function of $\omega$ and $x$ (or equivalently of $\LL$ and $x$).
We say that $v$ is stationary if for all translations $z\in \Z^d$, we have $v(\cdot+z,\omega)=v(\cdot,T_z\omega)$, $\P$-almost surely.

\medskip

We start by recalling the standard stochastic homogenization result under the very strong ellipticity assumption.
To this aim, we define Hilbert spaces of (jointly measurable) stationary functions
\begin{eqnarray*}
\Hhr^0_1&:=&\Big\{ v\in L^2_\loc(\R^d,L^2(\Omega,\R^d))\,\Big| \, v(x+z,\omega)=v(x,T_{z}\omega) \\
&&\qquad\qquad\qquad\qquad\qquad\qquad \ \text{for a.e. }\omega\in \Omega, x \in \R^d,z\in \Z^d\Big\},
\\
\Hhr^1_1&:=&\Big\{ v\in H^1_\loc(\R^d,L^2(\Omega,\R^d))\,\Big| \, v(x+z,\omega)=v(x,T_{z}\omega) \\
&&\qquad\qquad\qquad\qquad\qquad\qquad\ \text{for a.e. }\omega\in \Omega, x \in \R^d,z\in \Z^d\Big\},
\end{eqnarray*}
endowed with the norms
\begin{equation*}
\|v\|_{\Hhr^0_1}^2\,=\, \expec{\int_{Q}|v(x,\cdot)|^2dx},\quad
\|v\|_{\Hhr^1_1}^2\,=\, \expec{\int_{Q}\left(|v(x,\cdot)|^2+|\nabla v(x,\cdot)|^2\right)dx},
\end{equation*}
where here (and in what follows) we denote by $Q_R=(-R/2,R/2)^d$ a cube with side-length $R$ and $Q=Q_1$.
The above spaces coincide with $L^2(Q)$ and $H^1_\per(Q)$ in the case when $\P$ charges $Q$-periodic maps. 
\begin{theorem}\label{t.JKO-standard}
Let $D$ be a bounded Lipschitz domain of $\R^d$.
Assume that $\P$ only charges very strongly elliptic tensor fields in the sense that there exists $0<\lambda\le 1$ such that for all $M\in \Msym$
and for almost every $x\in \R^d$ we have
\begin{equation}\label{e.ell-GMT}
\lambda |M|^2 \le M\cdot \LL(x,\omega)M\,\le \, |M|^2.
\end{equation}
Fix $u_0\in H^1(D)$ and set $\K:=\{u+u_0, u\in H^1_0(D)\}$.
Then the (random) integral functional $\I_\e:\K \to \R, u\mapsto \int_D \nabla u\cdot \LL(\frac{\cdot}{\e},\omega)\nabla u$ $\Gamma(L^2)$-converges to 
the (deterministic) integral functional $\I_*:\K \to \R, u\mapsto \int_D \nabla u\cdot \LL_*\nabla u$ as $\e \downarrow 0$ almost surely, where $\LL_*$ is a very strongly elliptic constant elasticity tensor characterized
by 
\begin{equation}\label{e.asfo-JKO}
M\cdot \LL_* M\,=\,\inf_{v \in \Hhr^1_1} \expec{\int_Q (M+\nabla v)\cdot \LL (M+\nabla v)}.
\end{equation}
\qed
\end{theorem}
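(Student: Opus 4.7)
The plan is to follow the standard variational approach to stochastic homogenization via correctors, adapted to the Hilbert spaces $\Hhr^1_1$ of stationary fields. Since both $\I_\e$ and $\I_*$ are strictly convex quadratic functionals of $\nabla u$, and since very strong ellipticity combined with Korn's inequality yields uniform $H^1$-coercivity both on $\K$ and on the space of stationary potential fields $\Lpo$, the analysis reduces to identifying $\LL_*$ and establishing the two $\Gamma$-inequalities along sequences bounded in $H^1(D)$ (hence precompact in $L^2(D)$).

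The first step is to construct $\LL_*$ via the corrector problem. For each $M\in\Msym$, the functional $v \mapsto \expec{\int_Q (M+\nabla v)\cdot \LL (M+\nabla v)}$ is strictly convex and coercive on $\Lpo$, because $\E[\nabla v]=0$ for stationary $v$ and the symmetric part of $\nabla v$ controls its full norm via Korn's inequality for stationary fields. Lax--Milgram yields a unique minimizer $\nabla\phi_M\in \Lpo$, and \eqref{e.asfo-JKO} defines $M\cdot \LL_* M$; polarization then gives the tensor $\LL_*$, which inherits the bounds $\lambda|M|^2\le M\cdot\LL_* M\le |M|^2$ from the pointwise bounds on $\LL$ together with $\E[\nabla\phi_M]=0$.

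For the $\Gamma$-liminf inequality, I take $u_\e\to u$ in $L^2(D)$ with $\sup_\e \I_\e(u_\e)<\infty$; coercivity gives $u_\e\rightharpoonup u$ in $H^1(D)$. For any constant matrix $M\in\Msym$, expanding the nonnegative quantity $\int_D (\nabla u_\e - M - \nabla \phi_M(\cdot/\e,\omega))\cdot \LL(\cdot/\e,\omega)(\nabla u_\e - M - \nabla \phi_M(\cdot/\e,\omega))\ge 0$ and invoking Birkhoff's ergodic theorem on the stationary quadratic form $(M+\nabla\phi_M)\cdot\LL(M+\nabla\phi_M)$, together with the div-curl lemma for the cross term (in which $\nabla u_\e$ is curl-free and the flux $\LL(\cdot/\e,\omega)(M+\nabla\phi_M(\cdot/\e,\omega))$ is divergence-free by the corrector equation and converges weakly in $L^2$ to its expectation $\LL_* M$), yields $\liminf_\e \I_\e(u_\e)\ge 2\int_D \nabla u \cdot \LL_* M - \int_D M\cdot\LL_* M$. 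Localizing $M$ by a piecewise constant function approximating $e(u)$ in $L^2(D;\Msym)$ (with partition-of-unity cutoffs) and passing to the limit gives $\liminf_\e \I_\e(u_\e) \ge \I_*(u)$. The $\Gamma$-limsup is obtained by constructing, for smooth $u\in C^\infty(\bar D)\cap\K$, a recovery sequence $u_\e(x):=u(x)+\e\,\eta_\e(x)\sum_k \phi_{M_k}(x/\e,\omega)\mathbf{1}_{A_k}(x)$ based on a piecewise constant approximation $\{M_k,A_k\}$ of $e(u)$ with a boundary cutoff $\eta_\e$; direct expansion combined with ergodicity yields $\I_\e(u_\e)\to\I_*(u)$ up to an $o(1)$ error from the approximation, and a diagonal density argument extends the bound to general $u\in\K$.

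The main technical obstacle is the passage from convergence in expectation to almost-sure convergence: one must select a countable dense family of matrices $M$ and of boundary data for which Birkhoff's ergodic theorem and the associated div-curl identities hold simultaneously on a single event of full probability, uniformly along the sequence $\e\downarrow 0$. A secondary subtlety, invisible in the periodic case, is that the correctors $\phi_M$ are in general not stationary; only their gradients are, and the sublinearity of $\e\phi_M(\cdot/\e,\omega)$ as $\e\downarrow 0$ --- which is what is actually needed to treat it as a small boundary-layer perturbation in the recovery sequence --- must be established via the ergodic theorem applied to $\nabla\phi_M$.
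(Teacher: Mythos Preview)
The paper does not prove this theorem: it is stated as a known result (note the \qed\ immediately after the statement and the references to \cite{Tartar79,Francfort-Murat,Francfort92,JKO,DalMaso-Modica-86} in the introduction and at the start of Section~\ref{sec:sthom}), so there is no proof to compare your proposal against. Your outline is a standard and essentially correct route to the result.

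It may still interest you that when the paper proves the closely related but more general Theorem~\ref{t.alaGMT}, the $\Gamma$-liminf is obtained differently: rather than the div-curl/compensated-compactness argument you sketch, the authors regularize $\LL$ by $\LL^\eta=\LL+\eta\mathsf{1}_4$, invoke the standard very-strongly-elliptic homogenization (i.e.\ precisely Theorem~\ref{t.JKO-standard}) for each $\eta>0$, and then let $\eta\downarrow 0$. Your div-curl approach is more self-contained (it does not presuppose the coercive case), while the paper's regularization trick is shorter once Theorem~\ref{t.JKO-standard} is taken as a black box. For the $\Gamma$-limsup, both your sketch and the paper's Step~3 of the proof of Theorem~\ref{t.alaGMT} use the same corrector-based two-scale ansatz and the sublinearity of $\phi_M$.
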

\begin{remark}
If $\P$ is a Dirac measure on some periodic tensor field $\LL$ then \eqref{e.asfo-JKO} is the standard (periodic) cell-formula. 
Although the minimizer is attained at a periodic deformation field $u_M$ for a periodic tensor field $\LL$, it is not attained in $\Hhr^1_1$ in general for (genuinely) random tensor fields (due to the lack of compactness related to the absence of Poincar\'e inequality in $\Hhr^1_1$ --- indeed, the latter would essentially imply a Poincar\'e inequality 
in $H^1(D)$ with a constant independent of the size of $D$,  contradicting the scaling of the inequality).
\qed
\end{remark}

We now turn to the extension of the GMT theory (cf. \cite[Section~3]{GMT-93}) to this random setting.
We no longer assume that $\P$ only charges \emph{very} strongly tensor fields, but only require the lower bound in \eqref{e.ell-GMT} to
hold for rank-one matrices $M=a\otimes b$ (the upper-bound is unchanged). 
We introduce the following main measures of coercivity: 
\begin{eqnarray*}
\Lambda&=&\expec{\inf_{u\in C^\infty_0(\R^d,\R^d)}\frac{\int_{\R^d}{\nabla u}\cdot \LL\nabla u}{\int_{\R^d}|\nabla u|^2}},
\\
\Lambda_4&=& \inf\left\{ \frac{\expec{\int_Q ({a\otimes b+\nabla v}) \cdot \LL (a\otimes b+\nabla v)}}{\expec{\int_Q |a\otimes b +\nabla v |^2}}\,\bigg|\, a,b\in \R^d,v\in \Hhr^1_1\right\},\\
\Lambda_6&=& \inf\left\{ \frac{\expec{\int_Q {\nabla v} \cdot \LL \nabla v}}{\expec{\int_Q |\nabla v |^2}}\,\bigg|\,v\in \Hhr^1_1\right\}.
\end{eqnarray*}
\begin{remark}
In the periodic setting considered in \cite{GMT-93}, the corresponding ellipticity constants are recovered by dropping the expectations, and replacing $\Hhr^1_1$
by $H^1_\per(Q,\R^d)$.
\qed
\end{remark}
In \cite[Lemma 4.2]{GMT-93} the authors prove several characterizations of $\Lambda$. In the stationary ergodic setting, a similar analysis is possible, which we include for the sake of completeness. To this end we first extend the real-valued spaces $\Hhr$ to complex-valued (jointly measurable) $N$-stationary function spaces: For all $N\in \N$ we set
\begin{eqnarray*}
	\Hh^0_N&:=&\Big\{ v\in L^2_\loc(\R^d,L^2(\Omega,\C^d))\,\Big| \, v(x+Nz,\omega)=v(x,T_{Nz}\omega) \\
	&&\qquad\qquad\qquad\qquad\qquad\qquad \ \text{for a.e. }\omega\in \Omega, x \in \R^d,z\in \Z^d\Big\},
	\\
	\Hh^1_N&:=&\Big\{ v\in H^1_\loc(\R^d,L^2(\Omega,\C^d))\,\Big| \, v(x+Nz,\omega)=v(x,T_{Nz}\omega) \\
	&&\qquad\qquad\qquad\qquad\qquad\qquad\ \text{for a.e. }\omega\in \Omega, x \in \R^d,z\in \Z^d\Big\},
\end{eqnarray*}
endowed with the norms
\begin{equation*}
\|v\|_{\Hh^0_N}^2\,=\, \expec{\int_{Q_N}|v(x,\cdot)|^2dx},\quad
\|v\|_{\Hh^1_N}^2\,=\, \expec{\int_{Q_N}\left(|v(x,\cdot)|^2+|\nabla v(x,\cdot)|^2\right)dx}.
\end{equation*}
Then we introduce the following additional coercivity constants:
\begin{eqnarray*}
	\Lambda_1&=& \inf\left\{ \frac{\expec{\int_Q \bar{\nabla v} \cdot \LL \nabla v}}{\expec{\int_Q |\nabla v |^2}}\,\bigg|\, v(x,\omega)=e^{i\gamma\cdot x}p(x,\omega),\gamma\in [0,2\pi)^d,
	p\in \Hh^1_1\right\},\\
	\Lambda_2&=&\inf\left\{ \frac{\expec{\int_{Q_N} \bar{\nabla v} \cdot \LL \nabla v}}{\expec{\int_{Q_N} |\nabla v |^2}}\,\bigg|\, N\geq 1,v(x,\omega)=e^{i\gamma\cdot x}p(x,\omega),\gamma\in \R^d,
	p\in \Hh^1_N\right\},\\
	\Lambda_3&=&\inf\left\{ \frac{\expec{\int_{Q_N} \bar{\nabla v} \cdot \LL \nabla v}}{\expec{\int_{Q_N} |\nabla v |^2}}\,\bigg|\, N\geq 1,v\in \Hh^1_N\right\},
	\\
	\Lambda_5&=&\liminf_{\gamma\downarrow 0}\quad \inf\left\{ \frac{\expec{\int_{Q} \bar{\nabla v} \cdot \LL \nabla v}}{\expec{\int_{Q} |\nabla v |^2}}\,\bigg|\, v(x,\omega)=e^{i\gamma\cdot x}p(x,\omega),
	p\in \Hh^1_1\right\}.\\
\end{eqnarray*}
The next result is the analogue to \cite[Lemma 4.2]{GMT-93}. Its proof relies on a stochastic version of the Bloch wave transform.
\begin{lemma}\label{l.Lambda123}
	If $\Lambda\geq 0$, then $\Lambda=\Lambda_1=\Lambda_2=\Lambda_3$.\qed
\end{lemma}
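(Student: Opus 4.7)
My plan is to establish $\Lambda = \Lambda_1 = \Lambda_2 = \Lambda_3$ via a cycle of inequalities, following the strategy of \cite[Lemma~4.2]{GMT-93} adapted to the stationary ergodic setting through a random Bloch--Floquet transform. Two inequalities are immediate by specialization of parameters in $\Lambda_2$: setting $N=1$ yields $\Lambda_2 \leq \Lambda_1$, and setting $\gamma = 0$ yields $\Lambda_2 \leq \Lambda_3$. The first substantive step is to prove $\Lambda \leq \Lambda_i$ for $i \in \{1,2,3\}$ by truncation. Given a candidate $v$ for $\Lambda_i$---of the form $e^{i\gamma\cdot x}p(x,\omega)$ with $p \in \Hh^1_N$ when $i \in \{1,2\}$, or $v \in \Hh^1_N$ when $i=3$---I introduce a smooth cutoff $\phi_R$ with $\phi_R \equiv 1$ on $Q_{R-1}$, $\supp \phi_R \subset Q_R$, and $|\nabla \phi_R| \leq C$, and set $u_R := \phi_R v \in H^1_c(\R^d,\C^d)$. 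Splitting $u_R$ into real and imaginary parts and exploiting $\Lambda \geq 0$, one obtains
\begin{equation*}
\int_{\R^d} \overline{\nabla u_R}\cdot \LL\,\nabla u_R\,dx \;\geq\; \Lambda \int_{\R^d} |\nabla u_R|^2\,dx.
\end{equation*}
The Bloch phase drops out of $|\nabla v|^2$ and $\overline{\nabla v}\cdot \LL \nabla v$, leaving $N$-stationary densities in $(\nabla + i\gamma\otimes)p$. Birkhoff's ergodic theorem gives that their spatial averages on $Q_R$ converge almost surely to their expectations on $Q_N$, while the boundary terms involving $\nabla\phi_R$ are of order $R^{d-1}$ and hence negligible against the $R^d$ bulk. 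Passing to $R \to \infty$ yields $\Lambda \leq \Lambda_i$.

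For $\Lambda_3 \leq \Lambda$ I will use a stationarization procedure. Ergodicity makes $\Lambda$ almost surely equal to the deterministic Rayleigh infimum over $C^\infty_c(\R^d,\R^d)$; a measurable selection argument applied to a countable dense family of test functions (combined with truncation to a high-probability event to fix a uniform spatial scale) produces, for every $\varepsilon > 0$, a scale $R$ and an $\omega$-measurable family $u_\omega \in H^1_0(Q_R,\R^d)$ whose Rayleigh quotient is at most $\Lambda + \varepsilon$. Tiling $\R^d$ with disjoint stochastic copies on a $2R$-grid,
\begin{equation*}
v(x,\omega) := u_{T_{2R\lfloor x/(2R)\rfloor}\omega}\bigl(x - 2R\lfloor x/(2R)\rfloor\bigr) \;\in\; \Hh^1_{2R},
\end{equation*}
yields a $\Lambda_3$-candidate whose Rayleigh quotient is bounded by $\Lambda + \varepsilon$; letting $\varepsilon \to 0$ gives $\Lambda_3 \leq \Lambda$. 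Combined with the previous step, this establishes $\Lambda = \Lambda_2 = \Lambda_3$ and leaves only $\Lambda_1 \leq \Lambda$ to be shown.

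This last inequality is the main obstacle, and is where the stochastic Bloch--Floquet transform (developed in Section~\ref{sec:GMT}) is essential. The plan is to associate, to any approximate minimizer $u \in C^\infty_c(\R^d,\R^d)$ for $\Lambda$, a $\gamma$-indexed family of Bloch modes of the form $e^{i\gamma\cdot x} p_\gamma(x,\omega)$ with $p_\gamma \in \Hh^1_1$, engineered by coupling each spatial shift $z \in \Z^d$ to the action $T_z$ on $\Omega$ so that the resulting $p_\gamma$ is genuinely $1$-stationary. One then proves a Parseval-type identity
\begin{equation*}
\int_{\R^d} \overline{\nabla u}\cdot \LL\,\nabla u\,dx = \frac{1}{(2\pi)^d}\int_{[0,2\pi)^d}\expec{\int_Q \overline{(\nabla + i\gamma\otimes)p_\gamma}\cdot \LL(\nabla + i\gamma\otimes)p_\gamma\,dx}\,d\gamma,
\end{equation*}
together with its analogue where $\LL$ is replaced by the identity. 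The per-fiber bound $A(\gamma)/B(\gamma) \geq \Lambda_1$ combined with the elementary inequality $\int A / \int B \geq \inf_\gamma A(\gamma)/B(\gamma)$ then transfers to the full Rayleigh quotient, giving $\Lambda_1 \leq \Lambda$ after taking the infimum over $u$. The hard part will be constructing this stochastic Bloch transform and proving the Parseval identity against the random quadratic form $\LL$: the spatial phase $e^{-i\gamma\cdot z}$ must be coupled to the shift $T_z$ on $\Omega$ in precisely the right way so that stationarity of $\LL$ annihilates the cross-mode contributions in expectation---a mechanism with no deterministic counterpart in the classical periodic Bloch theory.
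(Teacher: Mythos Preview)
Your plan is correct and coincides with the paper's on the two central ingredients: the inequality $\Lambda_1 \geq \Lambda$ by cutoff and ergodic theorem (the paper's Step~4) and the inequality $\Lambda \geq \Lambda_1$ via the stochastic Bloch transform coupling spatial shifts $z$ with the probabilistic shifts $T_{-z}$ (the paper's Steps~2--3, including the Parseval identity you sketch). The organization of the remaining inequalities differs. The paper proves $\Lambda_2 \geq \Lambda_1$ through a \emph{discrete} Bloch transform indexed by $\{0,\tfrac{2\pi}{N},\dots,\tfrac{2\pi(N-1)}{N}\}^d$ (Step~5) and $\Lambda_1 \geq \Lambda_3$ by periodizing a cutoff of a $\Lambda_1$-competitor into $\Hh^1_N$ (Step~6), closing with the trivial $\Lambda_3 \geq \Lambda_2$. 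You instead combine the trivial inclusions $\Lambda_2 \leq \Lambda_1$ and $\Lambda_2 \leq \Lambda_3$ with a direct tiling construction $\Lambda_3 \leq \Lambda$ built from a measurable, uniformly supported, normalized quasi-minimizer of $\Lambda$. Your route is more economical in that it dispenses with the discrete Bloch variant entirely; the paper's route has the advantage of never needing the ergodic theorem for $N$-stationary integrands with $N>1$.

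Two technical caveats. First, your Parseval identity must carry an expectation on both sides (the left-hand side as written is random); your remark that the cross-modes vanish ``in expectation'' shows you are aware of this. Second, your cutoff argument for $\Lambda \leq \Lambda_2$ and $\Lambda \leq \Lambda_3$ invokes Birkhoff for $N$-stationary integrands and asserts convergence to the \emph{unconditional} expectation, but ergodicity is assumed only for the $\Z^d$-action and need not pass to the subgroup $N\Z^d$. The fix is routine: Birkhoff without ergodicity gives a.s.\ convergence of the spatial averages to conditional expectations with respect to the $T_{N\cdot}$-invariant $\sigma$-algebra, the pointwise Rayleigh bound then yields the a.s.\ inequality between conditional numerator and $\Lambda$ times conditional denominator, and taking full expectation recovers what you need. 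The paper avoids this issue by only running the cutoff/ergodic step at $N=1$.
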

The following result is the counterpart of \cite[Theorem~3.3(i)]{GMT-93} for random coefficients. 
\begin{proposition}\label{p.order}
If $\Lambda\ge 0$, then we have $\Lambda_6\ge \Lambda_4\ge \Lambda$.
\qed
\end{proposition}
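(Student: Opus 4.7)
I would establish the two inequalities separately: the first is essentially tautological, while the second rests on a Bloch-wave approximation that reduces the problem to $\Lambda_1$, which equals $\Lambda$ by \lref{Lambda123}.

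For $\Lambda_6 \ge \Lambda_4$, it suffices to observe that the admissible class in the definition of $\Lambda_4$ includes the choice $a=0$, for which $a\otimes b = 0$ and the ratio reduces exactly to the one defining $\Lambda_6$. Hence $\Lambda_4$ is an infimum over a strictly larger class than $\Lambda_6$, so $\Lambda_4 \le \Lambda_6$.

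For $\Lambda_4 \ge \Lambda$, I would fix $a,b \in \R^d$ and $v \in \Hhr^1_1$ for which $B := \expec{\int_Q |a\otimes b + \nabla v|^2} > 0$, write $A := \expec{\int_Q (a\otimes b + \nabla v) \cdot \LL(a\otimes b + \nabla v)}$, and aim to show $A/B \ge \Lambda$. The key idea is to realize $a\otimes b + \nabla v$, up to an $O(\e)$ perturbation, as the gradient of a Bloch-wave competitor for $\Lambda_1$. For small $\e>0$ set $\gamma_\e := \e b$ (understood modulo $2\pi\Z^d$ so that it lies in $[0,2\pi)^d$) and $p_\e(x,\omega) := v(x,\omega) - i a/\e$; then $p_\e \in \Hh^1_1$ since any deterministic constant is trivially stationary. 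The function $w_\e := e^{i\gamma_\e\cdot x} p_\e$ is thus admissible for $\Lambda_1$, and a direct computation yields
\begin{equation*}
\nabla w_\e \,=\, e^{i\gamma_\e\cdot x}\bigl(a\otimes b + \nabla v + i\e\, v\otimes b\bigr).
\end{equation*}
Since $\LL$ is real and symmetric while the bracket has real part $a\otimes b + \nabla v$ and imaginary part $\e\,v\otimes b$, the Hermitian cross terms in $\overline{\nabla w_\e}\cdot \LL \nabla w_\e$ cancel, giving
\begin{equation*}
\expec{\int_Q \overline{\nabla w_\e}\cdot \LL \nabla w_\e} = A + \e^2 C, \qquad \expec{\int_Q |\nabla w_\e|^2} = B + \e^2 D,
\end{equation*}
for finite constants $C,D$ depending only on $v$ and $b$. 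The defining inequality for $\Lambda_1$ together with $\Lambda_1 = \Lambda$ then yields $(A + \e^2 C)/(B + \e^2 D) \ge \Lambda$, and sending $\e \downarrow 0$ gives $A/B \ge \Lambda$. Taking the infimum over $a,b,v$ completes the proof.

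The main subtle point will be the cancellation of the first-order-in-$\e$ cross terms in the numerator, which crucially exploits both the real-symmetry of $\LL$ and the fact that $v$ takes real values; without these, an imaginary or order-$\e$ contribution would survive and spoil the limit $\e \downarrow 0$. A minor bookkeeping issue is to place $\gamma_\e$ in the fundamental domain $[0,2\pi)^d$, which amounts to absorbing a factor $e^{2\pi i k\cdot x}$ into $p_\e$ without changing $\nabla w_\e$. Finally, the hypothesis $\Lambda \ge 0$ enters only implicitly, through \lref{Lambda123}, to identify $\Lambda_1$ with $\Lambda$.
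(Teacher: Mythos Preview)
Your proof is correct and follows essentially the same route as the paper. The paper also observes that $\Lambda_6\ge\Lambda_4$ is trivial, and for $\Lambda_4\ge\Lambda$ it constructs the very same Bloch-wave competitor $w_\e(x)=e^{i\e b\cdot x}\bigl(v(x)+a/(i\e)\bigr)$, whose gradient converges in $L^2(\Omega\times Q)$ to $a\otimes b+\nabla v$; the only cosmetic difference is that the paper phrases the conclusion as $\Lambda_4\ge\Lambda_5$ (the $\liminf_{\gamma\downarrow 0}$ version) and then uses the obvious $\Lambda_5\ge\Lambda_1=\Lambda$, whereas you compare directly to $\Lambda_1$.
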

\begin{remark}\label{r.Lambda45}
	In the proof of \pref{order}  we show the inequality $\Lambda_4\ge \Lambda_5$. In the periodic setting we have in addition  $\Lambda_4=\Lambda_5$. The proof
	of this identity in \cite{GMT-93} uses however Poincar\'e's inequality, which does not hold in the present random setting ---  this identity is not needed for our results.
	\qed	
\end{remark}
Before we turn to the homogenization properties, let us give an interpretation of the main ellipticity constants:
\begin{itemize}
\item $\Lambda$ measures the global coercivity of the nonhomogeneous tensor $\LL$. 
\item $\Lambda_4$ measures coercivity with respect to shearing deformations (modulo $\Z^d$-stationary contributions).
\item $\Lambda_6$ measures coercivity with respect to $\Z^d$-stationary, possibly highly localized deformations.
\end{itemize}
The following result is a strict generalization of \cite[Theorem~3.4]{GMT-93}:
\begin{theorem}\label{t.alaGMT}
Let $D$ be a Lipschitz bounded domain, $u_0\in H^1(D)$, and $\K:=\{u=u_0+v,v\in H^1_0(D)\}$.
For all $\e>0$, define $\I_\e:\K\to \R, u\mapsto \int_D \nabla u\cdot \LL(\frac{\cdot}{\e})\nabla u$.
If $\Lambda\ge 0$ and $\Lambda_6>0$, then $\I_\e$ $\Gamma(w-H^1)$-converges on $\K$ to $\I_*:\K\to \R,u\mapsto \int_D \nabla u\cdot \LL_*\nabla u$
almost surely, where $\LL_*$ is given in direction $M\in \Md$ by
$$
M \cdot \LL_*M\,:=\,\inf_{u\in \Hhr^1_1} \expec{\int_Q (M+\nabla u)\cdot \LL(M+\nabla u)}.
$$
In addition, if $\Lambda_4>0$, then $\LL_*$ is strictly strongly elliptic, whereas if $\Lambda_4=0$ then $\LL_*$ loses strict strong ellipticity and there exists
a rank-one matrix $a\otimes b\in \Md$ such that $a\otimes b \cdot \LL_* a\otimes b=0$.
\qed
\end{theorem}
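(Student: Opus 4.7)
The approach is to reduce to \tref{JKO-standard} via the null-Lagrangian modification trick of \cite{GMT-93} and then extract the strict strong ellipticity of $\LL_*$ directly from the asymptotic formula, distinguishing the cases $\Lambda_4 > 0$ and $\Lambda_4 = 0$. The main obstacle will be the $\Gamma$-convergence part: verifying very strong ellipticity of the modified tensor and equi-coercivity of the modified functional in weak $H^1$ in the stationary ergodic setting, despite the absence of a Poincar\'e inequality in $\Hhr^1_1$ --- this is where the assumption $\Lambda_6 > 0$ is decisive.

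\emph{Step 1 ($\Gamma$-convergence).} Fix a constant symmetric fourth-order tensor $\LL_0$ whose quadratic form is a null Lagrangian (in dimension two, $M \cdot \LL_0 M = 2\det M$; more generally, a quadratic form in $2\times 2$ minors), so that $\int_D \nabla u \cdot \LL_0 \nabla u$ depends only on the boundary trace of $u$. Set $\tilde\LL_t = \LL + t \LL_0$ and $\tilde\I_\e(u) = \int_D \nabla u \cdot \tilde\LL_t(\cdot/\e) \nabla u$. Because $\LL_0$ is a null Lagrangian, $\tilde\I_\e - \I_\e$ is constant on $\K$ (depending only on $u_0$). For $t$ large enough, the pointwise strong ellipticity of $\LL$ together with the structure of $\LL_0$ yields pointwise very strong ellipticity of $\tilde\LL_t$ --- this is the classical coercification trick of \cite{GMT-93}, with $\Lambda \ge 0$ playing its role via a test function argument. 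Applying \tref{JKO-standard} to $\tilde\LL_t$ then yields $\Gamma(w-H^1)$-convergence of $\tilde\I_\e$ to $\tilde\I_*$ with $\tilde\LL_*$ given by a corrector formula over $\Hhr^1_1$, and subtracting the constant transfers the convergence to $\I_\e \to \I_*$.

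\emph{Step 2 (formula for $\LL_*$).} Set $\LL_* = \tilde\LL_* - t \LL_0$. For any $M \in \Md$ and any $v \in \Hhr^1_1$, the mean-zero stationarity of $\nabla v$ combined with the null-Lagrangian structure of $\LL_0$ yields
\begin{equation*}
\expec{\int_Q (M + \nabla v) \cdot \LL_0 (M + \nabla v)} \,=\, M \cdot \LL_0 M,
\end{equation*}
the vanishing of the nontrivial part $\expec{\int_Q \nabla v \cdot \LL_0 \nabla v}$ being the stationary ergodic analogue of the classical vanishing of null Lagrangians on periodic gradients, obtained by passing to the limit in $R^{-d}\int_{Q_R} \nabla v \cdot \LL_0 \nabla v$ after integration by parts. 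Subtracting yields $M \cdot \LL_* M = \inf_{v \in \Hhr^1_1} \expec{\int_Q (M + \nabla v) \cdot \LL (M + \nabla v)}$, as stated.

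\emph{Step 3 (dichotomy).} For rank-one $M = a \otimes b$, mean-zero stationarity of $\nabla v$ gives $\expec{\int_Q |M + \nabla v|^2} = |M|^2 + \expec{\int_Q |\nabla v|^2} \ge |M|^2$. If $\Lambda_4 > 0$, the definition of $\Lambda_4$ yields $\expec{\int_Q (M + \nabla v) \cdot \LL (M + \nabla v)} \ge \Lambda_4 |M|^2$ for every admissible $v$, so $M \cdot \LL_* M \ge \Lambda_4 |M|^2$ and $\LL_*$ is strictly strongly elliptic. If $\Lambda_4 = 0$, pick a minimizing sequence $(a_n \otimes b_n, v_n)$ with $|a_n \otimes b_n|^2 + \expec{\int_Q |\nabla v_n|^2} = 1$ and $\expec{\int_Q (a_n \otimes b_n + \nabla v_n) \cdot \LL (a_n \otimes b_n + \nabla v_n)} \to 0$. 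Expanding this last quantity into the three terms
\begin{equation*}
\expec{\int_Q a_n \otimes b_n \cdot \LL\, a_n \otimes b_n} + 2\expec{\int_Q a_n \otimes b_n \cdot \LL \nabla v_n} + \expec{\int_Q \nabla v_n \cdot \LL \nabla v_n},
\end{equation*}
bounding the first two by $C |a_n \otimes b_n|$ (via the upper bound on $\LL$ and Cauchy-Schwarz) and using $\expec{\int_Q \nabla v_n \cdot \LL \nabla v_n} \ge \Lambda_6 \expec{\int_Q |\nabla v_n|^2}$ rules out the scenario $|a_n \otimes b_n| \to 0$ (otherwise the whole expression would exceed $\Lambda_6/2 > 0$ for large $n$). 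Hence $|a_n \otimes b_n|$ is bounded below along a subsequence; extract $a_n \otimes b_n \to a \otimes b \ne 0$, still rank-one by closedness in $\Md$. The corrector formula gives $a_n \otimes b_n \cdot \LL_* a_n \otimes b_n \le \expec{\int_Q (a_n \otimes b_n + \nabla v_n) \cdot \LL (a_n \otimes b_n + \nabla v_n)} \to 0$, and continuity gives $a \otimes b \cdot \LL_* a \otimes b \le 0$; combined with nonnegativity of $\LL_*$ on rank-ones (from $\Lambda \ge 0$, \pref{order}, and the corrector formula), we conclude $a \otimes b \cdot \LL_* a \otimes b = 0$.
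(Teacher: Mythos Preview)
Your Step~3 is correct and matches the paper's argument essentially verbatim. The problem is Step~1: the claim that adding a constant null-Lagrangian tensor $t\LL_0$ renders $\tilde\LL_t=\LL+t\LL_0$ \emph{pointwise} very strongly elliptic is false in general, and fails precisely in the situation the theorem is designed for. Since a null Lagrangian vanishes on rank-one matrices, adding $t\LL_0$ leaves the rank-one behaviour unchanged; on full-rank symmetric matrices it can only shift mass around. For a \emph{single} constant strongly elliptic isotropic tensor in $d=2$ one can indeed find such a $t$, but for a two-phase composite the admissible ranges for the two phases need not overlap. Concretely, with the Guti\'errez parameters \eqref{eq:cond}, writing $M\cdot(\LL^i+2t\det)M=(\lambda_i+t)(\tr M)^2+(2\mu_i-t)|M|^2$ on symmetric $M$, positivity for phase~1 forces $t<2\mu_1$ while positivity for phase~2 forces $t>-2(\lambda_2+\mu_2)=2\mu_1$; no common $t$ exists. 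Hence \tref{JKO-standard} cannot be invoked, and your $\Gamma$-convergence argument collapses. Note also that adding a null Lagrangian cannot help at the level of the \emph{functional} either: by your own observation $\tilde\I_\e-\I_\e$ is constant on $\K$, so coercivity is untouched.

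The paper circumvents this by a genuinely different route. For the $\Gamma$-liminf it regularizes via $\LL^\eta=\LL+\eta\mathsf{1}_4$, which \emph{is} pointwise very strongly elliptic and to which \tref{JKO-standard} applies; one then lets $\eta\downarrow0$ using the uniform $H^1$-bound on the given weakly convergent sequence. For the $\Gamma$-limsup the paper does not reduce to \tref{JKO-standard} at all: it uses $\Lambda_6>0$ directly to establish coercivity of the map $q\mapsto\expec{\int_Q(M+\nabla q)\cdot\LL(M+\nabla q)}$ on stationary gradient fields, hence existence of a corrector $\phi_M$, and then builds the recovery sequence $u_\e(x)=u(x)+\e\pi(x/\e)\nabla u(x)$ by hand. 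In your sketch the hypothesis $\Lambda_6>0$ is never used for the $\Gamma$-convergence, which is a symptom of the gap.
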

\begin{remark}\label{r.BraidesBriane}
Braides and Briane proved the $\Gamma$-convergence result of \tref{alaGMT} under the sole assumption
of $\Lambda\ge 0$ using soft arguments (see \cite[Theorem 2.4]{Briane-PallaresMartin}). Although their result is stated only in the periodic setting, the proof easily extends to the random stationary ergodic setting.  This remark will be used when we cannot 
establish the bound $\Lambda_6>0$.
\qed
\end{remark}
\begin{remark}
In the recent preprint \cite{Briane-Francfort-18}, Briane and Francfort improved \tref{alaGMT} 
by replacing the $\Gamma$-convergence result for the weak topology of $H^1(D)$ by a $\Gamma$-convergence result
for the weak $L^2(D)$-topology under two assumptions: the microstructure is periodic (two-scale convergence is used in the proof)
and is of class A or C (laminate) --- cf.~the introduction.
\qed
\end{remark}
%

%%%%%%%%%%%%%%%%%%%%%%%%%%%%%%%%%%%%%%%%%%%%%%
%%%%%%%%%%%%%%%%%%%%%%%%%%%%%%%%%%%%%%%%%%%%%%

\section{Stochastic homogenization, connectedness, and strong ellipticity} \label{sec:ishom}

In this section we investigate the influence of the geometry of the composite material on the strong ellipticity of the homogenized stiffness tensor.
We only treat the case of dimension $d=2$
and give an example of a class of mixtures for which we can prove that there is homogenization ($\Lambda\ge 0$), and for which the associated homogenized stiffness tensor $\LL_*$ loses ellipticity essentially only for a laminate structure.
Throughout this article, we 
let $\lambda_1,\mu_1$ and $\lambda_2,\mu_2$ be the
Lam\'e coefficients of isotropic stiffness tensors $\LL^1$ and $\LL^2$ that satisfy
\begin{equation}
\label{eq:cond}
0<\mu_1=-(\lambda_2+\mu_2)<\mu_2, \lambda_1+\mu_1>0.
\end{equation}
Recall that elasticity tensors are characterized (for $i=1,2$) by their actions as quadratic forms on matrices $A=(a_{ij})_{ij}$,
which for isotropic tensors read
\begin{equation}\label{a+5}
A\cdot{\LL^i}A=(\lambda_i+2\mu_i) (a_{11}^2+a_{22}^2)
+2\lambda_i a_{11}a_{22}+\mu_i (a_{12}+a_{21})^2.
\end{equation}
We also define the average volume fractions of the two phases by $\theta_1 = \expec{\big|\{x\in Q\,|\,\LL(x)=\LL^1\}\big|}$ and $\theta_2=1-\theta_1$. Throughout this paper we assume $\theta_1\in (0,1)$.

We first consider the case of inclusions of the strong phase $\LL^1$ in the weak matrix phase $\LL^2$.
\begin{assumption}
\label{a.non-neg}
Let the random field $\LL:\R^2 \times \Omega \to \{\LL^1,\LL^2\}$ be measurable and stationary, and have the following properties: For almost every realization,
\begin{itemize}
	\item[(A1)] the set $\{x\,|\,\LL(x)= \LL^2\}$ is open and connected (that is, the microstructure is ``inclusion"-like) with a Lipschitz-regular boundary;
	\item[(A2)] there exists a constant $C_1<\infty$ such that ${\rm diam}(I)\leq C_1$ for each connected component $I$ of $\{x\,|\,\LL(x)=\LL^1\}$;
	\item[(A3)] there exists a constant $C_2<\infty$ such that for all $R>0$ large enough and all $u\in L^2(M_R)$ with $\int_{M_R}u=0$ we have
	the Ne\v{c}as inequality
	\begin{equation}\label{e.Neceq}
	\|u\|^2_{L^2(M_R)}\leq C_2 \|\nabla u\|^2_{H^{-1}(M_R)},
	\end{equation}
	where  $Q^1_R=(-R/2-2C_1,R/2+2C_1)^d$ denotes an enlarged cube, and where
	\begin{equation*}
	M_R=Q^1_R\setminus \bigcup_{I_j\cap [-R/2,R/2)^d\neq\emptyset}I_j,
	\end{equation*}
	with $I_j$ denoting the connected components of $\{x\,|\,\LL(x)=\LL^1\}$.
\end{itemize}
\qed
\end{assumption}
\begin{remark}\label{r.onA3}
The Ne\v{c}as inequality \eqref{e.Neceq} holds true for any bounded connected Lipschitz domain and (in contrast to the Poincar\'e inequality) it is invariant under dilation of the domain. In order for \eqref{e.Neceq} to hold on perforated domains like $M_R$, it is enough to have an extension operator $\mathcal{E}_R$ from $L^2(M_R)/\R$ to $L^2(Q^1_R)/\R$ such that $\|\nabla\mathcal{E}_R(u)\|_{H^{-1}(Q^1_R)}\leq C\|\nabla u\|_{H^{-1}(M_R)}$ with $C$ independent of $R$.
We refer the reader to the appendix for sufficient geometric conditions that ensure the validity of  (A3).
\qed
\end{remark}
We now consider the converse situation when $\LL^1$ is the matrix, and $\LL^2$ are the inclusions.
\begin{assumption}\label{b.non-neg}
Let the random field $\LL:\R^2 \times \Omega\to \{\LL^1,\LL^2\}$ be measurable and stationary, and have the following properties: For almost every realization,
\begin{itemize}
	\item[(B1)] the set $\{x\,|\,\LL(x)= \LL^1\}$ is open and connected (that is, the microstructure is ``inclusion"-like) with a Lipschitz-regular boundary;
	\item[(B2)] there exists a constant $C_1<\infty$ such that ${\rm diam}(I)\leq C_1$ for each connected component $I$ of $\{x\,|\,\LL(x)=\LL^2\}$;
	\item[(B3)] the boundary of the set $\{x\,|\,\LL(x)= \LL^1\}$ contains a flat segment or an arc of circle.
%	\item[(B3)] there exists a constant $C_2<\infty$ such that for all $R>0$ large enough and all $u\in L^2(M_R)$ with $\int_{M_R}u=0$ we have the Ne\v{c}as inequality
%	\begin{equation}\label{e.Neceq-2}
%	\|u\|^2_{L^2(M_R)}\leq C_2 \|\nabla u\|^2_{H^{-1}(M_R)},
%	\end{equation}
%	where  $Q^1_R=(-R/2-2C_1,R/2+2C_1)^d$ denotes an enlarged cube, and where
%	\begin{equation*}
%	M_R=Q^1_R\setminus \bigcup_{I_j\cap [-R/2,R/2)^d\neq\emptyset}I_j,
%	\end{equation*}
%	with $I_j$ denoting the connected components of $\{x\,|\,\LL(x)=\LL^2\}$.
\end{itemize}
\qed
\end{assumption}
Finally, we consider the situation when neither $\LL^1$ nor $\LL^2$ are connected. Our assumptions then read:
\begin{assumption}\label{c.non-neg}
Let the random field $\LL:\R^2 \times \Omega\to \{\LL^1,\LL^2\}$ be measurable and stationary, and have the following properties: For almost every realization,
\begin{itemize}
	\item[(C1)] the set $\{x\,|\,\LL(x)= \LL^1\}$ is open and not connected, with a Lipschitz-regular boundary;
	\item[(C2)] the set $\{x\,|\,\LL(x)= \LL^2\}$ is open and not connected, with a Lipschitz-regular boundary;
\end{itemize}
and one of the following conditions holds:
\begin{itemize}
\item[(C3-a)] there exists a connected components of $\{x\,|\,\LL(x)= \LL^1\}$ which is not an infinite stripe and which contains a flat segment in its boundary;
\item[(C3-b)] all the connected components of $\{x\,|\,\LL(x)= \LL^1\}$  and $\{x\,|\,\LL(x)= \LL^2\}$ are parallel bands
orthogonal to $e_1$.
\end{itemize}
\qed
\end{assumption}
We start our analysis with an elementary observation.
\begin{proposition}\label{p.LAMBDA}
If $\LL: \R^2 \times \Omega\to \{\LL^1,\LL^2\}$ is measurable and stationary, then $\Lambda\geq 0$.
In particular, by \rref{BraidesBriane}, homogenization holds, and $\LL_*$ is well-defined. We denote by $\Lambda_*\ge 0$ the best ellipticity constant
for $\LL_*$.
\qed	
\end{proposition}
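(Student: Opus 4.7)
The plan is to establish the pointwise-in-$\omega$ statement: for every $u\in C^\infty_0(\R^2,\R^2)$ and almost every realization,
$$\int_{\R^2}\nabla u\cdot\LL(x,\omega)\nabla u\,dx\ge 0,$$
from which $\Lambda\ge 0$ follows by taking expectation. The argument is purely algebraic and crucially exploits the balance \eqref{eq:cond} between the Lam\'e coefficients.

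I would start from the identity, valid for any isotropic tensor with Lam\'e parameters $(\lambda,\mu)$ and any matrix $A$ (obtained by splitting $A^{\mathrm{sym}}$ into spherical and deviatoric parts and using $|\Id|^2=2$ in dimension~$2$),
$$A\cdot\LL A=(\lambda+\mu)(\tr A)^2+2\mu\bigl|A^{\mathrm{sym}}-\tfrac{\tr A}{2}\Id\bigr|^2.$$
Applied pointwise to $A=\nabla u$ and integrated, this yields
$$\int\nabla u\cdot\LL\nabla u=\int(\lambda+\mu)(x)(\div u)^2+2\int\mu(x)|e^{\mathrm{dev}}(u)|^2,$$
with $e(u):=\tfrac12(\nabla u+\nabla u^T)$ and $e^{\mathrm{dev}}(u):=e(u)-\tfrac12(\div u)\Id$. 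Two elementary facts for $u\in C^\infty_0(\R^2,\R^2)$ will drive the argument: $(i)$ pointwise $|e^{\mathrm{dev}}(u)|^2=\tfrac12|\nabla u|^2-\det\nabla u$, so that $\int|e^{\mathrm{dev}}(u)|^2=\tfrac12\int|\nabla u|^2$ by the null-Lagrangian property of $\det\nabla u$ in $2$d; and $(ii)$ $\int(\div u)^2\le\int|\nabla u|^2$, which follows from the integration-by-parts identity $\int|\nabla u|^2=\int(\div u)^2+\int(\partial_1 u_2-\partial_2 u_1)^2$.

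Next I would split $\LL=\LL^2+\chi(\LL^1-\LL^2)$, where $\chi$ is the characteristic function of $\{\LL=\LL^1\}$, and compute the two pieces separately. Using $\lambda_2+\mu_2=-\mu_1$ together with $(i)$ and $(ii)$,
$$\int\nabla u\cdot\LL^2\nabla u=-\mu_1\int(\div u)^2+\mu_2\int|\nabla u|^2\ge(\mu_2-\mu_1)\int|\nabla u|^2,$$
while, noting that $(\lambda_1-\lambda_2)+(\mu_1-\mu_2)=\lambda_1+2\mu_1$,
$$\int\chi\,\nabla u\cdot(\LL^1-\LL^2)\nabla u=(\lambda_1+2\mu_1)\int\chi(\div u)^2-2(\mu_2-\mu_1)\int\chi|e^{\mathrm{dev}}(u)|^2.$$
Bounding the (nonpositive) last integral trivially by $2(\mu_2-\mu_1)\int|e^{\mathrm{dev}}(u)|^2=(\mu_2-\mu_1)\int|\nabla u|^2$, the slack from $\LL^2$ exactly cancels that cost, leaving
$$\int\nabla u\cdot\LL\nabla u\ge(\lambda_1+2\mu_1)\int\chi(\div u)^2\ge 0$$
by \eqref{eq:cond}. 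This is realization-wise and uniform in $u$, hence $\Lambda\ge 0$.

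I do not anticipate any real obstacle: the proof is a careful unfolding of the isotropic structure combined with the specific constraint \eqref{eq:cond}, which is consistent with the authors' wording \emph{elementary observation}. The only mild subtlety is the use of the null-Lagrangian identity for $\det\nabla u$, which is however immediate for compactly supported test fields.
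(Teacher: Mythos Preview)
Your argument is correct. Both proofs rest on the same two ingredients: the algebraic constraint $\lambda_2+\mu_2=-\mu_1$ from \eqref{eq:cond}, and the null-Lagrangian property $\int_{\R^2}\det\nabla u=0$ for compactly supported $u$. The difference lies in the choice of reference tensor in the decomposition. You split $\LL=\LL^2+\chi(\LL^1-\LL^2)$ and work at the level of integrals, using the spherical/deviatoric decomposition of isotropic tensors; the paper instead subtracts an auxiliary isotropic tensor $\underline\LL$ with Lam\'e constants $(\underline\lambda,\underline\mu)=(-2\mu_1,\mu_1)$, chosen so that $A\cdot\underline\LL A=-4\mu_1\det A+\mu_1(a_{12}-a_{21})^2$, and obtains the \emph{pointwise} lower bound \eqref{e.lowerbound}.

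Your route is arguably more direct for the bare statement $\Lambda\ge0$, and your final inequality $\int\nabla u\cdot\LL\nabla u\ge(\lambda_1+2\mu_1)\int\chi(\div u)^2$ is clean. The paper's choice, however, is not incidental: the pointwise bound \eqref{e.lowerbound} isolates, on each phase separately, the quadratic forms $(a_{11}\pm a_{22})^2$ and $(a_{12}\pm a_{21})^2$, and this is precisely what drives the later analysis (Lemma~\ref{l.limiteq} and the proofs of Propositions~\ref{p.lambdaA}--\ref{p.lambdaC}), where one needs to extract the PDEs \eqref{eq:PDE1}--\eqref{eq:PDE4} on each phase. Your estimate discards the $(1-\chi)$ contribution entirely and so would not suffice for those subsequent arguments, but for the present proposition it is perfectly adequate.
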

We first treat geometries of type~\ref{a.non-neg}.
The following result is a strict extension of \cite[Theorem~2.9]{Briane-Francfort-15}  which relaxes most of the geometric assumptions on $\LL$ .
\begin{proposition}\label{p.lambdaA}
Assume that  the random field $\LL: \R^2 \times \Omega\to \{\LL^1,\LL^2\}$ satisfies Assumption~\ref{a.non-neg}.
Then $\Lambda_6\ge \Lambda_4>0$.
\qed
\end{proposition}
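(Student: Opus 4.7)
Since $\Lambda_6\geq \Lambda_4$ is immediate from the definitions by taking $a\otimes b=0$ in the infimum defining $\Lambda_4$, the real content of the statement is $\Lambda_4>0$. My plan adapts the periodic strategy of \cite{Briane-Francfort-15} to the stationary setting. The starting point is the pointwise 2D algebraic identity, valid for $j=1,2$:
\[
A\cdot \LL^j A \;=\; 2\mu_j|\mathcal{E}(A)|^2+(\lambda_j+\mu_j)(\tr A)^2,
\]
where $\mathcal{E}(A):=\tfrac12(A+A^{\T})-\tfrac12(\tr A)I$ is the trace-free symmetric part of $A$. In view of $\lambda_2+\mu_2=-\mu_1$, $\lambda_1+\mu_1>0$, $\mu_2>\mu_1>0$, and $\chi_1+\chi_2=1$, this yields the pointwise lower bound
\[
\nabla u\cdot \LL\nabla u\;\ge\; 2\mu_1|\mathcal{E}(u)|^2 + \chi_1(\lambda_1+\mu_1)(\div u)^2 - \chi_2\mu_1(\div u)^2,
\]
which isolates all the negativity into the last, weak-phase term.

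Next, I exploit the null-Lagrangian character of $\det\nabla u$: since $a\otimes b$ has vanishing determinant and since for stationary $v$ one has $\expec\int_Q\det\nabla v=0$ (by the classical identity $\expec[\partial_1v_1\,\partial_2v_2]=\expec[\partial_2v_1\,\partial_1v_2]$ obtained from $\Z^2$-stationarity and integration by parts), the 2D relation $|A|^2=2|\mathcal{E}(A)|^2+2\det A$ gives $\expec\int_Q|\nabla u|^2=2\,\expec\int_Q|\mathcal{E}(u)|^2$. Therefore $\Lambda_4>0$ is equivalent to an absorption bound of the form
\[
\expec\int_Q \chi_2(\div u)^2\;\le\; (1-\eta)\,\expec\int_Q |\nabla u|^2 + K\,\expec\int_Q \chi_1(\div u)^2
\]
for some $\eta>0$ and $K<(\lambda_1+\mu_1)/\mu_1$.

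This absorption step is the crux and is where Assumption~\ref{a.non-neg} enters, via the Ne\v cas inequality~\eqref{e.Neceq}. I localize at scale $R\gg 1$ on the perforated domain $M_R$, subtract the spatial mean $c_R:=\fint_{M_R}\div u$, and apply (A3) to $p:=\div u-c_R$ to obtain $\|p\|_{L^2(M_R)}^2\le C_2\,\|\nabla\div u\|_{H^{-1}(M_R)}^2$. The key algebraic relation $\nabla\div u=2\,\div(\mathcal{E}(u)-s(u))$ (with $s(u)$ the skew part of $\nabla u$), combined with a duality argument against $\phi\in H^1_0(M_R)$, bounds the $H^{-1}$-norm by $\|\mathcal{E}(u)\|_{L^2(M_R)}+\|s(u)\|_{L^2(M_R)}$; the skew contribution is then handled through the stationary balance $\expec\int(\mathrm{curl}\,u)^2+\expec\int(\div u)^2=\expec\int|\nabla u|^2$ (another consequence of $\expec\int\det=0$), closing the linear system. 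Letting $R\to\infty$ transfers the space averages on $M_R$ into $\P$-expectations via the ergodic theorem, and the bounded-diameter assumption (A2) together with Korn's inequality on each inclusion lets one absorb the inclusion boundary contributions into the favourable term $\expec\int\chi_1(\div u)^2$. The main obstacle is precisely this Ne\v cas step: one must show that \eqref{e.Neceq} produces an effective quantitative gain over the trivial pointwise estimate $\chi_2(\div u)^2\le 2|e(u)|^2\le 2|\nabla u|^2$, and that this gain is preserved in the random, non-periodic setting through the passage to the limit $R\to\infty$.
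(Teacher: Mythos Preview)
Your direct quantitative approach has a genuine gap, and the paper proceeds quite differently.

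\textbf{Where the sketch breaks.} The absorption bound you aim for would need the Ne\v{c}as constant $C_2$ to be effectively small, but $C_2$ is geometry-dependent and can be arbitrarily large. Concretely, your identity $\nabla\div u=2\,\div(\mathcal{E}(u)-s(u))$ introduces the skew part $s(u)$, whose $L^2$-norm on the weak phase $M_R$ equals $\tfrac12\int_{M_R}(\mathrm{curl}\,u)^2$. This quantity is \emph{not} controlled by the energy on the weak phase (the lower bound~\eqref{e.lowerbound} gives you $(\partial_1u^1-\partial_2u^2)^2+(\partial_2u^1)^2+(\partial_1u^2)^2$ there, not the curl). Your ``stationary balance'' only yields $\expec\int(\mathrm{curl}\,u)^2\le\expec\int|\nabla u|^2$, and chasing the constants one finds a coefficient in front of $\expec\int|\nabla u|^2$ that tends to $2$ (not below $1$) as $C_2\to\infty$. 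A sharper decomposition (e.g.\ $\partial_1\div u=\partial_1(\partial_1u^1-\partial_2u^2)+2\partial_2(\partial_1u^2)$) avoids the curl but still leaves the mean term $c_R=\fint_{M_R}\div u$. Since $\expec\int_Q\chi_2\div v$ need not vanish for stationary $v$, this mean is controlled only by $|a\cdot b|$ and $\|\chi_1\div u\|$ with constants depending on $\theta_2^{-1}$, which again do not close the loop quantitatively.

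\textbf{What the paper does instead.} The proof is by contradiction and hinges on an ingredient your sketch entirely omits: \lref{weaktozero}. First one shows $\Lambda_6>0$. Assuming $\Lambda_6=0$, a normalized minimizing sequence $\nabla v_n$ has, by \lref{limiteq}, a weak limit $\nabla v=\nabla^2\psi$ solving the \emph{overdetermined} problem~\eqref{eq:THEPDE}. Connectedness of the weak phase (A1) forces $\psi$ to be a single quadratic $a(y_1^2+y_2^2)+\ldots$ on $\{\LL=\LL^2\}$; harmonicity plus Dirichlet uniqueness on each bounded inclusion then extends this to all of $\R^2$, so $\nabla^2\psi=2a\,\Id$ and ergodicity gives $a=0$, hence $\nabla v_n\rightharpoonup 0$. \emph{Only then} is Ne\v{c}as invoked---applied componentwise to $\partial_1v_n^1$ and $\partial_2v_n^2$ with the decomposition $\partial_1^2v_n^1=\partial_1(\partial_1v_n^1-\partial_2v_n^2)+\partial_2(\partial_1v_n^2)$---to upgrade weak to strong convergence; the mean term $(\fint_{M_R}\partial_1v_n^1)^2$ vanishes precisely because of the already-established weak convergence. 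The step $\Lambda_4>0$ then follows softly: if $\Lambda_4=0$, boundedness of a minimizing sequence (from $\Lambda_6>0$) and \lref{weaktozero} force $M=0$.

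In short, the Ne\v{c}as inequality alone does not carry the argument; the rigidity of the overdetermined PDE~\eqref{eq:THEPDE} under Assumption~\ref{a.non-neg} is the missing key idea.
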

The combination of  \pref{lambdaA} and \tref{alaGMT} yields the first main result of this article:
\begin{corollary}\label{c.hom-homA}
Assume that the random field $\LL: \R^2 \times \Omega\to \{\LL^1,\LL^2\}$ satisfies Assumption~\ref{a.non-neg}.
Then  the homogenized stiffness tensor $\LL_*$ is \emph{strictly} strongly elliptic, i.~e. $\Lambda_*>0$.
\qed
\end{corollary}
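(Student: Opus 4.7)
The plan is to combine the two structural results already in hand. Since the corollary is stated as a direct consequence of \pref{lambdaA} and \tref{alaGMT}, the proof reduces to verifying the hypotheses of the latter and then extracting the final statement on strict strong ellipticity.

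First I would apply \pref{LAMBDA}, which holds for any measurable stationary two-phase mixture $\LL:\R^2\times\Omega\to\{\LL^1,\LL^2\}$ regardless of the microstructure, to obtain $\Lambda\ge 0$. Second I would invoke \pref{lambdaA} under Assumption~\ref{a.non-neg} to obtain $\Lambda_6\ge\Lambda_4>0$. With $\Lambda\ge 0$ and $\Lambda_6>0$ secured, \tref{alaGMT} applies: the functionals $\I_\e$ $\Gamma$-converge almost surely to $\I_*$, and $\LL_*$ is defined by the stationary cell formula. The second half of \tref{alaGMT} then converts the strict positivity $\Lambda_4>0$ into strict strong ellipticity of $\LL_*$, i.~e.\ $\Lambda_*>0$, which is the claim.

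No genuine obstacle arises at the level of the corollary itself. The substantive work --- and the main obstacle in the broader argument --- is concentrated in \pref{lambdaA}, where one must rule out any degenerating rank-one direction for stationary correctors $v \in \Hhr^1_1$. This is where Assumption~\ref{a.non-neg} enters decisively: the Ne\v{c}as inequality \eqref{e.Neceq} on the perforated domain $M_R$, combined with the uniform diameter control (A2) on strong-phase components, allows one to bound the shearing energy from below even against rank-one test fields of the form $a\otimes b+\nabla v$. The corollary merely packages that lower bound, via the abstract stochastic GMT theory of \tref{alaGMT}, into the desired statement about the effective tensor.
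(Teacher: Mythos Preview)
Your proposal is correct and matches the paper's approach exactly: the paper states the corollary as an immediate consequence of \pref{lambdaA} and \tref{alaGMT}, with no separate proof, and you have spelled out precisely the chain of implications (including the preliminary $\Lambda\ge 0$ from \pref{LAMBDA}) needed to invoke \tref{alaGMT}. Your second paragraph accurately identifies where the real work lies, namely in \pref{lambdaA}.
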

We turn now to geometries of type~\ref{b.non-neg}.
The result below, which corresponds to \pref{lambdaA}, is weaker in two respects: first it only covers the case of periodic configurations (rather than stationary 
ergodic), and second our argument only applies to specific shapes of inclusions  satisfying (B3) (we expect this issue is only technical, and the result to be generic).
\begin{proposition}\label{p.lambdaB}
Assume that the random field  $\LL:\R^2\times \Omega \to \{\LL^1,\LL^2\}$ satisfies Assumption~\ref{b.non-neg}.
Then we have the implication $\Lambda_6>0 \implies \Lambda_4>0$.
If in addition, $\LL$ is periodic, then $\Lambda_6>0$ (we do not presently know whether this holds in the random setting too).
\qed
\end{proposition}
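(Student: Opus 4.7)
I argue by contradiction for the first implication: suppose $\Lambda_6>0$ yet $\Lambda_4=0$, and extract a minimizing sequence $Y_n:=a_n\otimes b_n + \nabla v_n$ normalized by $\expec{\int_Q|Y_n|^2}=1$ with energy tending to $0$. Splitting $Y_n = M_n + \nabla V_n$ into its mean $M_n := a_n\otimes b_n$ (note that $\expec{\nabla v_n}=0$ by stationarity) and its zero-mean stationary-gradient part $\nabla V_n$, the hypothesis $\Lambda_6>0$ rules out $|M_n|\to 0$: otherwise the sequence would reduce to a pure zero-mean stationary gradient whose Rayleigh quotient is at least $\Lambda_6>0$. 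Along a subsequence, $M_n\to a\otimes b\neq 0$ and $\nabla V_n\rightharpoonup\nabla V$ weakly. Coercivity of $\nabla W \mapsto \expec{\int_Q\nabla W\cdot\LL\nabla W}$ on zero-mean stationary gradients (from $\Lambda_6>0$) yields weak lower semi-continuity of that portion of the quadratic form, while the $M_n$--$\nabla V_n$ cross term is linear and passes to the limit. Combined with $\Lambda_4\ge\Lambda\ge 0$ from \pref{order} and \pref{LAMBDA}, the limit $Y:=a\otimes b+\nabla V$ realizes $\expec{\int_Q Y\cdot\LL Y}=0$ and solves $\divg(\LL Y)=0$ in the stationary weak sense, with non-trivial mean $a\otimes b$.

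The heart of the argument is a Guti\'errez-type algebraic identity enabled by $\mu_1=-(\lambda_2+\mu_2)$. Writing $Y=\nabla u$ for a local potential $u$, the energy decomposes (in expectation, thanks to the null-Lagrangian identity $\det\nabla u = \tfrac{1}{2}\operatorname{curl}(u_1\nabla u_2 - u_2\nabla u_1)$) as
\[
\expec{\int_Q Y\cdot\LL Y}
\,=\, \mu_1 \expec{\int_Q |\operatorname{curl} u|^2} + (\lambda_1+2\mu_1)\expec{\int_{\{\LL=\LL^1\}}(\divg u)^2} + 2(\mu_2-\mu_1)\expec{\int_{\{\LL=\LL^2\}}|\varepsilon(u)^{\mathrm{dev}}|^2},
\]
with all three coefficients strictly positive and all three integrals non-negative. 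Vanishing forces $\operatorname{curl}u\equiv 0$ globally ($\nabla u$ symmetric everywhere), $\divg u=0$ on the strong phase (so $u_1+iu_2$ is anti-holomorphic there by Cauchy--Riemann), and $\varepsilon(u)^{\mathrm{dev}}=0$ on each weak inclusion (so $\nabla u = c_k I/2$ there, forcing $u(x)=c_k x + d_k$ on $I_k$). Connectedness of the strong phase (B1) then propagates the anti-holomorphic $u$ across the entire matrix by analytic continuation. I bring in (B3): along a flat segment or circular arc $\Gamma\subset \partial\{\LL=\LL^1\}$, the affine trace $c_k x + d_k$ from the weak side supplies Cauchy data for the anti-holomorphic $u$ on the strong side. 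An explicit Schwarz reflection across $\Gamma$ produces the candidate $u$ on the strong phase; evaluating the resulting matching condition on the non-flat (respectively non-arc) portion of $\partial I_k$ — using that the inclusion is not contained in a line or circle — forces $c_k=0$. Propagating via (B1) throughout the entire microstructure collapses $Y\equiv 0$, contradicting $\expec{Y}=a\otimes b\ne 0$.

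For the periodic claim $\Lambda_6>0$, the same identity on a period cell reads
\[
\int_Q \nabla v\cdot\LL\nabla v = \mu_1\int_Q|\operatorname{curl}v|^2 + (\lambda_1+2\mu_1)\!\!\int_{\{\LL=\LL^1\}}\!\!(\divg v)^2 + 2(\mu_2-\mu_1)\!\!\int_{\{\LL=\LL^2\}}\!\!|\varepsilon(v)^{\mathrm{dev}}|^2,
\]
a sum of three non-negative terms with strictly positive coefficients. A normalized minimizing sequence for $\Lambda_6=0$ makes each Guti\'errez term tend to zero, and the same rigidity analysis applies to the weak $H^1$-limit; the Schwarz-reflection argument based on (B3) then forces $\nabla v \equiv 0$ in the limit. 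Strong convergence — needed to contradict $\|\nabla v_n\|_{L^2}=1$ — is obtained by combining Korn's inequality on the connected strong matrix (legitimate in the periodic setting by (B1) and the Lipschitz regularity of the boundary) with a uniform $H^1$-extension across the isolated weak inclusions (afforded by (B2) and Lipschitz regularity) to bound $\|\nabla v_n\|_{L^2(Q)}$ by the three Guti\'errez terms. The main technical obstacle is the overdetermined unique-continuation step under (B3), which requires an explicit Schwarz-type reflection of the anti-holomorphic potential across a flat segment or circular arc and a geometric argument ruling out non-trivial affine matchings on the rest of the inclusion boundary. The reason the random counterpart of $\Lambda_6>0$ is out of reach is precisely the absence of a Poincar\'e inequality for stationary fields, which breaks both the compactness and the extension mechanism available in the periodic case.
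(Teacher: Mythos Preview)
Your overall strategy for the implication $\Lambda_6>0\Rightarrow\Lambda_4>0$ is sound and essentially coincides with the paper's: both extract the weak limit of a normalized minimizing sequence, both use the Guti\'errez-type decomposition (the paper uses the equivalent pointwise lower bound \eqref{e.lowerbound}, which carries the same information as your exact identity), and both derive the overdetermined system $\operatorname{curl}u=0$ everywhere, $\operatorname{div}u=0$ on the strong phase, $\nabla u=c_kI$ on each weak inclusion $I_k$. The paper packages this via a scalar potential $\psi\in H^2_{\loc}$ with $u=\nabla\psi$ and invokes Holmgren's uniqueness theorem with explicit comparison harmonic functions $\psi_S,\psi_C$; your anti-holomorphic/Schwarz-reflection formulation is an equivalent complex-analytic rephrasing. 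One small correction: the overdetermination is not detected on the \emph{same} inclusion $I_k$ but on an \emph{arbitrary} other inclusion $I_j$ (the paper's argument proceeds by matching the explicit $\nabla\psi_S$ or $\nabla\psi_C$ on $\partial I_j$ with the affine data from inside $I_j$, which forces $a_1=a_j=0$ since $I_j$ is open); your ``non-flat portion of $\partial I_k$'' variant also works but needs a line of justification.

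The genuine gap is in your strong-convergence argument for the periodic claim $\Lambda_6>0$. You propose to bound $\|\nabla v_n\|_{L^2(Q)}$ by the three Guti\'errez terms via Korn's inequality on the strong matrix plus an $H^1$-extension across the inclusions. This does not work as stated. Korn controls $\nabla v$ by the \emph{full} symmetric gradient $\varepsilon(v)$, but the three terms only give you $\operatorname{curl}v$ everywhere, $\operatorname{div}v$ on the strong phase, and $\varepsilon(v)^{\mathrm{dev}}$ on the weak phase. You are missing $\varepsilon(v)^{\mathrm{dev}}$ on the strong phase and $\operatorname{div}v$ on the weak phase; no combination of Korn and extension recovers them. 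Concretely, on each weak inclusion $I_j$ the Guti\'errez terms force $\partial_2v^1_n,\partial_1v^2_n,\partial_1v^1_n-\partial_2v^2_n\to 0$ in $L^2(I_j)$, but leave $\partial_1v^1_n+\partial_2v^2_n$ (equivalently, the diagonal entries separately) uncontrolled. The paper closes this by applying Ne\v{c}as' inequality on each bounded Lipschitz inclusion $I_j$ (finitely many per period cell): writing $\partial_1\partial_1v^1_n=\partial_1(\partial_1v^1_n-\partial_2v^2_n)+\partial_2\partial_1v^2_n$ puts $\nabla\partial_1v^1_n$ in $H^{-1}(I_j)$ controlled by the Guti\'errez terms, and the mean $\int_{I_j}\partial_1v^1_n$ vanishes by weak convergence. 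Once $\nabla v_n\to 0$ strongly on the weak phase, the identity $|\nabla v|^2=(\operatorname{div}v)^2+(\operatorname{curl}v)^2-2\det\nabla v$ together with $\int_Q\det\nabla v_n=0$ finishes the strong phase. You should replace the Korn/extension step by this Ne\v{c}as argument.
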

The combination of  \pref{lambdaB} and \tref{alaGMT} yields the second main result of this article:
\begin{corollary}\label{c.hom-homB}
Assume that the random field  $\LL:\R^2\times \Omega \to \{\LL^1,\LL^2\}$ satisfies Assumption~\ref{b.non-neg}
and that $\LL$ is periodic.
Then the homogenized stiffness tensor $\LL_*$ is well-defined, and it is \emph{strictly} strongly elliptic, i.~e. $\Lambda_*>0$.
\qed
\end{corollary}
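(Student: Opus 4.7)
My plan for proving Corollary \ref{c.hom-homB} is to assemble the three earlier results already stated: \pref{LAMBDA} (providing $\Lambda\ge 0$ for any two-phase stationary measurable field), \pref{lambdaB} (which, in the periodic setting and under Assumption \ref{b.non-neg}, delivers both $\Lambda_6>0$ and the implication $\Lambda_6>0\Rightarrow \Lambda_4>0$), and \tref{alaGMT} (which converts the ellipticity inequalities $\Lambda\ge 0$, $\Lambda_6>0$, and $\Lambda_4>0$ into well-posed homogenization and strict strong ellipticity of $\LL_*$). The corollary is really a chaining exercise; all the substantive analysis is done elsewhere.

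Concretely, first, since $\LL$ takes only the two values $\LL^1$ and $\LL^2$ and is measurable and stationary, \pref{LAMBDA} gives $\Lambda\ge 0$. Second, because Assumption \ref{b.non-neg} is satisfied and $\LL$ is in addition periodic, the full conclusion of \pref{lambdaB} applies: the periodic hypothesis yields $\Lambda_6>0$, and the implication contained in the same proposition then promotes this to $\Lambda_4>0$. Third, with $\Lambda\ge 0$ and $\Lambda_6>0$ in hand, \tref{alaGMT} guarantees the almost sure $\Gamma(w\text{-}H^1)$-convergence of $\I_\e$ to $\I_*$ and delivers the asymptotic formula that defines $\LL_*$ unambiguously; adding the inequality $\Lambda_4>0$ from the same theorem upgrades the conclusion to strict strong ellipticity, i.e. $\Lambda_*>0$, which is the claim.

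Because the corollary is a pure assembly of previously stated results, there is no real obstacle within its own proof. The genuine difficulty lies upstream, in establishing \pref{lambdaB}, and more precisely in proving $\Lambda_6>0$ for periodic two-phase microstructures of type B. That quantitative coercivity estimate rests on the solvability analysis of an overdetermined elliptic problem on $\R^2$ which exploits the flat segment or circular arc provided by (B3), and it is the step that cannot presently be carried out in the purely stationary ergodic setting; the argument is postponed to \sref{proofs-ell}.
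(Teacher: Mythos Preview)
Your proof is correct and matches the paper's own approach: the corollary is stated as a direct combination of \pref{lambdaB} and \tref{alaGMT}, with no separate proof given. Your explicit invocation of \pref{LAMBDA} to secure $\Lambda\ge 0$ is a minor (and welcome) addition, since that hypothesis is indeed required by \tref{alaGMT}.
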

\begin{remark}\label{r.onShape} 
If there is only one single connected component in the periodic cell, then the result holds whatever the shape of the inclusion, that is to say, without Assumption (B3).
\qed
\end{remark}
We conclude this section with an extension of the result by Briane and Francfort
when neither of the two phases are connected.
\begin{proposition}\label{p.lambdaC}
Assume that the random field  $\LL:\R^2\times \Omega\to \{\LL^1,\LL^2\}$ satisfies Assumption~\ref{c.non-neg}.
If $\LL$ satisfies (C3-a) then we have the implication $\Lambda_6>0 \implies \Lambda_4>0$, and $\Lambda_6>0$ under the additional assumption that $\LL$ is periodic (we do not presently know whether this holds in the random setting too).
If $\LL$ satisfies (C3-b), then
\begin{itemize}
\item either $\LL$ has volume fractions $\theta_1=\theta_2=\frac12$, in which case $\Lambda_4=0$;
\item or $\LL$ has volume fractions $\theta_1\ne \frac12\ne\theta_2$, in which case we have the implication $\Lambda_6>0 \implies \Lambda_4>0$.
\end{itemize}
\qed
\end{proposition}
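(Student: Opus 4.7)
The proposition splits into three claims: case (C3-a), case (C3-b) with $\theta_1 = \theta_2 = 1/2$, and case (C3-b) with $\theta_1 \ne 1/2$. My plan is to handle (C3-a) by reducing to the argument of \pref{lambdaB}, and (C3-b) by exploiting the one-dimensional (laminate) reduction of the cell problem.

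\emph{Case (C3-a).} I would adapt the proof of \pref{lambdaB} essentially verbatim. That proof establishes $\Lambda_6 > 0 \Rightarrow \Lambda_4 > 0$ by contradiction: assuming $\Lambda_4 = 0$, one extracts via $\Lambda_6 > 0$ and a Korn/Ne\v{c}as-type compactness argument a nonzero limiting displacement $u: \R^2 \to \R^2$ with $\nabla u \cdot \LL \nabla u = 0$ a.e. Since $\LL^1$ is very strongly elliptic, $u$ is rigid on each connected component of the strong phase; matching these rigid fields across interfaces yields the overdetermined elliptic equation \eqref{eq:THEPDE} on the weak phase, whose only solutions are trivial whenever a flat boundary segment or arc of circle is present. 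The crucial observation is that this rigidity-and-overdetermination argument really uses only \emph{one} bounded (or at least non-stripe) strong-phase component whose boundary has a flat piece; the global connectedness imposed in (B1) is never truly invoked. Under (C3-a) such a component is supplied, and the argument carries over unchanged. The companion statement $\Lambda_6 > 0$ in the periodic setting is imported from \pref{lambdaB} for the same reason: its proof relies on compactness in the period cell together with the same overdetermined-PDE rigidity.

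\emph{Case (C3-b).} Here $\LL = \LL(x_1)$ is a laminate with layers orthogonal to $e_1$, so by matching the displacement and the traction $\sigma e_1$ across each interface, the cell problem reduces to a one-dimensional ODE whose solution yields a closed-form Schur-complement expression for $\LL_*$. For $\theta_1 = \theta_2 = 1/2$, the direct computation performed in \cite{Gutierrez-98,Briane-Francfort-15} combined with the Guti\'errez constraint \eqref{eq:cond} produces a rank-one direction $a_0\otimes b_0$ with $(a_0\otimes b_0)\cdot\LL_*(a_0\otimes b_0) = 0$; plugging the corresponding explicit 1D minimizer $v$ of the cell problem as test corrector gives
\begin{equation*}
\expec{\int_Q (a_0 \otimes b_0 + \nabla v)\cdot \LL(a_0 \otimes b_0 + \nabla v)} = 0,
\end{equation*}
and the chain $\Lambda_4 \ge \Lambda \ge 0$ from \pref{LAMBDA} and \pref{order} forces $\Lambda_4 = 0$. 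For $\theta_1 \ne 1/2$, the same closed-form expression for $\LL_*$ is, under \eqref{eq:cond}, strictly positive on every rank-one matrix, so $\LL_*$ is strictly strongly elliptic; granted $\Lambda_6 > 0$, \tref{alaGMT} then yields $\Lambda_4 > 0$ by contraposition.

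\emph{Main obstacle.} The hard part is case (C3-a): without periodicity there is no compactness in the period cell, so the limiting displacement $u$ must be extracted using $\Lambda_6 > 0$ as a quantitative surrogate, and even then the rigid limits on the (possibly infinitely many) components of the strong phase must be threaded together consistently through the weak phase — this is precisely why the second conclusion, $\Lambda_6 > 0$ itself, can presently be established only in the periodic setting. A subtler technical point is showing that the flat-boundary-segment hypothesis is enough to kill the overdetermined problem \eqref{eq:THEPDE} in a \emph{single} distinguished component, even though neighbouring strong components need not be rigidly tied to it; this is the geometric heart of \pref{lambdaB}, and its import into the present setting is the essential reason the conclusion carries over under the weaker Assumption (C3-a).
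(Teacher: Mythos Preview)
Your overall plan --- route (C3-a) through the overdetermined problem \eqref{eq:THEPDE} and treat (C3-b) via the explicit laminate --- matches the paper, but several structural details are inverted or incomplete.

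\textbf{Roles of the phases.} You write that $u$ is rigid on the strong phase and that \eqref{eq:THEPDE} lives on the weak phase. In fact the potential $\psi^M$ (with $\nabla\psi^M=l_M+v$) is \emph{harmonic} on the strong phase $\{\LL=\LL^1\}$ and explicitly quadratic, $\psi^M=a_j(y_1^2+y_2^2)+b_jy_1+c_jy_2+d_j$, on each weak-phase component $I_j$; see \lref{limiteq}. The overdetermination comes from matching $\psi^M$ and $\nabla\psi^M$ across every interface.

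\textbf{(C3-a) is not verbatim (B).} Your claim that ``global connectedness in (B1) is never truly invoked'' is wrong: in Step~3 of \lref{weaktozero}, Holmgren's theorem propagates the explicit form of $\psi$ from the flat segment to the \emph{entire connected} strong phase, and this is what kills all $a_j$ at once. Under (C3-a) the strong phase is disconnected, so the paper runs a genuinely different argument (Step~4 of \lref{weaktozero}): Holmgren is applied on the distinguished strong component $K_1$; the hypothesis that $K_1$ is \emph{not a stripe} then supplies a non-flat boundary piece where the compatibility condition \eqref{e.compcomp} forces $a_{j_1}=0$; finally an iterative path-finding construction propagates $a_j=0$ to every other weak component. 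You correctly flag this ``threading'' as the hard part, but you should not expect the (B) argument to transplant unchanged.

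\textbf{Role of Ne\v{c}as.} The Ne\v{c}as-type compactness is used for proving $\Lambda_6>0$ in the periodic cell (upgrading weak to strong convergence), not for the implication $\Lambda_6>0\Rightarrow\Lambda_4>0$; the latter uses only that $\Lambda_6>0$ bounds the minimizing sequence, then \lref{weaktozero}. For $\Lambda_6>0$ under (C3-a) periodic, an extra truncation via \lref{surgery} is needed because weak-phase components may be unbounded.

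\textbf{(C3-b), $\theta_1=\tfrac12$.} Your ``explicit 1D minimizer $v$'' need not lie in $\Hhr^1_1$ in the random setting: the natural candidate has $\nabla v=(1-2\chi)e_1\otimes e_1$, and the primitive of $1-2\chi$ is not stationary in general. The paper closes this gap by showing that $U=(1-2\chi)e_1\otimes e_1$ is a strong $L^2$-limit of gradients $\nabla v^T$ with $v^T\in\Hhr^1_1$, via a massive-term regularization \eqref{e.weakinprob} and a div-curl argument. For $\theta_1\ne\tfrac12$, your route through the closed-form laminate $\LL_*$ plus contraposition in \tref{alaGMT} is valid, though the paper instead invokes \lref{weaktozero} directly.
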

Under assumption (C3-b) we do not know whether $\Lambda_6>0$ in the random setting. However, when $\theta_1\ne \frac12$ we can directly rule out the loss of ellipticity, which has to be understood in the sense of \rref{BraidesBriane}. This is part of the last main result of this article:
\begin{corollary}\label{c.hom-homC}
Assume that the random field  $\LL:\R^2\times \Omega \to \{\LL^1,\LL^2\}$ satisfies Assumption~\ref{c.non-neg}.
If $\LL$ satisfies (C3-a) and is periodic, then  $\LL_*$ is \emph{strictly} strongly elliptic, i.~e. $\Lambda_*>0$.
If $\LL$ satisfies (C3-b), then 
\begin{itemize}
\item either $\LL$ has volume fractions $\theta_1=\theta_2=\frac12$, in which case $\LL_*$ is \emph{not strictly} strongly elliptic, i.~e. $\Lambda_*=0$;
\item or $\LL$ has volume fractions $\theta_1\ne\frac12\ne\theta_2$, in which case $\LL_*$ is  \emph{strictly} strongly elliptic, i.~e. $\Lambda_*>0$.
\end{itemize}
\qed
\end{corollary}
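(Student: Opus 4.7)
The plan is to treat the three alternatives of the corollary separately and combine Propositions \ref{p.LAMBDA} and \ref{p.lambdaC} with either Theorem \ref{t.alaGMT} or with an explicit laminate computation when Theorem \ref{t.alaGMT} is not applicable.

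\emph{Case (C3-a), $\LL$ periodic.} Proposition \ref{p.LAMBDA} gives $\Lambda \geq 0$. The periodic part of Proposition \ref{p.lambdaC} yields $\Lambda_6 > 0$, and the implication $\Lambda_6 > 0 \Rightarrow \Lambda_4 > 0$ from the same proposition then gives $\Lambda_4 > 0$. Theorem \ref{t.alaGMT} concludes that $\LL_*$ is strictly strongly elliptic, i.e.~$\Lambda_* > 0$.

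\emph{Case (C3-b), $\theta_1 = \theta_2 = \tfrac12$.} Proposition \ref{p.LAMBDA} still gives $\Lambda \geq 0$, so by Remark \ref{r.BraidesBriane} homogenization holds and $\LL_*$ is characterized by the cell formula appearing in Theorem \ref{t.alaGMT}. Proposition \ref{p.lambdaC} yields $\Lambda_4 = 0$. To deduce $\Lambda_* = 0$, I would extract from the infimum defining $\Lambda_4$ a minimizing sequence $(a_n \otimes b_n, v_n)$, normalize so that $|a_n \otimes b_n| = 1$, and exploit stationarity (which gives $\expec{\nabla v_n(0,\cdot)} = 0$ and hence the orthogonality $\expec{\int_Q |a_n \otimes b_n + \nabla v_n|^2} \ge 1$) to force the numerator of the $\Lambda_4$ ratio to tend to $0$. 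After passing to a subsequential limit $M = a \otimes b$ with $|M| = 1$, plugging $M$ into the cell formula would yield $M \cdot \LL_* M = 0$, hence $\Lambda_* = 0$.

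\emph{Case (C3-b), $\theta_1 \ne \tfrac12$.} Under (C3-b) the microstructure is a (random) laminate orthogonal to $e_1$, so $\LL(\cdot,\omega)$ is a function of $x_1$ alone. The cell formula for $\LL_*$ then reduces to a one-dimensional problem, and by the ergodic theorem $\LL_*$ depends only on the volume fractions $\theta_1,\theta_2$, so that matters reduce to the periodic laminate with the same fractions. The explicit computations of \cite{Gutierrez-98,Briane-Francfort-15} then give that $\LL_*$ is strictly strongly elliptic as soon as $\theta_1 \ne \tfrac12$, whence $\Lambda_* > 0$.

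The main obstacle is the step $\Lambda_4 = 0 \Rightarrow \Lambda_* = 0$ in the case (C3-b), $\theta_1 = \tfrac12$: since $\Lambda_6 > 0$ is not established in the stationary setting, Theorem \ref{t.alaGMT} does not apply directly, and we cannot a priori exclude that $\expec{\int_Q |\nabla v_n|^2}$ blows up along a minimizing sequence of the $\Lambda_4$ ratio --- a manifestation of the $\Lambda_4$ versus $\Lambda_5$ discrepancy noted in Remark \ref{r.Lambda45}. A careful scaling and perturbation argument will be needed to prevent the limiting rank-one matrix $M$ from degenerating to zero, and to pass to the limit in the cell formula despite the change of base point from $a_n \otimes b_n$ to $a \otimes b$.
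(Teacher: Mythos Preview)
Your treatment of the (C3-a) periodic case matches the paper exactly.

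For (C3-b) with $\theta_1=\tfrac12$, the obstacle you identify is real and your proposed route through the abstract implication $\Lambda_4=0\Rightarrow\Lambda_*=0$ does not close: without $\Lambda_6>0$ you cannot rule out that every minimizing sequence for $\Lambda_4$ has $a_n\otimes b_n\to 0$ after normalizing the full denominator, and normalizing $|a_n\otimes b_n|=1$ instead lets $\nabla v_n$ blow up, so the passage to the limit in the cell formula fails. The paper avoids this entirely by observing that the proof of $\Lambda_4=0$ in Proposition~\ref{p.lambdaC} is \emph{constructive}: it exhibits the specific matrix $M=e_2\otimes e_2$ and a stationary field $U=(1-2\chi)e_1\otimes e_1$ with $\expec{\int_Q(M+U)\cdot\LL(M+U)}=0$, and then shows $U$ is the \emph{strong} $L^2$-limit of gradients $\nabla v^T\in\Hhr^1_1$. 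Plugging this sequence directly into the cell formula gives $e_2\otimes e_2\cdot\LL_* e_2\otimes e_2\le 0$, hence $=0$ by $\Lambda\ge0$. No compactness argument is needed.

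For (C3-b) with $\theta_1\ne\tfrac12$, your reduction to the periodic laminate via the volume-fraction dependence of $\LL_*$ is plausible (laminate formulas are explicit harmonic/arithmetic averages), but it relies on external computations and on justifying that one-dimensional correctors are optimal in the random cell formula. The paper instead gives a self-contained argument: assume $M\cdot\LL_*M=0$ for some rank-one $M$, take a minimizing sequence $v_n$, and average $\nabla v_n$ in the $x_2$-variable over $[-k,k]$. By convexity of the energy density (after adding the null Lagrangian $4\mu_1\det$) and the laminate structure $\LL=\LL(x_1)$, this lowers the energy; passing to the weak limit in $k$ kills the second column of the averaged field, and the pointwise lower bound~\eqref{e.lowerbound} then forces $M=0$ componentwise, using $\theta_1\ne\tfrac12$ only at the very end. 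This approach stays within the paper's framework and avoids invoking the explicit Guti\'errez computation.
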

Figure \ref{f.fig211} below is a summary of the main results.
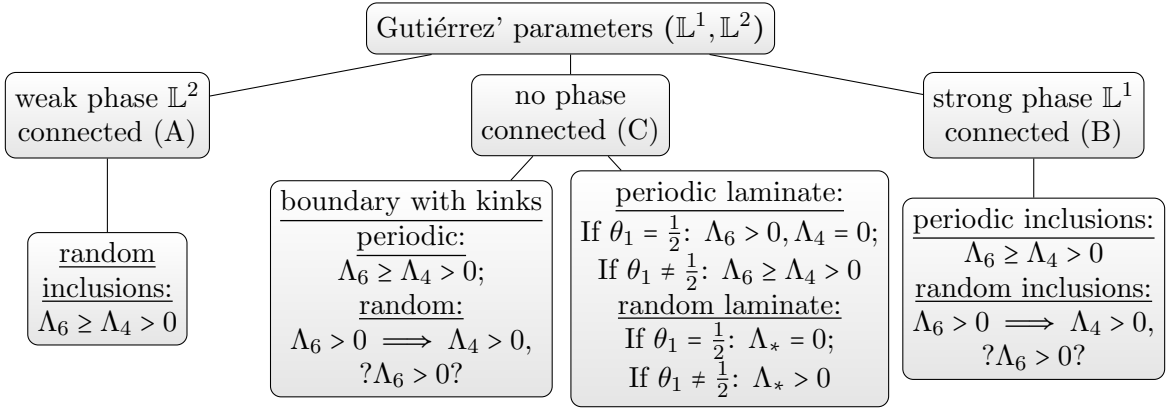
\begin{figure}[h]
\centering
\begin{tikzpicture}[level 1/.style={sibling distance=16em, level distance=3em},level 2/.style={sibling distance=11em, level distance=6em},
every node/.style = {shape=rectangle, rounded corners,
	draw, align=center,
	top color=white, bottom color=gray!20}]]
\node {Guti\'errez' parameters $(\LL^1,\LL^2)$}
child { node {weak phase $\LL^2$ \\ connected (A)} 
	child { node {\underline{random} \\ \underline{inclusions:} \\ $\Lambda_6\ge \Lambda_4>0$}}}
child { node {no phase \\ connected (C)}
	child { node {\underline{boundary with kinks} \\ \underline{periodic:} \\ $\Lambda_6\ge \Lambda_4>0$; \\ \underline{random:} \\ $\Lambda_6>0 \implies \Lambda_4>0$, \\  ?$\Lambda_6>0$?}}
	child { node {\underline{periodic laminate:} \\ If $\theta_1=\frac12$:   $\Lambda_6>0,\Lambda_4=0$; \\ If $\theta_1\ne \frac12$: 
	  $\Lambda_6\ge \Lambda_4>0$  \\ {\underline{random laminate:}} \\ If $\theta_1=\frac12$:  $\Lambda_*=0 $; \\ If $\theta_1\ne \frac12$: 
	  $\Lambda_*>0$  }}}
child { node {strong phase $\LL^1$ \\ connected (B)}
	child { node {\underline{periodic inclusions:} \\  $\Lambda_6\ge \Lambda_4>0$ 
	\\ \underline{random inclusions:} \\  $\Lambda_6>0 \implies \Lambda_4>0$, \\  ?$\Lambda_6>0$?}}};
\end{tikzpicture}
\caption{Ellipticity constants $\Lambda_6$, $\Lambda_4$, and $\Lambda_*$ of the homogenized tensor $\LL_*$ for two-phase composites $(\LL^1,\LL^2)$
with volume fractions $\theta_1>0$ and $\theta_2=1-\theta_1>0$.}
\label{f.fig211}
\end{figure}

\section{Proofs: Stationary GMT theory}\label{sec:GMT}

\subsection{Proofs of \lref{Lambda123} and \pref{order}}

We split the proof into seven steps.

\medskip

\noindent \textit{Step~1.} Reformulations.

\noindent Since $\LL$ is a symmetric real-valued 4-tensor, we have the following equivalent definitions for $\Lambda, \Lambda_4,\Lambda_6$ using complex-valued fields:
\begin{eqnarray*}
\Lambda&=&\expec{\inf_{u\in C^\infty_0(\R^d,\C^d)}\frac{\int_{\R^d}\bar{\nabla u}\cdot \LL\nabla u}{\int_{\R^d}|\nabla u|^2}},
\\
\Lambda_4&=& \inf\left\{ \frac{\expec{\int_Q (\bar{a\otimes b+\nabla v}) \cdot \LL (a\otimes b+\nabla v)}}{\expec{\int_Q |a\otimes b +\nabla v |^2}}\,\bigg|\, a\in \C^d,b\in \R^d,v\in \Hh^1_1\right\},\\
\Lambda_6&=& \inf\left\{ \frac{\expec{\int_Q \bar{\nabla v} \cdot \LL \nabla v}}{\expec{\int_Q |\nabla v |^2}}\,\bigg|\,v\in \Hh^1_1\right\}.
\end{eqnarray*}
The inequalities $\Lambda_6\ge \Lambda_4$ and $\Lambda_5\ge \Lambda_1$ are obvious. It is therefore enough to prove that
$\Lambda= \Lambda_1= \Lambda_2= \Lambda_3$ on the one hand, and $\Lambda_4\ge \Lambda_5$ on the other hand.

\medskip

\noindent \textit{Step~2.} Stochastic Bloch wave transform.

\noindent For all $v \in L^2(\Omega,C^\infty_{0}(\R^d,\R^d))$ such that $v$ has support in some fixed domain $B_R$ almost surely, we define the Bloch transform of $v$ as follows: for all $\gamma \in [0,2\pi)^d$,
$\tilde v_\gamma:\Omega\times \R^d\to \C^d$ is given by
\begin{equation*}
\tilde v_\gamma(x,\omega)\,:=\, \sum_{z\in \Z^d} e^{-i\gamma\cdot z}v(x+z,T_{-z}\omega),
\end{equation*}
where the sum is finite since $x\mapsto v(x,\omega)$ has support in $B_R$ almost surely.
The interest of the Bloch transform is that it maps fields with compact support onto stationary fields (up to a phase).
Indeed, for all $x\in \R^d$, $y\in \Z^d$, $\gamma \in [0,2\pi)^d$, and almost every $\omega\in \Omega$, we have
using the group property of $\{T_z\}_{z\in \Z^d}$ 
\begin{eqnarray*}
\tilde v_\gamma(x+y,\omega)&=&\sum_{z\in \Z^d} e^{-i\gamma\cdot z}v(x+y+z,T_{-z}\omega) \\
&=&e^{i\gamma\cdot y}\sum_{z\in \Z^d} e^{-i\gamma\cdot (y+z)}v(x+y+z,T_{-(y+z)}T_y \omega) \\
&=&e^{i\gamma\cdot y}\tilde v_\gamma(x,T_y \omega),
\end{eqnarray*}
so that $\hat v_\gamma :(x,\omega) \mapsto e^{-i\gamma \cdot x}\tilde v_\gamma(x,\omega)$ is a stationary field.

\medskip

\noindent \textit{Step~3.} Proof of $\Lambda\ge \Lambda_1$.

\noindent
As shown in Appendix~\ref{append:selection}, $\inf_{C^\infty_0(\R^d,\R^d)} \frac{\int_{\R^d} \nabla v\cdot \LL \nabla v}{\int_{\R^d} |\nabla v|^2}$
is measurable on $(\Omega,\mathcal F)$, and therefore constant by stationarity.
For all $L>0$, denote by $C^\infty_{0,L}(\R^d,\R^d)$ the subset of compactly supported smooth functions the support of which 
is contained in the ball $B_L$ centered at the origin and of radius $L$. 
As above, for all $L\ge 0$, $\inf_{C^\infty_{0,L}(\R^d,\R^d)} \frac{\int_{\R^d} \nabla v\cdot \LL \nabla v}{\int_{\R^d} |\nabla v|^2}$
is measurable on $(\Omega,\mathcal F)$, and the monotone convergence theorem implies that for all $\delta>0$ there exists $L_\delta\ge 0$ such that
$$
\expec{\inf_{C^\infty_{0,L_\delta}(\R^d)} \frac{\int_{\R^d} \nabla v\cdot \LL \nabla v}{\int_{\R^d} |\nabla v|^2}} \le \expec{\inf_{C^\infty_{0}(\R^d)} \frac{\int_{\R^d} \nabla v\cdot \LL \nabla v}{\int_{\R^d} |\nabla v|^2}}+\frac\delta2=\Lambda+\frac\delta 2.
$$
From the measurable selection argument of Appendix~\ref{append:selection}, we thus deduce the existence of measurable
quasi-minimizers $v_\delta \in L^2(\Omega,C^\infty_{0,L_\delta}(\R^d,\R^d))$ such that 
 almost surely $\int_{\R^d} |\nabla v_\delta|^2=1$ (by scaling) and 
\begin{equation}\label{e.meas-selection}
\expec{\int_{\R^d}\bar{\nabla v_\delta}\cdot \LL \nabla v_\delta}\leq \Lambda+\delta.
\end{equation}
For all $\gamma \in [0,2\pi)^d$, we consider the Bloch transform $\tilde v_{\delta,\gamma}$ of $v_{\delta}$ (the support of which is deterministic).
Let $Q^*=[0,2\pi)^d$.
We then have for almost every $\omega\in \Omega$:
\begin{eqnarray*}
&&{\int_Q \int_{Q^*} \bar{\nabla \tilde v_{\delta,\gamma}(x,\omega)}\cdot \LL(x,\omega) \nabla \tilde v_{\delta,\gamma}(x,\omega)d\gamma dx} \\
&=&\int_Q \sum_{z\in \Z^d}\sum_{z'\in \Z^d}\int_{Q^*} e^{-i\gamma\cdot(z'-z)} \bar{\nabla  v_{\delta}(x+z,T_{-z}\omega)}\cdot \LL(x,\omega) \nabla v_{\delta}(x+z',T_{-z'}\omega)d\gamma dx \\
&=&(2\pi)^d \int_Q \sum_{z\in \Z^d} \bar{\nabla v_{\delta}(x+z,T_{-z}\omega)}\cdot \LL(x,\omega) \nabla  v_{\delta}(x+z,T_{-z}\omega)dx\\
&=&(2\pi)^d \int_Q \sum_{z\in \Z^d} \bar{\nabla v_{\delta}(x+z,T_{-z}\omega)}\cdot \LL(x+z,T_{-z}\omega) \nabla  v_{\delta}(x+z,T_{-z}\omega)dx,
\end{eqnarray*}
using that $\int_{Q^*}e^{-i\gamma\cdot(z'-z)}d\gamma =(2\pi)^d \delta_{zz'}$ (the Kronecker symbol), and the stationarity of $\LL$.
Since the translation group is measure preserving, the expectation of this identity turns into
\begin{eqnarray}
\lefteqn{\expec{\int_Q \int_{Q^*} \bar{\nabla \tilde v_{\delta,\gamma}(x,\omega)}\cdot \LL(x,\omega) \nabla \tilde v_{\delta,\gamma}(x,\omega)d\gamma dx}} \nonumber \\
&=& (2\pi)^d \expec{\int_Q \sum_{z\in \Z^d} \bar{\nabla v_{\delta}(x+z,\omega)}\cdot \LL(x+z,\omega) \nabla  v_{\delta}(x+z,\omega)dx} \nonumber \\
&=&(2\pi)^d \expec{\int_{\R^d}\bar{\nabla v_{\delta}}\cdot \LL \nabla  v_{\delta}}.
\label{eq:order-3}
\end{eqnarray}
Likewise, 
\begin{equation}\label{eq:order-3-00}
\expec{\int_Q \int_{Q^*} |\nabla \tilde v_{\delta,\gamma}(x,\omega)|^2d\gamma dx}\,=\,(2\pi)^d \expec{\int_{\R^d}|\nabla v_{\delta}|^2} =(2\pi)^d.
\end{equation}
Since for all $\gamma \in [0,2\pi)^d$, $\tilde v_{\delta,\gamma}(x,\omega)=e^{i\gamma\cdot x}\hat v_{\delta,\gamma}(x,\omega)$ with $\hat v_{\delta,\gamma}\in \Hh^1_1$, by definition of $\Lambda_1$
we have
\begin{equation*}
\int_{Q^*} \expec{\int_Q \bar{\nabla \tilde v_{\delta,\gamma}(x,\omega)}\cdot \LL(x,\omega) \nabla \tilde v_{\delta,\gamma}(x,\omega) dx}d\gamma
\,
\geq \,\Lambda_1 \int_{Q^*}  \expec{\int_Q |\nabla \tilde v_{\delta,\gamma}(x,\omega)|^2dx}d\gamma ,
\end{equation*}
so that \eqref{eq:order-3}, \eqref{eq:order-3-00}, \eqref{e.meas-selection}, and Fubini's theorem yield
\begin{equation*}
\Lambda+\delta \ge \expec{\int_{\R^d}\bar{\nabla v_{\delta}}\cdot \LL \nabla  v_{\delta}} \,\geq \,\Lambda_1 \expec{\int_{\R^d}|\nabla v_{\delta}|^2} =\Lambda_1,
\end{equation*}
from which the desired inequality $\Lambda\geq \Lambda_1$ follows by the arbitrariness of $\delta$.

\medskip

\noindent \textit{Step~4.} Proof of $\Lambda_1\ge \Lambda$.

\noindent This is a standard cut-off argument.
Set $\delta>0$ and choose $w:(x,\omega)\mapsto e^{i\gamma\cdot x}p_\gamma(x)$, $\gamma\in Q^*$, $p_\gamma \in \Hh^1_1$ such that
$\expec{\int_Q \bar{\nabla w}\cdot \LL\nabla w}\le (\Lambda_1+\delta)\expec{\int_Q|\nabla w|^2}.$
By stationarity of $p_\gamma$ and the properties of the Hermitian product, for all $z\in \Z^d$, 
\begin{equation}\label{eq:order-1}
\int_{z+Q} \bar{\nabla w}\cdot \LL\nabla w(\omega)=\int_Q \bar{\nabla w}\cdot \LL\nabla w(T_z\omega)\text{ and }\int_{z+Q}|\nabla w|^2(\omega)=\int_Q|\nabla w|^2(T_z\omega).
\end{equation}
For $k\in \N$, let $\eta_k$ be a cut-off for $Q_{2k}$ in $Q_{2(k+[\sqrt{k}])}$ such that $|\nabla \eta_k|\lesssim 1$. Set $v_k:=\eta_kw$.
Then, for $x\in Q_{2k}$, $\nabla v_k(x)=\nabla w(x)$ and for $x\in Q_{2(k+[\sqrt{k}])}\setminus Q_{2k}$, $|\nabla v_k(x)|\leq C(|w(x)|+|\nabla w(x)|)$ for some universal constant $C$.
We thus obtain the two estimates
\begin{eqnarray*}
|\int_{\R^d}\bar{\nabla v_k}\cdot \LL\nabla v_k-\int_{Q_{2k}}\bar{\nabla w}\cdot \LL \nabla w|&\leq & C\int_{Q_{2(k+[\sqrt{k}])}\setminus Q_{2k}}|\nabla w|^2+|w|^2,\\
\int_{\R^d}|\nabla v_k|^2&\geq &\int_{Q_{2k}}|\nabla w|^2.
\end{eqnarray*}
In view of \eqref{eq:order-1}, the ergodic theorem yields almost surely
\begin{eqnarray*}
\lim_{k\uparrow \infty}\fint_{Q_{2k}}\bar{\nabla w}\cdot \LL \nabla w&=&
\expec{\int_Q\bar{\nabla w}\cdot \LL \nabla w},\\
\lim_{k\uparrow \infty} \fint_{Q_{2k}}|\nabla w|^2&=&\expec{\int_Q|\nabla w|^2},
\\
\lim_{k\uparrow \infty} \fint_{Q_{2k}}|w|^2&=&\expec{\int_Q|w|^2},
\end{eqnarray*}
and therefore, since $\lim_{k\uparrow \infty} \frac{|Q_{2k}|}{|Q_{2(k+[\sqrt{k}])}|}=1$, 
\begin{equation*}
\lim_{k\uparrow\infty}(2k)^{-d} \int_{Q_{2(k+[\sqrt{k}])}\setminus Q_{2k}}|\nabla w|^2+|w|^2
\,=\,\lim_{k\uparrow\infty} \Big|\fint_{Q_{2(k+[\sqrt{k}])}}|\nabla w|^2+|w|^2-\fint_{Q_{2k}}|\nabla w|^2+|w|^2\Big|\,=\,0.
\end{equation*}
This implies that almost surely
$$
\liminf_{k\uparrow \infty} \frac{\int_{\R^d}\bar{\nabla v_k}\cdot \LL\nabla v_k}{\int_{\R^d}|\nabla v_k|^2} \,\le  \,\frac{\expec{\int_Q \bar{\nabla w}\cdot \LL\nabla w}}{\expec{\int_Q|\nabla w|^2}}\,\leq\,\Lambda_1+\delta,
$$
and proves the claim by the arbitrariness of $\delta>0$.

\medskip

\noindent \textit{Step~5.} Proof of $\Lambda_2\ge \Lambda_1$.

\noindent  This is analogous to Step~2. Let $N\in \mathbb{N}$, $\gamma'\in\R^d$, and $q_N\in \Hh^1_N$. 
We consider $v:\R^d\to \R$ defined by $v:x\mapsto  e^{i \gamma'\cdot x}q_N(x)$. 
We then apply a variant of the Bloch transform defined in Step~1: For all $\gamma \in I:=\{0,2\pi\frac{1}{N},\dots,2\pi\frac{N-1}{N}\}^d$, we set
\begin{equation*}
\tilde{v}_\gamma(x,\omega)=\sum_{z\in \{0,\dots,N-1\}^d}e^{-i (\gamma+\gamma')\cdot z}v(x+z,T_{-z}\omega).
\end{equation*}

\medskip
\noindent Let now $\tilde{\gamma} \in [0,2\pi[^d$ be such that $\tilde{\gamma}= \gamma+\gamma'\pmod{2\pi}$. We argue that for all $z\in\Z^d$,
\begin{eqnarray}
\tilde{v}_\gamma(x+z,\omega)&=&e^{i (\gamma+\gamma')\cdot z}\tilde{v}_\gamma(x,T_z\omega)
\label{e.N-stat} \\
&=& e^{i \tilde{\gamma}\cdot z}\tilde{v}_\gamma(x,T_z\omega),\nonumber
\end{eqnarray}
so that 
\begin{equation*}
\tilde{v}_\gamma(x)=e^{i \tilde\gamma \cdot x}p(x)
\end{equation*}
for some $p\in \Hh^1_1$. Indeed, consider first $z\in N\mathbb{Z}^d$ in \eqref{e.N-stat}. In this case the $N$-stationarity of $q_N$ and the structure of $\gamma$ yield
\begin{align*}
\tilde{v}_\gamma(x+Ny,\omega)&=\sum_{z\in \{0,\dots,N-1\}^d}e^{-i \tilde{\gamma}\cdot z}e^{i\gamma^{\prime}\cdot Ny}v(x+z,T_{Ny}T_{-z}\omega) 
\\
&= e^{i\tilde{\gamma}\cdot Ny}\sum_{z\in \{0,\dots,N-1\}^d}e^{-i \tilde{\gamma}\cdot z}v(x+z,T_{-z}T_{Ny}\omega) = e^{i\tilde{\gamma}\cdot Ny}\tilde{v}_{\gamma}(x,T_{Ny}\omega)
\end{align*} 
as claimed. It remains to address the case $z\in \{0,\dots,N-1\}^d$  in \eqref{e.N-stat}, and by an iterative argument can be reduced to the case of a unit vector $z=e_i$. We split the sum as $S_1+S_2$ given below. On the one hand,
\begin{align*}
S_1:=&\sum_{\substack{z\in \{0,\dots,N-1\}^d \\ z_i< N-1}}e^{-i \tilde{\gamma}\cdot( z+e_i)}e^{i\tilde{\gamma}\cdot e_i}v(x+z+e_i,T_{-z-e_i}T_{e_i}\omega)
\\
=&\sum_{\substack{z^{\prime}\in \{0,\dots,N-1\}^d \\ z^{\prime}_i>0}}e^{-i \tilde{\gamma}\cdot z^{\prime}}e^{i\tilde{\gamma}\cdot e_i}v(x+z^{\prime},T_{-z^{\prime}}T_{e_i}\omega)
\end{align*}
On the other hand, again by $N$-stationarity and the structure of $\gamma$ we have
\begin{align*}
S_2:=&\sum_{\substack {z\in \{0,\dots,N-1\}^d \\ z_i=N-1}}e^{-i \tilde{\gamma}\cdot z}v(x+z+e_i,T_{-z-e_i}T_{e_i}\omega)
\\
=&\sum_{\substack {z\in \{0,\dots,N-1\}^d \\ z_i=N-1}}e^{-i \tilde{\gamma}\cdot z}e^{i\gamma^{\prime}\cdot Ne_i}v(x+z+(1-N)e_i,T_{-z+(N-1)e_i}T_{e_i}\omega)
\\
=&\sum_{\substack {z^{\prime}\in \{0,\dots,N-1\}^d \\ z^{\prime}_i=0}}e^{-i \tilde{\gamma}\cdot z^{\prime}}e^{i\tilde{\gamma}\cdot e_i}v(x+z^{\prime},T_{-z^{\prime}}T_{e_i}\omega).
\end{align*}
This proves our subclaim since  $\tilde{v}_{\gamma}(x+e_i,\omega)=S_1+S_2 = e^{i\tilde{\gamma}\cdot e_i}\tilde{v}_{\gamma}(x,T_{e_i}\omega)$. Analogously to \eqref{eq:order-3} one establishes
\begin{equation*}
\expec{\int_{Q_N}\bar{\nabla v}\cdot \LL \nabla v}\,=\,
\expec{\int_Q \sum_{\gamma\in I}  \bar{\nabla \tilde v_{\gamma}}\cdot \LL \nabla \tilde v_{\gamma}} .
\end{equation*}
The proof is finished the same way as in Step~2.

\medskip

\noindent \textit{Step~6.} Proof of $\Lambda_1\ge \Lambda_3$.

\noindent Together with the obvious inequality $\Lambda_3\ge \Lambda_2$ (take $\gamma=0$) and the previous steps, this will prove $\Lambda=\Lambda_1=\Lambda_2=\Lambda_3$.

For all $\gamma\in [0,2\pi[^d$ and $p\in \Hh^1_1$ we set $w:x\mapsto e^{i \gamma\cdot x}p (x)$.
We then note that by stationarity of $p$ and $\LL$, and the properties of the Hermitian product, for all $z\in \Z^d$,
\begin{eqnarray}
&N^{-d}\expec{\int_{Q_N}\bar{\nabla w} \cdot \LL \nabla w}\,=\, \expec{\int_Q \bar{\nabla w}\cdot \LL \nabla w}, &\label{eq:order-4} \\
& \expec{\int_{z+Q}|\nabla w|^2}=\expec{\int_{Q}|\nabla w|^2}
,\ \expec{\int_{z+Q}|w|^2}=\expec{\int_{Q}|w|^2}.&\label{eq:order-5} 
\end{eqnarray}
For all $N\geq 2$, we now construct a function $w_N\in \Hh_N^1$ associated with $w$ such that
\begin{equation}\label{eq:order-6}
\expec{\int_{Q_N}\bar{\nabla w_N}\cdot \LL \nabla w_N }\leq N^d\expec{\int_Q \bar{\nabla w} \cdot \LL \nabla w}+CN^{d-1} \expec{\int_Q |\nabla w|^2+|w|^2}.  
\end{equation}
Let $\eta_N$ be a smooth cut-off for the set $(1,N-1)^d$ in $Q_N$ such that $|\nabla \eta_N|\lesssim 1$. 
We then set $w_N(x,\omega):=\sum_{z\in \Z^d}\eta_N(x+Nz)w(x+Nz,T_{-Nz}\omega)$, where the sum is finite almost surely since $\eta_N$ has compact support.
By construction, $w_N\in \Hh^1_N$. Indeed, for all $k\in \Z^d$, by the group property of $T$,
\begin{eqnarray*}
w_N(x+Nk,\omega)&=&\sum_{z\in \Z^d}\eta_N(x+N(k+z))w(x+N(k+z),T_{-Nz}\omega)\\
&=&\sum_{z\in \Z^d}\eta_N(x+N(k+z))w(x+N(k+z),T_{-N(z+k)}T_{Nk}\omega)\\
&=&w_N(x,T_{Nk}\omega),
\end{eqnarray*}
as desired. We now give the argument for \eqref{eq:order-6}. For $x\in Q_N$,
$w_N(x,\omega)=\eta_N(x)w(x,\omega)$, so that
$$
\Big|\int_{Q_N}\bar{\nabla w_N}\cdot \LL \nabla w_N -\int_{(1,N-1)^d}\bar{\nabla w}\cdot \LL \nabla w\Big|\,\leq\,C \int_{Q_N\setminus (1,N-1)^d} |w|^2+|\nabla w|^2,
$$
from which \eqref{eq:order-6} follows by taking the expectation and using  \eqref{eq:order-4} and \eqref{eq:order-5}.
Likewise, 
\begin{equation}\label{eq:order-7}
\expec{\int_{Q_N}|\nabla w_N|^2 }\geq \expec{\int_{(1,N-1)^d} |\nabla w|^2}\,=\,(N-2)^d\expec{\int_{Q} |\nabla w|^2}.
\end{equation}
The combination of \eqref{eq:order-6} and \eqref{eq:order-7} then yields by definition of $\Lambda_3$:
\begin{equation*}
\frac{\expec{\bar{\nabla w}\cdot \LL \nabla w}}{\expec{|\nabla w|^2}} \geq \liminf_{N\to \infty} \frac{\expec{\int_{Q_N}\bar{\nabla w_N}\cdot \LL \nabla w_N }}{\expec{\int_{Q_N}|\nabla w_N|^2} } \geq \Lambda_3.
\end{equation*}
The assertion now follows by taking the infimum over $w$ as in the defining formula  for $\Lambda_1$ .

\medskip

\noindent \textit{Step~7.} Proof of $\Lambda_4\geq \Lambda_5$.

\noindent   Let $a\in \C^d$, $b\in \R^d$, and let $q\in \Hh^1_1$ be such that $|a\otimes b+\nabla q|\not\equiv 0$.
Let $\gamma_n$ be a sequence converging to zero.
We then define the stationary field
\begin{equation*}
p_n(x):=(q(x)+\frac{a}{i \gamma_n}),
\end{equation*}
and set $v_n(x):= e^{i \gamma_nb\cdot x}p_n(x)$.
By definition of $\Lambda_5$,
$$
\lim_{n\uparrow \infty} \frac{\expec{\int_Q\bar{\nabla v_n}\cdot \LL \nabla v_n}}{\expec{\int_Q |\nabla v_n|^2}}\geq \Lambda_5.
$$
On the other hand, 
\begin{equation*}
\nabla v_n(x)=\nabla  \left(e^{i \gamma_nb\cdot x}(q(x)+\frac{a}{i \gamma_n})\right)\,=\, e^{i \gamma_nb\cdot x}\left(\nabla q(x)+(q(x)+\frac{a}{i \gamma_n})\otimes i\gamma_nb\right) ,
\end{equation*}
so that $\lim_{n\uparrow \infty}\expec{\int_Q|\nabla v_n-\nabla q-a\otimes b|^2}=0$, and therefore
$$
\lim_{n\uparrow \infty} \frac{\expec{\int_Q\bar{\nabla v_n} \cdot \LL \nabla v_n}}{\expec{\int_Q |\nabla v_n|^2}}=  \frac{\expec{\int_Q(\bar{a\otimes b+\nabla q}) \cdot \LL (a\otimes b+\nabla q)}}{\expec{\int_Q |a\otimes b+\nabla q|^2}}.
$$
By arbitrariness of $a,b$ and $q$, this implies the claim $\Lambda_4\ge \Lambda_5$.

%%%%%%%%%%%%%%%%%%%%%%%%%%%%%%%%%%%%%%%%%%%%%%
%%%%%%%%%%%%%%%%%%%%%%%%%%%%%%%%%%%%%%%%%%%%%%

\subsection{Proof of \tref{alaGMT}}

We split the proof into four steps. We start by regularizing the problem to make the energy functional uniformly coercive, so that it can be homogenized.
Under the assumption that $\Lambda_6>0$ we then show in Steps~2 and 3 that one can pass to the limit in the regularization parameter. We conclude with the discussion of strong ellipticity depending whether $\Lambda_4>0$ or $\Lambda_4=0$.

\medskip 

\noindent \textit{Step~1.} Regularization.

\noindent For all $\eta\ge 0$ set $\LL^\eta=\LL+\eta \mathsf{1}_4$ (where $\mathsf{1}_4$ is the identity 4-tensor), and for all $\e>0$ set $\LL^\eta_\e:=\LL^\eta(\frac{\cdot}{\e})$.
Since $\Lambda\ge 0$, for all $\eta,\e>0$, Korn's inequality implies that the functional
$$
u\in \K \mapsto \F_\e^\eta(u):=\int_D \nabla u\cdot \LL^\eta_\e \nabla u
$$
satisfies for all $u\in \K$
$$
\F_\e^\eta(u)\,\gtrsim\, \eta \int_D |\nabla u|^2.
$$
It is then standard to prove that for $\eta>0$, $\F_\e^\eta$ $\Gamma(w-H^1)$-converges almost surely on $\K$ to the integral functional $\F_*^\eta:\K\to \R_+$ defined  by
$$
\F_*^\eta(u)\,=\,\int_D \nabla u \cdot \LL_*^\eta\nabla u
$$
for a strictly strongly elliptic matrix $\LL_*^\eta$ given in direction $M\in \Md$ by
$$
M\cdot \LL^\eta_* M\,=\,\inf_{\phi \in \Hhr^1_1} \expec{\int_Q(M+\nabla \phi)\cdot \LL^\eta(M+\nabla \phi)}.
$$
In particular, for all $\eta\ge 0,\e>0$, $\LL^\eta_\e\ge \LL_\e$ (in the sense of quadratic forms), and for all $u\in H^1_0(D)$, $\F^\eta_\e(u)\ge \F_\e(u)$.
Likewise, $\LL^\eta_*\ge \LL_*$ and for all $u\in \K$,  $\F^\eta_*(u)\ge \F_*(u)\,:=\,\int_D\nabla u \cdot\LL_*\nabla u$.

\medskip

\noindent \textit{Step~2.} $\Gamma$-liminf inequality.

\noindent Let us prove that for all $u\in \K$ and all $u_\e\in \K$ such that $u_\e \rightharpoonup u$ weakly in $H^1(D)$ we have almost surely
\begin{equation}\label{eq:GMT-1}
\liminf_{\e\downarrow 0} \F_\e(u_\e)\,\geq \, \F_*(u).
\end{equation}
On the one hand, for all $\eta>0$, the $\Gamma$-convergence of $\F_\e^\eta$ to $\F^\eta_*$ yields
\begin{equation}\label{eq:GMT-2}
\liminf_{\e\downarrow 0} \F_\e^\eta(u_\e)\,\geq \, \F_*^\eta(u).
\end{equation}
On the other hand, since $u_\e$ converges to $u$ weakly in $H^1(D)$, $u_\e$ is bounded by some finite constant $\sqrt C$ in $H^1(D)$ and
\begin{equation}\label{eq:GMT-3}
\F^\eta_\e(u_\e) \le \F_\e(u_\e)+\eta\|\nabla u_\e\|_{L^2(D)}^2 \,\leq \, \F_\e(u_\e)+C\eta.
\end{equation}
Hence, by \eqref{eq:GMT-2},
\begin{equation*}
\F_*(u)\le \F^\eta_*(u) \le  \liminf_{\e\downarrow 0}\F^{\eta}_{\e}(u_{\e})\le \liminf_{\e\downarrow 0}\F_{\e}(u_{\e})+\,C \eta.
\end{equation*}
This proves \eqref{eq:GMT-1} by the arbitrariness of $\eta>0$.

\medskip

\noindent \textit{Step~3.} Construction of a recovery sequence.
\nopagebreak
\noindent
Let $M\in \Md$ be fixed. We start by showing that a corrector exists.
Since $\Lambda_6\ge\Lambda\ge 0$, the map 
$$\L_M:\Hhr^1_1\ni q \mapsto \expec{\int_Q (M+\nabla q) \cdot \LL (M+\nabla q)}$$ 
is convex, and therefore weakly lower-semicontinuous for the weak convergence of gradient fields.
In addition, since $\LL$ is uniformly bounded, $\Lambda_6>0$ implies by Cauchy-Schwarz' and Young's inequalities (to control the two linear terms in $\nabla q$) that there exists $C_M>0$ such that for all $q\in \Hhr^1_1$, $\L_M$ satisfies
$$
\L_M(q)\,\ge\,\frac{\Lambda_6}{2} \expec{\int_Q|\nabla q|^2}-\frac{C_M}{\Lambda_6}.
$$
It is then standard to show there exists a unique potential field $\Phi_M \in \Hhr^0_1$ such that 
$$
\inf_{q\in \Hhr^1_1} \L_M(q)\,=\,\expec{\int_Q (M+\Phi_M)\cdot \LL (M+\Phi_M)}.
$$
We then denote by $\phi_M$ the unique random field such that $\nabla \phi_M=\Phi_M$ almost everywhere almost surely that satisfies $\int_B \phi_M=0$ almost surely (note that $\phi_M$ is not stationary, whereas $\nabla \phi_M$ is). 
Since $\nabla \phi_M$ is stationary,  $\phi_M$ is sublinear at infinity in the sense that
almost surely
\begin{equation}\label{eq:GMT-3.1}
\lim_{R\uparrow \infty} \fint_{B_R} R^{-2}|\phi_M|^2 \,=\,0.
\end{equation}
We further define a 3-tensor $\pi$ and a 4-tensor $\Pi$ such that 
for all $M\in \Md$,
$$
\pi(x)M\,=\,\phi_M(x),\quad \text{and }\Pi(x)M\,=\,\nabla \phi_M(x)
$$
(which is possible since $\phi_M$ depends linearly on $M$).
Let now $u\in \K$. 
We are in position to construct the desired recovery sequence, that is, for almost every realization, a sequence $u_\e \in \K$ that weakly converges to $u$ in $H^1(D)$ and that saturates the $\Gamma-\liminf$ inequality
in the sense that
$$
\lim_{\e \downarrow 0} \F_\e(u_\e)\,=\,\F_*(u).
$$
As well-known to the expert in homogenization, it is enough to prove the result for $u\in C^2(\bar D)$ and for a sequence  $u_\e \in H^1(D)$ (and not necessarily $u_\e \in \K$).
Indeed, the extension to $u\in \K$ with $u_\e \in \K$ is then proved by approximation and cut-off using Attouch' diagonalization lemma (randomness or periodicity plays no role provided we have \eqref{eq:GMT-3.2} below, see for instance \cite[Section~4.5, Step~2]{GMT-93}).
We thus take $u\in C^2(\bar D)$ and define $u_\e \in H^1(D)$ by
$$
u_\e(x)\,:=\,u(x)+\e \pi(\frac{x}{\e})\nabla u(x).
$$
In view of \eqref{eq:GMT-3.1}, $u_\e$ converges strongly to $u$ in $L^2(D)$ almost surely.
Likewise, since
\begin{equation}\label{eq:GMT-3.3}
\nabla u_\e(x)\,=\,\nabla u(x)+\Pi(\frac{x}{\e})\nabla u(x)+\e \pi(\frac{x}{\e})\nabla^2 u(x), 
\end{equation}
we have
$$
\int_D |\nabla u_\e|^2 \,\lesssim\, \int_D |\nabla u|^2 +\int_D |\Pi(\frac{x}{\e})|^2 |\nabla u(x)|^2+o(1).
$$
Combined with the ergodic theorem in the form of 
\begin{equation*}
\lim_{\e\downarrow 0}\int_D|\Pi(\frac{x}{\e})|^2 |\nabla u(x)|^2dx\,=\,\expec{\int_Q |\Pi|^2}\int_D |\nabla u|^2,
\end{equation*}
this turns into
\begin{equation}\label{eq:GMT-3.2}
\limsup_{\e \downarrow 0} \int_D |\nabla u_\e|^2 \,\lesssim \, \int_D |\nabla u|^2,
\end{equation}
which is indeed needed for the approximation argument (see \cite[Section~4.5, Step~2]{GMT-93}).
Since the last term in the right-hand side of \eqref{eq:GMT-3.3} converges strongly to zero in $L^2(D)$, we have almost surely
$$
\lim_{\e \downarrow 0} |\int_D\nabla u_\e \cdot \LL_\e \nabla u_\e
-\int_D \nabla u(x)\cdot (\mathsf{1}_4+\Pi(\frac{x}{\e})) \LL_\e(x) 
(\mathsf{1}_4+\Pi(\frac{x}{\e}))\nabla u(x)dx|\,=\,0.
$$
A last use of the ergodic theorem in the form of
\begin{eqnarray*}
\lefteqn{\lim_{\e \downarrow 0} \int_D \nabla u(x)\cdot (\mathsf{1}_4+\Pi(\frac{x}{\e})) \LL_\e(x) 
(\mathsf{1}_4+\Pi(\frac{x}{\e}))\nabla u(x)dx}
\\
&=& \int_D \nabla u(x)\cdot \expec{\int_Q (\mathsf{1}_4+\Pi(y)) \LL(y) 
(\mathsf{1}_4+\Pi(y))dy}\nabla u(x)dx
\\
&=& \int_D \nabla u \cdot \LL_* \nabla u
\end{eqnarray*}
yields the desired $\Gamma-\limsup$-inequality
$$
\lim_{\e \downarrow 0}\F_\e(u_\e)\,=\,\F_*(u).
$$

\medskip

\noindent \textit{Step~4.} Strong ellipticity of $\LL_*$.

\noindent We first treat the case when $\Lambda_4=0$.
Then there exist (real-valued) $a_n\otimes b_n$ and $q_n\in \Hh^1_1$ such that 
$\expec{\int_Q |a_n\otimes b_n+\nabla q_n|^2}=|a_n\otimes b_n|^2+\expec{\int_Q|\nabla q_n|^2}=1$ and
$$
\lim_{n\uparrow \infty} \expec{\int_Q (a_n\otimes b_n+\nabla q_n)\cdot \LL(a_n\otimes b_n+\nabla q_n)}\,=\,0.
$$
Since $|a_n\otimes b_n|\le 1$, we may assume (up to taking a subsequence) that $a_n\otimes b_n \to a\otimes b$. Using in addition that $\nabla q_n$ is bounded, this yields (along the subsequence)
\begin{equation}\label{eq:GMT-4.1}
\lim_{n\uparrow \infty} \expec{\int_Q (a\otimes b+\nabla q_n)\cdot \LL(a\otimes b+\nabla q_n)}\,=\,0.
\end{equation}
By definition of $\LL_*$, this implies $a\otimes b\cdot \LL_*a\otimes b=0$, so that $\LL_*$ loses ellipticity at $a\otimes b$ provided  $a\otimes b\neq 0$. Assume momentarily that $a\otimes b=0$.
Then for $n$ large enough, $\expec{\int_Q |\nabla q_n|^2}\ge \frac{1}{2}$ and \eqref{eq:GMT-4.1} turns into
$$
\lim_{n\uparrow \infty} \frac{\expec{\int_Q\nabla q_n\cdot \LL\nabla q_n}}{\expec{\int_Q |\nabla q_n|^2}}\,=\,0,
$$
which contradicts $\Lambda_6>0$.

\medskip

Assume now that $\Lambda_4>0$ and fix $a\otimes b \neq 0$.
Let $q\in \Hhr^1_1$ be a test-function for the minimizing problem defining $\LL_*$ in direction $a\otimes b$.
Then $\expec{\int_Q|a\otimes b+\nabla q|^2}=|a\otimes b|^2+\expec{\int_Q |\nabla q|^2}\ge |a\otimes b|^2$, so that
$$
\frac{\expec{\int_Q(a\otimes b+\nabla q)\cdot \LL(a\otimes b+\nabla q)}}{|a\otimes b|^2}\,\ge \, \frac{\expec{\int_Q(a\otimes b+\nabla q)\cdot \LL(a\otimes b+\nabla q)}}{\expec{\int_Q |a\otimes b+\nabla q|^2}}\,\geq \,\Lambda_4,
$$
which proves that $\Lambda_*\ge \Lambda_4$.

%%%%%%%%%%%%%%%%%%%%%%%%%%%%%%%%%%%%%%%%%%%%%%
%%%%%%%%%%%%%%%%%%%%%%%%%%%%%%%%%%%%%%%%%%%%%%

\section{Proofs: Estimate of the ellipticity constants}\label{sec:proofs-ell}

\subsection{Proof of \pref{LAMBDA}: $\Lambda\ge 0$}

We shall prove a lower bound on the energy, which is more precise than stated in \pref{LAMBDA} and which will be used later on.
We decompose $\LL$ as $\LL-\underline \LL+\underline \LL$ where $\underline \LL$ is the isotropic stiffness tensor of Lam\'e constants $\underline \lambda,\underline \mu$
defined as follows: $\underline\mu:=\mu_1$ and $\underline \lambda:=-2\mu_1$. Then, on the one hand, for any matrix $A\in\R^{2\times 2}$ (the entries of which
are denoted by $a_{ij}$) we have (cf.~\eqref{a+5})
\begin{equation*}
A\cdot\underline{\LL}A=-4\mu_1 a_{11}a_{22}+\mu_1 (a_{12}+a_{21})^2=-4\mu_1\det(A)+\mu_1(a_{12}-a_{21})^2.
\end{equation*}
On the other hand, on the set $\{x\,|\,\LL(x)=\LL^1\}$ we have
\begin{equation*}
A\cdot(\LL^1-\underline{\LL})A\, = \chi (\lambda_1+2\mu_1)(a_{11}+a_{22})^2,
\end{equation*}
whereas on the complementary set there holds
\begin{align*}
A\cdot(\LL^2-\underline{\LL})A=&(\lambda_2+2\mu_2)(a_{11}^2+a_{22}^2)+2(\lambda_2+2\mu_1)a_{11}a_{22}+(\mu_2-\mu_1)(a_{12}+a_{21})^2
\\
=&(\mu_2-\mu_1)(a_{11}^2+a_{22}^2)-2(\mu_2-\mu_1)a_{11}a_{22}+(\mu_2-\mu_1)(a_{12}+a_{21})^2
\\
&+(\lambda_2+\mu_2+\mu_1)(a_{11}^2+a_{22}^2)+2(\lambda_2+\mu_2+\mu_1)a_{11}a_{22}
\\
\geq&(\mu_2-\mu_1)\big((a_{11}-a_{22})^2+(a_{12}+a_{21})^2\big),
\end{align*}
where we used in the last estimate that $\lambda_2+\mu_2+\mu_1\geq 0$. Combining the above estimates, we deduce from \eqref{eq:cond} that for 
$\alpha=\min\{\mu_1,\mu_2-\mu_1\}>0$ we have
\begin{align}\label{e.lowerbound}
A\cdot\LL A\geq& -4\mu_1\det(A)+ \chi \alpha \big((a_{11}+a_{22})^2 + (a_{12}-a_{21})^2\big)\nonumber
\\
&+(1-\chi)\alpha\big((a_{11}-a_{22})^2+(a_{12}^2+a_{21}^2)\big).
\end{align}
Now given any $u\in C_0^{\infty}(\R^2,\R^2)$, we observe that \eqref{e.lowerbound} yields
\begin{equation*}
\int_{\R^2}\nabla u\cdot \LL \nabla u\geq -4\mu_1\int_{\R^2}\det(\nabla u) = 0,
\end{equation*}
from which the claim $\Lambda\ge 0$ follows.

%%%%%%%%%%%%%%%%%%%
%%%%%%%%%%%%%%%%%%%

\subsection{Properties of sequences of equi-bounded energy}

We first establish a general property of sequences of equi-bounded energy  that will be used several times in the  proofs. 
\begin{lemma}\label{l.limiteq}
Let $\LL:\R^2\times \Omega \to \{\LL^1,\LL^2\}$ be a stationary random field satisfying \eqref{eq:cond}. Let $M\in\R^{2\times 2}$ be such that $\det(M)=0$ and assume 
there exists a sequence $v_n\in\Hhr^1_1$ that satisfies $\sup_n\expec{\int_Q|\nabla v_n|^2}< \infty$ and
\begin{equation}\label{a+3}
\lim_{n\to \infty}\expec{\int_Q(M+\nabla v_n)\cdot\LL(M+\nabla v_n)}=0.
\end{equation}	
Then, up to a subsequence, $\nabla v_n$ converges weakly in $L^2(\Omega\times Q)$ to a stationary field $U:\R^2\times \Omega\to \R^{2\times 2}$ with $U\in L^2(\Omega\times Q)$ satisfying the following property: for $\mathbb{P}$-almost every $\omega$ there exists a function $v=v_{\omega}\in H^1_\loc(\R^2,\R^2)$ such that $U=\nabla v$. 
In addition, with $l_M$ the linear function $y\mapsto My$, there exists almost surely a potential $\psi^M\in H^2_\loc(\R^2)$ such that $l_M+v=\nabla\psi^M$. On the set $\{x\,|\,\LL(x)=\LL^1\}$, it is harmonic, i.e.
\begin{equation*}
-\triangle\psi^M\,=\,0
\end{equation*}
whereas on each connected component $I_j$ of the set $\{x\,|\,\LL(x)\neq \LL^1\}$ there exist real parameters $a_j,b_j,c_j,d_j\in\R$ such that
\begin{equation*}
\psi^M(y)\,=\,a_j(y_1^2+y_2^2)+b_jy_1+c_jy_2+d_j\quad\quad\text{on }I_j.
\end{equation*}
\qed
\end{lemma}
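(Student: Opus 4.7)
Plan. \emph{Compactness and weak limit.} The uniform bound $\sup_n\expec{\int_Q|\nabla v_n|^2}<\infty$ together with reflexivity yields, along a non-relabelled subsequence, $\nabla v_n\rightharpoonup U$ weakly in $L^2(\Omega\times Q)$, where $U$ is stationary (stationarity passes to weak limits) and curl-free in the distributional sense. Since $\R^2$ is simply connected, the Poincar\'e lemma provides, for $\P$-almost every $\omega$, a local primitive $v=v_\omega\in H^1_\loc(\R^2,\R^2)$ with $\nabla v=U$.

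\emph{Pointwise identification of $M+U$ via the lower bound.} The plan is to apply the pointwise lower bound \eqref{e.lowerbound} to $A=M+\nabla v_n$, integrate over $Q$, and take expectation. The determinant contribution vanishes entirely: $\det M=0$ by hypothesis; $\expec{\int_Q\nabla v_n}=0$ by integration by parts combined with the stationarity of $v_n$ itself, which forces the expected boundary integral on $\partial Q_R$ to grow at most linearly in $R$ (so $\fint_{Q_R}\nabla v_n\to 0$ in expectation); and $\expec{\int_Q\det\nabla v_n}=0$ by the same argument applied to the 2D divergence-form identity $\det\nabla v_n=\partial_1(v_{n,1}\partial_2 v_{n,2})-\partial_2(v_{n,1}\partial_1 v_{n,2})$. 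Together with \eqref{a+3}, each of the five nonnegative squared quantities on the right-hand side of \eqref{e.lowerbound} tends to zero in $L^1(\Omega\times Q)$, hence strongly to zero in $L^2(\Omega\times Q)$. Since $\chi$ and $1-\chi$ are bounded, multiplication preserves the weak convergence $\nabla v_n\rightharpoonup U$, so uniqueness of limits forces almost everywhere that $M+U$ is symmetric and traceless on $\{\LL=\LL^1\}$ and equals a scalar multiple $\mu(x)I$ of the identity on $\{\LL=\LL^2\}$.

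\emph{Lifting to $\psi^M$.} Setting $u:=l_M+v$, so that $\nabla u=M+U$, the vector field $u$ is itself curl-free: on $\{\LL=\LL^1\}$ because $\nabla u$ is symmetric, and on $\{\LL=\LL^2\}$ because the off-diagonal entries of $\nabla u$ vanish. A second application of the Poincar\'e lemma on $\R^2$ then produces $\psi^M\in H^2_\loc(\R^2)$ with $\nabla\psi^M=u=l_M+v$. Harmonicity of $\psi^M$ on $\{\LL=\LL^1\}$ is immediate from tracelessness of $\nabla^2\psi^M=M+U$ there.

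\emph{Polynomial form on each $I_j$, and main obstacle.} Fix a connected component $I_j$ of $\{\LL=\LL^2\}$. From $\partial_{12}\psi^M=0$ and $\partial_{11}\psi^M=\partial_{22}\psi^M$ on $I_j$, commutativity of weak derivatives gives, with $\xi:=\partial_{11}\psi^M\in L^2_\loc(I_j)$, the relations $\partial_1\xi=\partial_2(\partial_{12}\psi^M)=0$ and $\partial_2\xi=\partial_1(\partial_{12}\psi^M)=0$. Connectedness of $I_j$ then forces $\xi\equiv 2a_j$ constant on $I_j$, so that $\psi^M-a_j(y_1^2+y_2^2)$ has vanishing Hessian and is therefore affine $b_jy_1+c_jy_2+d_j$ on $I_j$, as claimed. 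The main technical point is the null-Lagrangian identity in the stationary (rather than periodic) setting of Step~2, which crucially exploits the stationarity of $v_n$ itself (and not just of $\nabla v_n$) to control the boundary terms; the remaining steps are a standard mixture of weak/strong convergence bookkeeping and pointwise linear algebra.
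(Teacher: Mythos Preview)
Your proof is correct and follows essentially the same route as the paper's: weak compactness, the null-Lagrangian identity $\expec{\int_Q\det(M+\nabla v_n)}=0$, the pointwise lower bound \eqref{e.lowerbound} to force the five scalar constraints in the limit, and then two applications of the Poincar\'e lemma. The only cosmetic differences are that the paper passes to the limit by weak lower semicontinuity of each nonnegative quadratic term (rather than your strong-$L^2$-convergence-plus-uniqueness-of-weak-limits argument), and that on each $I_j$ the paper argues $\partial_i\psi^M$ depends on $y_i$ only and then matches the linear slopes, whereas you show directly that $\partial_{11}\psi^M=\partial_{22}\psi^M$ is locally constant; your version is arguably cleaner on non-convex components. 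Your emphasis on the null-Lagrangian step as the ``main obstacle'' is a bit overstated --- the paper dispatches it in one line (equation \eqref{a+4}) via the same stationarity-based integration by parts you describe.
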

\begin{proof}[Proof of \lref{limiteq}]
We split the proof into two steps.

\medskip

\noindent \textit{Step~1.} Construction of a potential field $U$.

\noindent Since $\sup_n\expec{\int_Q |\nabla v_n|^2}<\infty$, there exists a stationary field $U:\R^d\times \Omega\to \R^{2\times 2}$  bounded in $L^2(\Omega\times Q)$ 
and such that (along a subsequence, not relabelled)
$\nabla v_n\rightharpoonup U$ weakly in $L^2(\Omega\times Q)$. 
Let us give the standard argument that proves that $U$ is almost surely a potential field on $\R^2$.
It is enough to show that for all random variables $\theta \in L^2(\Omega)$ and all $\eta \in C^\infty_c(\R^2,\R^2)$
we have for $i=1,2$,
\begin{equation}\label{a+1}
\expec{\theta \int_{\R^2} (U^i_2 \partial_1 \eta -U_1^i \partial_2 \eta )}\,=\,0.
\end{equation}
Set $\xi:=\sum_{z\in \Z^d} \theta(T_{-z}\omega) \eta(z+\cdot)$, which is well-defined since $\eta$ has compact support.
By definition, $\xi$ is stationary and it belongs to $\Hhr^1_1$.
Since $\nabla v_n$ is a gradient,  the Schwarz' commutation rule yields
\begin{eqnarray*}
0&=&\expec{\theta \int_{\R^2} (\partial_2 v_n^i  \partial_1 \eta -\partial_1 v_n^i  \partial_2 \eta )}
\end{eqnarray*}
Combined with the definition of $\xi$ and the fact that $T_z$ preserves the measure, this takes the form
\begin{eqnarray*}
0&=&\expec{ \int_{Q} (\partial_2 v_n^i \partial_1 \xi -\partial_1 v_n^i \partial_2 \xi )}.
\end{eqnarray*}
Passing to the limit $n\uparrow \infty$ in this identity, and using again the definition of $\xi$, \eqref{a+1} follows.
Since $U$ is a potential field, for all balls $B_j\subset\R^2$ there exists $v_{j,\omega}\in H^1(B_j,\R^2)$ such that $U(\omega,\cdot)=\nabla v_{j,\omega}$ on $B_j$
(see e.g.~\cite[Lemma 3]{Bourgain2000}). By a diagonal construction, we obtain a function $v_{\omega}\in H^1_\loc(\R^2,\R^2)$ such that  $U(\omega,\cdot)=\nabla v_{\omega}$ in $\R^2$ on a set of full probability. It remains to prove the existence and the equations for the potential $\psi^M$.
	
\medskip

\noindent \textit{Step~2.} Construction and properties of $\psi^M$.	
	
\noindent  First note that the null Lagrangian $\det \nabla u$ satisfies $\expec{\int_Q \det \nabla u}\equiv 0$ for all $u\in\Hhr^1_1$,
which indeed follows from a direct integration by parts for $d=2$:
\begin{equation}\label{a+4}
\expec{\int_Q \det \nabla u}\,=\,\expec{\int_Q \partial_1 u^1\partial_2 u^2-\partial_2 u^1 \partial_1 u^2}\,=\, \expec{\int_Q \partial_1 u^1\partial_2 u^2-\partial_1 u^1 \partial_2 u^2}\,=\,0.
\end{equation}
Likewise, if $\det(M)=0$, the same argument combined with a linear expansion of the determinant with respect to the column vectors
yields or all $u\in\Hhr^1_1$
\begin{equation}\label{a+2}
\expec{\int_Q\det(M+\nabla u)}=0.
\end{equation}
The combination of~\eqref{a+2} and~\eqref{e.lowerbound} then yields (recall that $\chi$ is the 
indicator function of the set on which $\LL$ takes value $\LL^1$)
\begin{eqnarray*}%\label{e.conv-zero-energy}
\lefteqn{\expec{\int_Q (M+\nabla v_n)\cdot \LL(M+\nabla u_n)}}
\\
&\ge &
\alpha \expec{\int_Q \chi  \big(M_{11}+\partial_1 v^1_n+M_{22}+\partial_2 v^2_n\big)^2}
\\
&&+\alpha  \expec{\int_Q \chi\big(M_{12}+ \partial_2 v^1_n-M_{21}-\partial_1 v^2_n\big)^2}
\\
&&+\alpha \expec{\int_Q (1-\chi)\big(M_{11}+\partial_1 v^1_n-M_{22}-\partial_2 v^2_n)^2}
\\
&&+\alpha\expec{\int_Q (1-\chi)\big((M_{12}+\partial_2 v^1_n)^2+(M_{21}+\partial_1 v^2_n)^2\big)}.
\end{eqnarray*}
Since each of the RHS terms is a non-negative quadratic form, assumption~\ref{a+3} implies by weak convergence
of $\nabla v_n$ to $\nabla v$ in $L^2(\Omega\times Q)$:
On the set $\{x\,|\,\LL(x)=\LL^1\}$,
\begin{align}
M_{11}+\partial_1 v^1+M_{22}+\partial_2 v^2&=0,\label{eq:PDE1}\\
M_{12}+\partial_2 v^1&=M_{21}+\partial_1 v^2,\label{eq:PDE2}
\end{align} 
while on the complementary set $\{x\,|\,\LL(x)\neq \LL^1\}$, we have
\begin{align}
M_{11}+\partial_1 v^1&=M_{22}+\partial_2 v^2,\label{eq:PDE3}\\
\big(M_{12}+\partial_2 v^1\big)^2&=\big(M_{21}+\partial_1 v^2 \big)^2=0.\label{eq:PDE4}
\end{align}
The combination of \eqref{eq:PDE2} and \eqref{eq:PDE4} implies the existence of a potential $\psi^M\in H^2_{\rm loc}(\R^2)$ such that $\nabla\psi^M=l_M+ v$. On the set $\{x\,|\,\LL=\LL^1\}$ equation \eqref{eq:PDE1} yields that $\triangle\psi^M=0$, while on each connected component of the set $\{x\,|\,\LL\neq\LL^1\}$ equation \eqref{eq:PDE4} shows that $\partial_i \psi^M$ is a function of $y_i$ only, so that \eqref{eq:PDE3} implies that both functions are linear with the same derivative. In particular, for each connected component $I_j$ of $\{x\,|\,\LL(x)\neq\LL^1\}$ there exist parameters $a_j,b_j,c_j,d_j\in\R$ such that $\psi^M(y)=a_j(y_1^2+y_2^2)+b_jy_1+c_jy_2+d_j$ on that component.
\end{proof}

In the next lemma, we strengthen the conclusion of \lref{limiteq} by exploiting Assumptions~\ref{a.non-neg}, ~\ref{b.non-neg} or~\ref{c.non-neg}. This is the 
only place where the precise geometric structure of the boundary is used.
\begin{lemma}\label{l.weaktozero}
Let the random field $\LL:\R^2\times \Omega$ satisfy Assumption~\ref{a.non-neg},~\ref{b.non-neg} or~\ref{c.non-neg}, and let $M\in\R^{2\times 2}$ be such that $\det(M)=0$. 
If there exists a sequence $v_n\in\Hhr^1_1$ satisfying $\sup_n\expec{\int_Q|\nabla v_n|^2}<\infty$ and
\begin{equation*}
\lim_{n\to \infty}\int_Q(M+\nabla v_n)\cdot\LL(M+\nabla v_n) = 0,
\end{equation*}
then $M=0$ and $\nabla v_n\rightharpoonup 0$ in $L^2(\Omega\times Q)$.

If $\LL$ satisfies Assumption~\ref{c.non-neg} and (C3-b), we rather have
\begin{itemize}
\item if $\theta_1\ne \frac12$, then $M=0$ and  $\nabla v_n\rightharpoonup 0$ in $L^2(\Omega\times Q)$;
\item if $\theta_1=\theta_2=\frac12$, then there exists $\kappa\in \R$ such that $M=\kappa e_2\otimes e_2$,
and if $\kappa=0$ then $\nabla v_n\rightharpoonup 0$ in $L^2(\Omega\times Q)$.
\end{itemize}
\qed
\end{lemma}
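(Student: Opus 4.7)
The plan is to apply \lref{limiteq} to extract, along a subsequence (not relabelled), a weak limit $\nabla v_n \rightharpoonup U = \nabla v$ in $L^2(\Omega\times Q)$, together with the scalar potential $\psi^M \in H^2_{\loc}(\R^2)$ satisfying $\nabla \psi^M = l_M + v$, harmonic on $\{\LL = \LL^1\}$ and coinciding with a quadratic $a_j(y_1^2+y_2^2) + b_jy_1 + c_jy_2 + d_j$ on each connected component $I_j$ of $\{\LL = \LL^2\}$. The goal in each geometric regime is to force $M=0$ and $\nabla v = 0$, with the one exception of (C3-b) in the balanced volume-fraction case. Once this holds for every subsequential weak limit, the subsequence-subsequence principle upgrades the conclusion to $\nabla v_n \rightharpoonup 0$ along the whole sequence.

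\textbf{Class~A.} The weak phase is a single unbounded connected component $I$, and on $I$ the Hessian $\nabla^2\psi^M \equiv 2a\,\Id$ is constant. Stationarity of $\nabla v$ together with ergodicity forces the scalar $a=a(\omega)$ to be deterministic, and similarly yields deterministic coefficients $b$, $c$. Fix a bounded strong-phase inclusion $I'_k$ and set $\phi(y) := a(y_1^2+y_2^2)+by_1+cy_2+d$, the quadratic extension off the adjacent weak phase. Since $\psi^M \in H^2_{\loc}$, the function $u := \psi^M - \phi$ solves $\Delta u = -4a$ in $I'_k$ with vanishing Dirichlet \emph{and} Neumann data on $\partial I'_k$. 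Integrating over $I'_k$ gives $4a|I'_k| = 0$, hence $a=0$; standard uniqueness for the Dirichlet problem then yields $u \equiv 0$ in $I'_k$, so $\psi^M = \phi$ globally and $\psi^M$ is affine. This forces $\nabla v = -M$ constant, and the zero-mean identity $\expec{\int_Q \nabla v} = 0$ gives $M = 0$, whence $\nabla v = 0$.

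\textbf{Classes~B and (C3-a).} Here the strong phase (respectively one of its large connected components) carries a flat segment or an arc of circle belonging to the boundary of a weak inclusion $I_j$. The Cauchy data for the harmonic $\psi^M$ on the strong side are prescribed by the quadratic $a_j(y_1^2+y_2^2)+b_jy_1+c_jy_2+d_j$; explicit resolution of the Cauchy problem for $-\Delta$ across a flat segment (by reflection, yielding the quadratic $a_j(\tau^2 - \nu^2) + \cdots$ in tangential/normal coordinates) or across a circular arc gives $\psi^M$ as an explicit quadratic polynomial in a neighborhood of the segment. Real-analyticity of harmonic functions propagates this representation to the entire connected strong component. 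Matching with another (or the same) inclusion along another boundary portion then yields an algebraic constraint between the coefficients of the various inclusions that forces $a_j = 0$ for every $j$ and $\psi^M$ globally affine; as in Class~A, $M=0$ and $\nabla v = 0$ follow.

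\textbf{Class~(C3-b) (laminate).} All components are bands orthogonal to $e_1$, so $\LL$ depends on $x_1$ only. Matching $\psi^M$ and $\partial_1 \psi^M$ across each interface yields an explicit one-dimensional linear system for the admissible $(a_j,b_j,c_j)$, which together with stationarity of $\nabla v$, the zero-mean identity $\expec{\int_Q \nabla v} = 0$, and the requirement that $\nabla v$ be bounded in $L^2$ (hence no secular growth in $y_2$) restrict $M$ to a single rank-one direction, namely $\R\,e_2\otimes e_2$. A direct computation splits into two cases: if $\theta_1 \neq \theta_2$, the one-dimensional system forces $M = 0$, hence $\nabla v \rightharpoonup 0$; if $\theta_1 = \theta_2 = \tfrac12$, one only obtains $M = \kappa\, e_2\otimes e_2$ for some $\kappa\in\R$, and a second inspection shows $\nabla v \rightharpoonup 0$ precisely when $\kappa = 0$. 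The delicate step throughout is the overdetermined Cauchy-problem argument of Classes~B/(C3-a), which is exactly why these cases require the technical geometric assumptions (B3)/(C3-a) on the boundary.
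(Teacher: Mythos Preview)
Your overall strategy---apply \lref{limiteq}, then exploit the overdetermined system \eqref{eq:THEPDE} in each geometric regime---is exactly the paper's. Your Class~A argument (integrate $\Delta(\psi^M-\phi)=-4a$ over a bounded strong inclusion, use zero Neumann data to get $a=0$) is a clean variant of the paper's, which instead solves the Dirichlet problem for the components $\partial_i\psi$ on each inclusion; both work. Your (C3-b) sketch reaches the right conclusion, though the paper is more direct: it simply computes $\nabla^2\psi$ globally from the laminate structure and uses ergodicity on the constant entry $\partial_2^2\psi$, bypassing any interface matching or ``secular growth'' considerations.

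The genuine gap is in your combined treatment of B and (C3-a). For B the sketch is fine: the strong phase is connected, Holmgren gives $\psi=\psi_S$ (or $\psi_C$) on all of it, and matching $\nabla\psi_S$ against the linear data $2a_jy+(b_j,c_j)$ on any bounded weak inclusion $I_j$ forces $a_j=a_1$ \emph{and} $a_j=-a_1$, hence $a_1=0$. But in (C3-a) the strong phase is \emph{not} connected, and two distinct ingredients are missing from your outline. First, ``matching along another boundary portion'' is not enough to kill $a_{j_1}$: if that portion is another flat segment parallel to the first, you only get $a_j=a_{j_1}$, not zero. The paper uses precisely that the strong component $K_1$ is not an infinite stripe---hence $\partial K_1$ has a genuinely non-flat portion---and differentiates the compatibility relation along a curve with two linearly independent tangent directions to obtain $a_{j_1}=0$. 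Second, once $a_{j_1}=0$ you still need to propagate this to \emph{every} weak component $I_j$, including those that do not touch $K_1$. The paper does this via a finite chain of alternating strong and weak components connecting $I_{j_1}$ to an arbitrary $I_{j_0}$, constructed by a nearest-point descent argument that terminates by local finiteness of the components. Without these two steps the (C3-a) case is incomplete.
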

\begin{proof}[Proof of \lref{weaktozero}]
We split the proof into four steps.
In the first step, we apply \lref{limiteq} to the sequence $v_n$. 
In the second, third, and fourth steps, we prove the claim under Assumption~\ref{a.non-neg}, Assumption~\ref{b.non-neg}, and Assumption~\ref{c.non-neg}, respectively.

\medskip

\noindent \textit{Step~1.} Weak limit of $\nabla v_n$.

\noindent 
By \lref{limiteq}, up to taking a subsequence, $\nabla v_n\rightharpoonup \nabla v$ for some $\nabla v\in L^2(\Omega \times Q)$, which agrees indeed with a gradient for almost every $\omega\in\Omega$.
In addition, there exists almost surely a potential $\psi\in H^2_\loc(\R^2)$ such that $l_M+v=\nabla \psi$, and 
such that 
\begin{itemize}
\item for each connected component $I_j$ of $\{x\,|\,\LL(x)\neq\LL^1\}$ we have $\psi(y)=a_j(y_1^2+y_2^2)+b_jy_1+c_jy_2+d_j$ for some parameters $a_j,b_j,c_j,d_j\in\R$, 
\item on the set $\{x\,|\,\LL(x)=\LL^1\}$, $\triangle\psi = 0$.
\end{itemize}
Since $\psi\in H^2_{\rm loc}(\R^2)$ and all the connected components $I_j$ have Lipschitz-boundary by assumption,   the traces of $\psi$ and $\nabla\psi$ must agree on both sides of $I_j$. Thus $\psi$ solves the following (hopefully) overdetermined elliptic equation:
\begin{equation}\label{eq:THEPDE}
\begin{cases}
\triangle \psi = 0 &\mbox{on $\R^2\setminus\bigcup_j \overline{I_j}$,} \\
\psi = a_j(y_1^2+y_2^2)+b_jy_1+c_jy_2+d_j&\mbox{in $I_j$,}\\
\nabla\psi = \begin{pmatrix*}
2 a_jy_1+b_j\\
2a_jy_2+c_j
\end{pmatrix*} &\mbox{on $\partial I_j$.}
\end{cases}
\end{equation}

\medskip

\noindent \textit{Step~2.} Proof under Assumption~\ref{a.non-neg}.

\noindent In this case, \eqref{eq:THEPDE} implies that $\psi(y)=a(y_1^2+y_2^2)+b_1y_1+b_2 y_2+d$ on the connected set $\{x\,|\,\LL\neq\LL^1\}$
for some parameters $a$, $b_1$, $b_2$ and $d$, and $\psi$ is harmonic on $\{x\,|\,\LL=\LL^1\}$.
Since $\psi \in H^2_\loc(\R^d)$, for $i=1,2$, $\partial_i \psi(y)=2 ay_i+b_i$ is linear on  $\{x\,|\,\LL\neq\LL^1\}$, harmonic on the inclusions $I_j \subset \{x\,|\,\LL=\LL^1\}$,
and have boundary value $y\mapsto 2 ay_i+b_i$ on $\partial I_j$. By uniqueness of the Dirichlet problem on the inclusions $I_j$, 
$\nabla \psi$ is linear on $\R^2$ and given by
$$
\nabla \psi(y)= 
\begin{pmatrix*}
2ay_1+b_1\\
2ay_2+b_2
\end{pmatrix*} .
$$
Hence, $\nabla^2 \psi=2a \Id$.
We then appeal to the identity $\nabla^2 \psi =M+\nabla v$ and to the ergodic theorem in the form $\lim_{R\uparrow \infty} \fint_{Q_R} \nabla v=\expec{\int_Q \nabla v}=0$,
to the effect that $2a \Id=M$. Since $\det (M)=0$, this implies $a=0$ and $M=0$, and therefore also $\nabla v\equiv 0$.

\medskip

\noindent \textit{Step~3.} Proof under Assumption~\ref{b.non-neg}.

\noindent We argue that if Assumption~\ref{b.non-neg} holds, \eqref{eq:THEPDE} is possible only if $a_j=0$ for all $j$. 
Without loss of generality we assume that $\partial I_1$ contains some straight segment $S$ or an arc of circle $C$. 
Next, given $\psi$ solving \eqref{eq:THEPDE} and a rigid motion $R$, the function $\psi_{R}=\psi\circ R^{-1}$ solves the system
\begin{equation*}
\begin{cases}
\triangle \psi_{R} = 0 &\mbox{on $\R^2\setminus\bigcup_i R(\overline{I_j})$,} \\
\psi_R=a_j(y_1^2+y_2^2)+\tilde{b_j}y_1+\tilde{c_j}y_2+\tilde{d_j} &\mbox{in $R(I_j)$,}\\
\nabla\psi_R = \begin{pmatrix*}
2 a_jy_1+\tilde{b_j}\\
2a_jy_2+\tilde{c_j}
\end{pmatrix*} &\mbox{on $\partial R(I_j)$.}
\end{cases}
\end{equation*}
Hence, without loss of generality, up to replacing $\psi$ by $\psi_R$, we can assume that $S=[0,t]\times\{0\}$ or $C\subset \partial B_t$ for some $t>0$. 
Consider then the two functions
\begin{align*}
\psi_{S}(y)&= a_1(y_1^2-y_2^2)+b_1 y_1 +c_1y_2+d_1,\\
\psi_{C}(y)&=a_1\log(y_1^2+y_2^2)-a_1\log(t^2)+a_1t^2+b_1 y_1 +c_1y_2+d_1,
\end{align*}
which satisfy $\triangle\psi_{S}=0$ on $\R^2$ and $\triangle\psi_C=0$ on $\R^2\setminus\{0\}$, respectively. 
They also satisfy the boundary conditions $\psi_{S}=\psi$ and $\nabla\psi_{S}=\nabla\psi$ on $S$ or $\psi_C=\psi$ and $\nabla\psi_C=\nabla\psi$ on $C$, respectively. 
By Holmgren's uniqueness theorem, we deduce that  $\psi=\psi_{S}$ or $\psi=\psi_C$ on an open set and therefore on the connected set $\R^2\setminus\bigcup_i\overline{I_j}$. 
Consider now an arbitrary component $I_j$ such that $0\notin I_j$. By \eqref{eq:THEPDE}, the functions $\nabla\psi_{S}$ and $\nabla\psi_C$ are linear (and thus harmonic) on $\partial I_j$. By uniqueness for the Dirichlet problem on $I_j$ we obtain the following compatibility conditions 
\begin{align*}
\begin{pmatrix*}
2a_jy_1+b_j\\
2a_jy_2+c_j
\end{pmatrix*}=\begin{pmatrix*}
2a_1y_1+b_1\\
-2a_1y_2+c_1
\end{pmatrix*} \quad\text{or}\quad\begin{pmatrix*}
2a_jy_1+b_j\\
2a_jy_2+c_j
\end{pmatrix*}=\begin{pmatrix*}
2a_1\frac{y_1}{y_1^2+y_2^2}+b_1\\
2a_1\frac{y_2}{y_1^2+y_2^2}+c_1
\end{pmatrix*}\quad\mbox{on $I_j$.}
\end{align*}
Since $I_j$ is an open set, this yields $a_j=a_1=0$, and $b_j=b_1$ and $c_j=c_1$. Thus the restriction that $0\notin I_j$ was not necessary and we conclude that there exist $b,c\in \R$ such that
 $\nabla \psi=\begin{pmatrix*}
b\\
c
\end{pmatrix*}=l_M+v$. 
Taking the gradient of this identity and using the ergodic theorem in the form $\lim_{R\uparrow \infty} \fint_{Q_R} \nabla v=\expec{\int_Q \nabla v}=0$,
we deduce that $M=0$, and therefore also $\nabla v=\nabla^2\psi-M=0$, as claimed.

\medskip

\noindent \textit{Step~4.} Proof under Assumption~\ref{c.non-neg}.

\noindent Again we exploit the PDE \eqref{eq:THEPDE}. We claim that the coefficients $a_j$ are independent of $j$. Here comes the argument: Consider the connected component $K_1$ of the set $\{x\,|\,\LL(x)=\LL^1\}$ which, we assume, contains a segment $S=R^{-1}_1(\{0\}\times [0,t])\subset\partial K_1$, where $R_1(y)=Q_1(y)+q_l$ is a rigid motion and $t>0$. Then by Lipschitz regularity, we can find a connected component $I_{j_1}$ of $\{x\,|\,\LL(x)\neq\LL^1\}$ such $S\subset\partial I_{j_1}$. As in {\it Step 3}, Holmgren's uniqueness theorem implies that on $K_1$ we have
\begin{equation*}
\psi(y) = y^TQ_1^T \begin{pmatrix} -a_{j_1} &0 \\ 0 &a_{j_1}\end{pmatrix}Q_1y+\bar{b}_1y_1+\bar{c}_1y_2+\bar{d}_1
\end{equation*}
for some constants $\bar{b}_1,\bar{c}_1,\bar{d}_1\in\R$. Our focus lies on the gradient $\nabla\psi$ which on the set $K_1$ takes the form
\begin{equation}\label{e.formulagradient}
\nabla\psi (y) = Q_1^T \begin{pmatrix} -2a_{j_1} &0 \\ 0 &2a_{j_1}\end{pmatrix}Q_1y+\begin{pmatrix}
 \bar{b}_1 \\ \bar{c}_1
\end{pmatrix}.
\end{equation}

We first treat Assumption~(C3-a) and show that $a_{j_1}=0$. To this end, we derive a compatibility condition at a non-flat part of the boundary $\partial K_1$ (we assume that $\partial K_1$ is not straight). By assumption there exists a Lipschitz curve $\gamma_*:(-\delta,\delta)\to\R^2$ and a connected component of $I_n$ of the set $\{x\,|\,\LL(x)\neq\LL^1\}$ such that $\gamma_*((-\delta,\delta))\subset\partial K_1\cap \partial I_n$  is non-flat. The boundary values of $\nabla\psi$ on $\partial I_{n}$ and \eref{formulagradient} then yield
\begin{equation*}
Q_1^T \begin{pmatrix} -2a_{j_1} &0 \\ 0 &2a_{j_1}\end{pmatrix}Q_1\gamma_*(t)+\begin{pmatrix}
\bar{b}_1 \\ \bar{c}_1
\end{pmatrix} = 2a_{n}\gamma_*(t)+\begin{pmatrix}
b_{n} \\ c_{n}
\end{pmatrix}
\end{equation*}
for every $t\in(-\delta,\delta)$ (note that $\gamma_*$ preserves $\mathcal{H}^1$-null sets and the above expression is continuous in $t$). Since $\gamma_*$ is Lipschitz, we can differentiate the above identity for almost every $t$, which yields after multiplication with $Q_1$
\begin{equation}\label{e.compcomp}
\begin{pmatrix} -a_{j_1}-a_n &0 \\ 0 &a_{j_1}-a_n\end{pmatrix}Q_1\gamma^{\prime}_*(t)=0.
\end{equation}
Non-flatness implies that there exist two linearly independent vectors $v_1,v_2\in\R^2$ and points $t_1,t_2$ satisfying \eref{compcomp} such that $\gamma_*^{\prime}(t_i)=v_i$ for $i=1,2$. Testing those $t_i$ yields $a_{j_1}=a_n$ and $a_{j_1}=-a_n$. Consequently we have $a_{j_1}=0$.

Now fix any connected component $I_{j_0}$ of $\{x\,|\,\LL(x)\neq\LL^1\}$ with $j_0\neq j_1$. First we construct a finite sequence of connected components $(I_{j_m})_m$ and $(K_{n_m})_m$ which join $I_{j_1}$ and $I_{j_0}$ in a suitable sense. Fix a point $p_{j_0}$ in the interior of $I_{j_0}$ and choose $x_1\in\R^2$ by requiring
\begin{equation*}
x_1 \in {\rm argmin}\{|x-p_{j_0}|:\,x\in \overline{I_{j_1}}\}.
\end{equation*}
Note that $x_1\neq p_{j_0}$ and therefore $x_1\in\partial I_{j_1}$. By Lipschitz regularity there exists a unique connected component $K_{n_1}$ of the set $\{x\,|\,\LL(x)=\LL^1\}$ such that $x_1\in\partial K_{n_1}$. Then define $z_1\in\R^2$ by
\begin{equation*}
z_1\in{\rm argmin}\{|z-p_{j_0}|:\,z\in\overline{K_{n_1}}\}.
\end{equation*}
Again $z_1\neq p_{j_0}$ and thus $z_1\in\partial K_{n_1}$ and there exists a unique connected component $I_{j_2}$ of the set $\{x\,|\,\LL(x)\neq\LL^1\}$ such that $z_1\in\partial I_{j_2}$. Continuing this procedure we claim that $x_{m_0}=p_{j_0}$ for some $m_0\in\mathbb{N}$ (in which case we stop the algorithm). Indeed, first note that by construction for all $m\geq 2$ we have
\begin{equation*}
|z_{m}-p_{j_0}|\leq|x_m-p_{j_0}|\leq |z_{m-1}-p_{j_0}|\leq |x_{m-1}-p_{j_0}|.
\end{equation*}
In particular $(x_m,z_m)$ is a bounded sequence. Since by Lipschitz regularity the number of connected components is locally finite, both sequences $|x_m-p_{j_0}|$ and $|z_m-p_{j_0}|$ are finally constant, that is, there exists $m_0\in\mathbb{N}$ such that $|x_m-p_{j_0}|=|z_m-p_{j_0}|=\eta$ for some $\eta\geq 0$ and all $m\geq m_0$. Now assume by contradiction that $\eta>0$. Observe that $x_{m}\in\partial I_{j_m}\cap\partial K_{n_m}$, so that by Lipschitz regularity, for $r=r(x_m)>0$ small enough, we can write 
\begin{equation*}
B_r(x_{m})=\Big(B_r(x_{m})\cap\overline{I_{j_m}}\Big)\cup \Big(B_r(x_{m})\cap \overline{K_{n_m}}\Big).
\end{equation*} 
Thus in one of the two sets $\overline{I_{j_m}}$ or $\overline{K_{n_m}}$ we can find an element $x$ such that $|x-p_{j_0}|<|x_{m}-p_{j_0}|=\eta$, which contradicts the definition of $x_m$ and $z_m$. Consequently $x_{m_0}=p_{j_0}$ as claimed. 

Having at hand the auxiliary components $I_{j_m}$ and $K_{n_m}$ we now prove iteratively that $a_{j_m}=a_{j_1}$ for all $m\leq m_0$ which yields the claim. First note that $x_1\in\partial I_{j_1}\cap\partial K_{n_1}$, so that there exists a Lipschitz curve $\gamma_1:(-\delta_1,\delta_1)\to\R^2$ such that $\gamma_1((-\delta_1,\delta_1))\subset\partial I_{j_1}\cap\partial K_{n_1}$. Since $a_{j_1}=0$, on some small ball $B_{r_1}(x_1)$ the function $\psi\in H^2_{\rm loc}(\R^2)$ solves the problem 
\begin{equation}\label{e.localPDE}
\begin{cases}
\triangle \psi = 0 &\mbox{on $B_{r_1}(x_1)$,} \\
\psi = b_{j_1}y_1+c_{j_1}y_2+d_{j_1}&\mbox{in $B_{r_1}(x_1)\cap I_{j_1}$.}\\
\end{cases}
\end{equation}
Thus by uniqueness $\psi = b_{j_1}y_1+c_{j_1}y_2+d_{j_1}$ on the connected set $K_{n_1}$ and the gradient is constant on the interior. We next transfer this information to $I_{j_2}$. To this end, observe that $z_1\in\partial K_{n_1}\cap\partial I_{j_2}$ and thus we find a non-constant Lipschitz-curve $\gamma_2:(-\delta_2,\delta_2)\to \R^2$ such that $\gamma_2((-\delta_2,\delta_2))\subset\partial I_{j_2}\cap\partial K_{n_1}$. For $\nabla\psi$ we obtain the compatibility condition
\begin{equation*}
\begin{pmatrix}
b_{j_1} \\ c_{j_1}\end{pmatrix} = 2a_{j_2}\gamma_2(t)+\begin{pmatrix}
b_{j_2} \\ c_{j_2}\end{pmatrix}
\end{equation*}
for every $t\in (-\delta_2,\delta_2)$. This is possible only if $a_{j_2}=0$. Now we repeat this reasoning until $x_n=p_{j_0}$ and conclude that $a_{j_0}=0$. 

We are now in the position to conclude assuming (C3-a). From the above argument we deduce that $a_j=0$ for all $j$. Hence equation \eqref{eq:THEPDE} implies that $\psi$ is harmonic on $\R^2$ and linear on an open set. Thus $\psi$ is globally linear and therefore integrating over $Q$ and taking the expectation of the equation $0=\nabla^2\psi = M+\nabla v$ yields first $M=0$ and then $\nabla v=0$ without the expectation.

Finally we treat Assumption~(C3-b). In this case we can use the same arguments except that in general $a_{j_1}\neq 0$. However, at all boundaries of the connected components $K_n$ of the set $\{x\,|\,\LL(x)=\LL^1\}$ the solution $\psi$ is explicit and the gradients are of the form \eref{formulagradient} for some $a_{j_n}$ and with $Q_1=I$. One then easily obtains the compatibility conditions $a_{j_1}=a_j$ for all $j$, so that the function $\nabla^2\psi=M+\nabla v$ is given by
\begin{equation*}
\nabla^2 \psi = \chi \begin{pmatrix} -2a_1 &0 \\ 0 &2a_1\end{pmatrix}+(1-\chi)\begin{pmatrix} 2a_1 &0 \\ 0 &2a_1\end{pmatrix} = \begin{pmatrix} 2a_1(1-2\chi) &0 \\ 0 &2a_1\end{pmatrix}.
\end{equation*}
By the ergodic theorem the expectation of the RHS matrix agrees with $M$ and so it can have at most rank 1. Note that $\nabla^2\psi$ is stationary and the entry $\partial_2^2\psi$ is invariant under spatial shifts. Ergodicity implies that $a_1$ is deterministic. Thus either $a_1=0$, $M=0$ and $\nabla v = 0$ as before, or, if $a_1\neq 0$, then $\expec{\chi}=\frac{1}{2}$ and $M= \kappa e_2\otimes e_2$ with $\kappa = 2a_1$ as claimed.
\end{proof}

\subsection{Proof of \pref{lambdaA}: Assumption~\ref{a.non-neg}}

We split the proof into two steps, first prove $\Lambda_6>0$, and then $\Lambda_4>0$.

\medskip

\noindent \textit{Step~1.} Proof of $\Lambda_6>0$.

\noindent We argue by contradiction and assume that $\Lambda_6=0$. 
Consider $\nabla v_n$ a minimizing sequence of stationary fields of $\R^2$ with $\expec{\int_Q |\nabla v_n|^2}=1$ such that
\begin{equation}\label{e.ass-to-zero}
\lim_{n\to \infty}\expec{\int_Q \nabla v_n\cdot \LL\nabla v_n}\,=\,0.
\end{equation}
By \lref{weaktozero} with $M=0$, $\nabla v_n\rightharpoonup 0$ in $L^2(\Omega\times Q)$,
and it remains to argue that the convergence is strong to get a contradiction.
To this aim, we shall prove that if \eqref{e.ass-to-zero} holds, then $(1-\chi)\nabla v_n$ converges strongly in $L^2(\Omega\times Q)$ to zero (recall that $\chi$ is the indicator function of the set $\{x\,|\,\LL(x)=\LL^1\}$).
Integrating \eqref{e.lowerbound} over the unit cube $Q$ yields in view of \eqref{e.ass-to-zero}
$$
\lim_{n\to \infty} \expec{{\int_{Q} (1-\chi) \Big(\big| \partial_2 v^1_n \big|^2+\big|\partial_1 v^2_n  \big|^2\Big)}}=0.
$$
Hence it is enough to prove that  $(1-\chi)\partial_1 v^1_n$ and
$(1-\chi) \partial_2 v^2_n$ converge strongly to zero in $L^2(\Omega\times Q)$ as well.
Since the argument is the same for both terms, we only treat $\partial_1 v^1_n$, and we follow the beginning of the argument of Step~2 in the proof of  \cite[Theorem~2.9]{Briane-Francfort-15}. Let $Z=\{\chi=1\}$ denote the (random) set of inclusions in $\R^2$. By assumption (A3), there exists $C<\infty$ such that for all $R\gg 1$ and
$Q_R=[-\frac R2,\frac R2)^d$, Ne\v{c}as' inequality \eqref{e.Neceq} holds in the form
\begin{equation}\label{uniKorn}
\int_{Q_R\setminus Z} \big|\partial_1 v^1_n \big|^2 \,\le \, \int_{M_R} \big| \partial_1 v^1_n \big|^2 \, \le \, C \| \nabla \partial_1 v^1_n \|_{H^{-1}(M_R)}^2
+C\frac{1}{|M_R|}\Big(\int_{M_R} \partial_1 v^1_n \Big)^2.
\end{equation}
We start with the control of the $H^{-1}$-norm, and appeal to \eqref{e.lowerbound} in the form
\begin{eqnarray*}
\lefteqn{ \| \nabla \partial_1 v^1_n \|_{H^{-1}(M_R)}^2 }
\\
&=& \|    \partial_1 \partial_1 v^1_n\|_{H^{-1}(M_R)}^2
 + \|  \partial_2 \partial_1 v^1_n \|_{H^{-1}(M_R)}^2
 \\
&\le&2 \|  \partial_1 (\partial_1 v^1_n - \partial_2 v^2_n)\|_{H^{-1}(M_R)}^2+ 2\|  \partial_2 \partial_1 v^2_n \|_{H^{-1}(M_R)}^2
+ \|   \partial_1 \partial_2 v^1_n \|_{H^{-1}(M_R)}^2
\\
&\le &2\|\partial_1 v^1_n -\partial_2 v^2_n \|_{L^2(M_R)}^2+ 2\|   \partial_1 v^2_n \|_{L^2(M_R)}^2
+ \|  \partial_2 v^1_n \|_{L^2(M_R)}^2
\\
&\le & \frac2\alpha \int_{Q_R\setminus Z} \nabla v_n\cdot \LL\nabla v_n+4\,\mu_1 \det \nabla v_n + 4\int_{Q_R^1\setminus Q_R}|\nabla v_n|^2
\\
&\le &  \frac2\alpha \int_{Q_R} \nabla v_n\cdot \LL\nabla v_n+4\,\mu_1 \det \nabla v_n + 4\int_{Q_R^1\setminus Q_R}|\nabla v_n|^2,
\end{eqnarray*}
by non-negativity of the first integrand.
Next we bound the last term in \eqref{uniKorn}, and note that
\begin{equation*}
\frac{1}{|M_R|}\Big(\int_{M_R}  \partial_1 v^1_n \Big)^2 \, \le \, \frac{2}{|Q_R\setminus Z|}\Big(\int_{Q_R\setminus Z} \partial_1 v^1_n \Big)^2+\frac{2}{|Q_R\setminus Z|}\Big(\int_{Q^1_R\setminus Q_R} | \partial_1 v^1_n|\Big)^2.
\end{equation*}
We then appeal to the ergodic theorem, which yields almost surely
\begin{eqnarray*}
\lim_{R\uparrow \infty} \frac1{R^d}\int_{Q_R\setminus Z} \big| \partial_1 v^1_n \big|^2&=& \expec{\int_{Q}(1-\chi) \big| \partial_1 v^1_n \big|^2},
\\
\lim_{R\uparrow \infty} \frac1{R^d} \int_{Q_R} \nabla v_n\cdot \LL\nabla v_n+4\,\mu_1 \det \nabla v_n &=& \expec{\int_{Q} \nabla v_n\cdot \LL\nabla v_n+4\,\mu_1 \det \nabla v_n}
\\
&\stackrel{\eqref{a+4}}{=}& \expec{\int_{Q} \nabla v_n\cdot \LL\nabla v_n},
\\
\lim_{R\uparrow \infty} \frac1{R^d} \int_{Q_R\setminus Z}  \partial_1 v^1_n &=&\expec{\int_Q (1-\chi) \partial_1 v^1_n },
\end{eqnarray*}
and
\begin{eqnarray*}
\lim_{R\uparrow \infty} \frac1{R^d} \int_{Q^1_R\setminus Q_R} | \partial_1 v^1_n|&=& \lim_{R\uparrow \infty} \frac1{R^d} \int_{Q^1_R\setminus Q_R} |\nabla v_n|^2 \;=\; 0,
\\
\lim_{R\uparrow \infty}\frac{|Q_R\setminus Z|}{R^d}&=&\expec{\int_Q(1-\chi)} \;>\;0,
\end{eqnarray*}
where in the fourth equality we used that $\lim_{R\uparrow \infty}|Q^1_R|/|Q_R|= 1$. Combined with \eqref{uniKorn}, these five convergences and the previous estimate imply
\begin{equation*}
 \expec{\int_{Q}(1-\chi) \big|\partial_1 v^1_n \big|^2}\,\le\,  \frac{2C}\alpha \expec{\int_{Q} \nabla v_n\cdot \LL\nabla v_n}
 +C\Big(\expec{\int_Q (1-\chi)  \partial_1 v^1_n}\Big)^2.
\end{equation*}
By the weak convergence of $\nabla v_n$ to zero and \eqref{e.ass-to-zero}, the two RHS terms vanish in the limit $n \uparrow \infty$, so that
\begin{equation*}
\lim_{n\uparrow \infty} \expec{\int_{Q}(1-\chi) \big|\partial_1 v^1_n \big|^2}\,=\,0,
\end{equation*}
as claimed. The same result holds for $\frac{\partial v^2_n}{\partial y_2}$ and we have thus proved
\begin{equation}\label{e.nablav_n1}
\lim_{n\uparrow \infty}   \expec{\int_{Q}(1-\chi) |\nabla v_n|^2}\,=\,0.
\end{equation}

\medskip

We are in the position to conclude. Since $(1-\chi)\det \nabla v_n\to 0$ in $L^1(\Omega \times Q)$ by  \eqref{e.nablav_n1} and $\expec{\int_Q\det \nabla v_n}\equiv 0$
by \eqref{a+4}, 
we have $\expec{\int_Q \chi \det \nabla v_n}\to 0$. 
Subtracting twice this quantity to the following consequence of \eqref{e.lowerbound}
$$
\expec{\int_Q \chi \Big(  (\partial_1 u_n^1 + \partial_2 u_n^2)^2+( \partial_2 u_n^1-\partial_1 u_n^2)^2 \Big) }\to 0
$$
yields 
$$
\lim_{n\uparrow \infty}\expec{\int_Q \chi |\nabla v_n|^2 }\to 0.
$$
Combined with \eqref{e.nablav_n1}, this implies
$\lim_{n\uparrow \infty} \expec{\int_Q |\nabla v_n|^2 }= 0$, which contradicts the assumption $\expec{\int_Q |\nabla v_n|^2}=1$.

\medskip

\noindent \emph{Step 2.} Proof of $\Lambda_4>0$. 

\noindent By Step~1 and  \tref{alaGMT}, for all $M\in \R^{2\times2}$
\begin{equation*}
M \cdot \LL_*M\,:=\,\inf_{u\in\Hhr^1_1}\expec{\int_Q (M+\nabla u)\cdot \LL(M+\nabla u)}.
\end{equation*}
Assume that there exists a rank-one matrix $M$ such that $M\cdot\LL_*M=0$.
We shall then prove that $M$ necessarily vanishes. 
If $M\cdot\LL_*M=0$, there exists some minimizing sequence $u_n\in\Hhr^1_1$ such that
\begin{equation}\label{e.minseqrand}
\lim_n  \expec{\int_Q (M+\nabla u_n) \cdot \LL(M+\nabla u_n)}\,=0.
\end{equation}
Let us prove that $\nabla u_n$ is bounded in $L^2(\Omega\times Q)$.
Indeed, if $\expec{\int_Q |\nabla u_n|^2} \to \infty$, then 
\begin{equation*}
v_n:=\frac1{\expec{\int_Q |\nabla u_n|^2}^\frac12} u_n
\end{equation*}
is well-defined for $n$ large enough and satisfies $\expec{\int_Q |\nabla v_n|^2}=1$. It then follows from \eqref{e.minseqrand} that
\begin{equation*}
\lim_n \expec{\int_Q \nabla v_n \cdot \LL \nabla v_n}\,=\,0,
\end{equation*}
which contradicts the fact that $\Lambda_6>0$. Therefore $\nabla u_n$ is bounded in $L^2(\Omega\times Q)$, and we are in the position to apply \lref{weaktozero} to $M$ and $\nabla u_n$. The latter then yields $M=0$, and therefore $\Lambda_4>0$.

%%%%%%%%%%%%%%%%%%%%%%%%%%%%%%%%%%%%%%%%%%
%%%%%%%%%%%%%%%%%%%%%%%%%%%%%%%%%%%%%%%%%%

\subsection{Proof of \pref{lambdaB}: Assumption~\ref{b.non-neg}} 

We split the proof into two steps, first prove $\Lambda_6>0$, and then $\Lambda_4>0$.

\medskip

\noindent \textit{Step~1} Proof of $\Lambda_6>0$.

\noindent As in the corresponding part of the proof of \pref{lambdaA}, we argue by contradiction and suppose that there is a sequence $v_n\in H^1_{\rm per}(Q,\R^2)$ such that $\int_Q|\nabla v_n|^2=1$ and
\begin{equation}\label{e.per-assump}
\lim_{n\to \infty}\int_Q\nabla v_n\cdot \LL\nabla v_n = 0.
\end{equation}
By \lref{weaktozero}, $\nabla v_n$ converges weakly to $0$ in $L^2(Q)$ and we shall obtain a contradiction 
by proving the strong convergence of $\nabla v_n$ to $0$. Again \eqref{e.lowerbound} combined with \eqref{e.per-assump} imply that on the set $\{x\,|\,\LL(x)\neq\LL^1\}$ the functions $\partial_2 v^1_n$ and $\partial_1 v^2_n$ converge to zero strongly in $L^2(Q)$. For the function $\partial_1 v_n^1$ we use Ne\v{c}as' inequality \eqref{e.Neceq} on each connected component $I_j$ of the set $\{x\,|\,\LL(x)\neq\LL^1\}$ that intersects $Q$. Since in each periodic cell we only have finitely many connected components, there is a uniform constant $C$ such that
\begin{equation*}
\int_{I_j}\big|\partial_1 v_n^1\big|^2\leq C\|\nabla \partial_1 v_n^1\|^2_{H^{-1}(I_j)}+C\Big(\int_{I_j} \partial_1 v_n^1 \Big)^2.
\end{equation*}
Arguing as in the proof of \pref{lambdaA}, we may control the $H^{-1}(I_j)$-norm of $\partial_1 v^1_n$ by 
\begin{equation*}
\| \nabla \partial_1 v^1_n\|_{H^{-1}(I_j)}^2 
\leq  \frac{2}{\alpha} \int_{I_j} \nabla v_n\cdot \LL\nabla v_n+4\,\mu_1 \det \nabla v_n.
\end{equation*}
Summing over $j$ we obtain
\begin{equation*}
\int_{\bigcup_j I_j}\big|\frac{\partial v_n^1}{\partial y_1}\big|^2\leq C \int_{Q} \nabla v_n\cdot \LL\nabla v_n+4\,\mu_1 \det \nabla v_n+C\sum_j\Big(\int_{I_j}\frac{\partial v_n^1}{\partial y_1}\Big)^2.
\end{equation*}
The first term on the RHS converges to zero by assumption \eqref{e.per-assump}, while the second term vanishes by the weak convergence $\nabla v_n\rightharpoonup 0$ since the sum is finite. The same argument can be repeated for the function $\frac{\partial v_n^2}{\partial y_2}$, so that $\nabla v_n$ converges to zero strongly on $\{x\,|\,\LL(x)\neq\LL^1\}$. 
We then conclude as in Step~1 of the proof of \pref{lambdaA}, obtain that $\nabla v_n\to 0$ strongly in $L^2(Q)$, which contradicts the normalization assumption.

\medskip

\noindent \textit{Step~2.} Proof of $\Lambda_4>0$.

\noindent  The proof of $\Lambda_4>0$, which solely relies on $\Lambda_6>0$,  \tref{alaGMT}, and  \lref{weaktozero},  is the same as for  \pref{lambdaA}. In particular, it also yields the desired implication $\Lambda_6>0\implies \Lambda_4>0$ in the random setting.

%%%%%%%%%%%%%%%%%%%%%%%%%%%%%%%%%%%%%%%%%%
%%%%%%%%%%%%%%%%%%%%%%%%%%%%%%%%%%%%%%%%%%

\subsection{Proof of \pref{lambdaC}: Assumption~\ref{c.non-neg}} 

We first prove the implication $\Lambda_6>0\implies\Lambda_4>0$ under assumption (C3-a) and then the additional property $\Lambda_6>0$ when $\LL$ is periodic. As a final step we treat the assumption (C3-b).

\medskip

\noindent \textit{Step~1} Proof of the implication $\Lambda_6>0\implies\Lambda_4>0$.

\noindent The argument is identical to {\it Step 2} in the proof of \pref{lambdaA} as it is only based on the fact $\Lambda_6>0$, \tref{alaGMT} and \lref{weaktozero}

\medskip

\noindent \textit{Step~2.} Proof of $\Lambda_6>0$ in the periodic case.

\noindent The argument is almost identical to the corresponding one for \pref{lambdaB}. Again we argue by contradiction and suppose that there is a sequence $v_n\in H^1_{\rm per}(Q,\R^2)$ such that $\int_Q|\nabla v_n|^2=1$ and
\begin{equation}\label{e.per-assumpC}
\lim_{n\to \infty}\int_Q\nabla v_n\cdot \LL\nabla v_n = 0.
\end{equation}
By \lref{weaktozero}, $\nabla v_n$ converges weakly to $0$ in $L^2(Q)$. We argue that $\nabla v_n$ to $0$ strongly in $L^2(Q)$. Note that \eqref{e.lowerbound} combined with \eqref{e.per-assumpC} imply that on the set $\{x\,|\,\LL(x)\neq\LL^1\}$ the functions $\partial_2 v^1_n$ and $\partial_1 v^2_n$ converge to zero strongly in $L^2(Q)$. For the function $\partial_1 v_n^1$ we want again use Ne\v{c}as' inequality \eqref{e.Neceq}. However now the connected components {may be unbounded. We then apply \lref{surgery} to truncate each connected component $I_j$ in such a way that the new set $I_j^{\prime}$ is bounded with Lipschitz boundary and agrees with $I_j$ on $Q$. In particular, it has finitely many connected components that we denote by $I^{\prime}_{jk}$.}  Hence, by Ne\v{c}as' inequality {on each $I_{jk}$}, there is a uniform constant $C$ such that
\begin{equation*}
\int_{I^{\prime}_{{jk}}}\big|\partial_1 v_n^1\big|^2\leq C\|\nabla \partial_1 v_n^1\|^2_{H^{-1}(I^{\prime}_{{jk}})}+C\Big(\int_{I^{\prime}_{{jk}}} \partial_1 v_n^1 \Big)^2.
\end{equation*}
Now the arguments are the same as for \pref{lambdaB} and we leave the details to the reader.

\medskip

\noindent \textit{Step~3.} Proofs under assumption (C3-b).

\noindent We start with the case $\theta_1=\frac12$. Then we consider the matrix $M=e_2\otimes e_2$ and the non-zero field $U\in\Hhr^0_1$ defined by 
\begin{equation*}
U(x,\omega)=(1-2\chi(x_1,\omega))e_1\otimes e_1.
\end{equation*}
A straightforward calculation based on \eqref{eq:cond} and \eqref{a+5} yields 
\begin{equation*}
\expec{\int_Q (M+U)\cdot\LL (M+U)}=\expec{\int_Q \chi 4\mu_1+(1-\chi)4(\mu_2+\lambda_2)}=\theta_1 4\mu_1+(1-\theta_1)4(\lambda_2+\mu_2)=0.
\end{equation*}
To conclude that $\Lambda_4=0$, it suffices to find a sequence $v_n\in\Hhr^1_1$ such that $\nabla v_n\to U$ in $L^2(Q\times \Omega)$. Since this property is well-known, we just sketch the argument: for $T\gg 1$ we consider the weak PDE formulation
\begin{equation}\label{e.weakinprob}
\expec{\int_Q \frac{1}{T}v\cdot\varphi+(\nabla v-U)\cdot\nabla\varphi}=0
\end{equation} 
for all $\varphi\in\Hhr^1_1$. By the Lax-Milgram theorem there exists a unique solution $v^T\in\Hhr^1_1$ to the above problem. Testing the equation with $\varphi = v^T$, we deduce that $T^{-\frac{1}{2}}v^T$ and $\nabla v^T$ are bounded sequences in $L^2(Q\times\Omega)$. Thus we can assume that $\nabla v^T\rightharpoonup Z$ for some $Z\in L^2(Q\times\Omega)$ (so that $\expec{\int_Q Z}=0$) and moreover we can pass to the limit in the equation to deduce that for all $\varphi\in\Hhr^1_1$
\begin{equation*}
\expec{\int_Q (Z-U)\cdot\nabla\varphi}=0.
\end{equation*}
Arguing as for \eqref{a+1}, on the one hand we know that ${\rm curl}\,Z={\rm curl}\,U=0$ almost surely. On the other hand, the above equality implies that ${\rm div}(Z-U)=0$ almost surely by the same reasoning. Combining the div-curl-Lemma on $Q$ and the ergodic theorem for the stationary function $Z-U$, we deduce
\begin{align*}
\expec{\int_Q|Z-U|^2}&=\lim_{R\uparrow \infty}\frac{1}{R^d}\int_{Q_R}|Z-U|^2=\lim_{\e\downarrow 0}\int_Q|Z(x/\e,\omega)-U(x/\e,\omega))|^2
\\
&=\int_Q\Big|\expec{\int_Q Z-U}\Big|^2\,=\,0,
\end{align*}
where we used that $\expec{\int_Q U}=\expec{\int_Q Z}=0$. This shows that $Z=U$ in $L^2(Q\times \Omega)$.
We finish the argument proving strong convergence of $\nabla v^T$. Testing \eref{weakinprob} with $v^T$ itself, we obtain from the weak convergence $\nabla v^T\rightharpoonup U$, the weak lower-semicontinuity of the norm, and the equation 
\begin{equation*}
\expec{\int_Q |U|^2}\le \liminf_{T\to\infty}\expec{\int_Q\left|\nabla v^T\right|^2}\leq \lim_{T\to\infty}\expec{\int_Q U\cdot\nabla v^T}=\expec{\int_Q|U|^2}.
\end{equation*}
The strong convergence is now a consequence of the Hilbert space structure of $L^2(Q\times\Omega,\R^{2\times 2})$.

In the case $\theta_1\ne\frac{1}{2}$, the implication $\Lambda_6>0\implies\Lambda_4>0$ is again a simple consequence of \tref{alaGMT} and \lref{weaktozero} (see again {\it Step 2} in the proof of \pref{lambdaA} for the details).

\subsection{Proof of \cref{hom-homC}: Assumption~(C3-b)}
The results under assumption (C3-a) are a direct consequence of \tref{alaGMT} and \pref{lambdaC}. However, given assumption (C3-b), the positivity of $\Lambda_6$ is not known in the random setting, so we prove the statements directly. First note that the case of volume fraction $\theta_1=\frac12$ was implicitly proven in \pref{lambdaC}, where we showed that $\LL_*$ loses ellipticity in the direction $M=e_2\otimes e_2$. Hence it remains to show that $\LL_*$ is strictly strongly elliptic whenever $\theta_1\neq\frac12$. We argue by contradiction. Due to \pref{LAMBDA} and \rref{BraidesBriane} homogenization holds with $\LL_*$ being characterized by the formula \eref{asfo-JKO}, so let us assume that there exists a rank one matrix $M=a\otimes b$ and a sequence $v_n\in\Hhr^1_1$ such that, taking into account \eqref{a+2},
\begin{equation}\label{e.lossofellipticity}
\lim_{n\to\infty}\expec{\int_Q (M+\nabla v_n)\cdot\LL(M+\nabla v_n)+4\mu_1\det(M+\nabla v_n)}=0.
\end{equation}  
We denote by $\mathcal{L}_M(\nabla v)$ the functional in the above equation. We shall construct a matrix-field with the same energy that has a one-dimensional profile. Given $k\in\mathbb{N}$ define $w^k_n:(0,1)\times\Omega\to\R^{2\times 2}$ as
\begin{equation*}
w_n^k(x_1,\omega)=\frac{1}{2k}\int_{-k}^k\nabla v_n(x_1,x_2,\omega) dx_2.
\end{equation*}
Since the integrand in \eref{lossofellipticity} is convex in the argument $M+\nabla v_n$, stationarity, Fubini's theorem and Jensen's inequality imply that for all $k$
\begin{align*}
\mathcal{L}_M(\nabla v_n)=\expec{\int_0^1\frac{1}{2k}\int_{-k}^k(M+\nabla v_n)\cdot\LL(M+\nabla v_n)+4\mu_1\det(M+\nabla v_n)dx_2dx_1}\geq\mathcal{L}_M(w_n^k),
\end{align*}
where we used that $\LL$ depends only on $x_1$. Moreover, for fixed $n$, due to stationarity of $\nabla v_n$ and Fubini's theorem, the sequence $w_n^k$ is bounded in $L^2((0,1)\times \Omega)$, so that without loss of generality there exists a weak limit $w_n\in L^2((0,1)\times\Omega)$. Using again convexity of $\mathcal{L}_M$, weak lower semicontinuity yields $\mathcal{L}_M(\nabla v_n)\geq\mathcal{L}_M(w_n)$. Next we identify some of the components of $w_n$. To this end, let $\eta_k$ be a smooth cut-off function such that $\eta_k\equiv 1$ on $[-k,k]$ with support in $[-k-1,k+1]$ and $\|\eta^{\prime}_k\|_{\infty}\leq 2$. Then by stationarity of $v_n$ and $\nabla v_n$
\begin{align*}
\expec{\int_0^1\left|(w^k_n)_{12}\right|^2}&\leq \expec{\int_0^1 2\left|\frac{1}{2k}\int_{\R}\eta_k(x_2)\partial_2v_n^1\right|^2+\frac{1}{k^2}\left(\int_{-k-1}^{-k}|\partial_2v_n^1|^2+\int_{k}^{k+1}|\partial_2v_n^1|^2\right)}
\\
&\leq 2\expec{\int_0^1\left|\frac{1}{2k}\int_{\R}\eta_k^{\prime}(x_2)v_n^1\right|^2}+\frac{2}{k^2}\expec{\int_Q|\partial_2v_n^1|^2}\leq \frac{4}{k^2}\expec{\int_Q|v_n^1|^2+|\partial_2 v_n^1|^2}.
\end{align*}
Hence $(w_n^k)_{12}\to 0$ strongly in $L^2((0,1)\times\Omega)$. A similar calculation holds for the component $(w_n^k)_{22}$, so that the weak limit $w_n$ has non-zero entries only in its first column.  This structure combined with the lower bound \eref{lowerbound} yields
\begin{align*}
\mathcal{L}_M(\nabla v_n)\geq\mathcal{L}_M(w_n)
\geq& \alpha\expec{\int_Q\chi (M_{11}+(w_n)_{11}+M_{22})^2+\chi(M_{12}-M_{21}-(w_n)_{21})^2}
\\
&+\alpha\expec{\int_Q(1-\chi) \left((M_{11}+(w_n)_{11}-M_{22})^2+M_{12}^2+(M_{21}+(w_n)_{21})^2\right)}.
\end{align*}
Since the LHS converges to zero when $n\to\infty$, we infer on the one hand that $w_n$ converges strongly to some matrix-field $w\in L^2((0,1)\times\Omega)$ which is nonzero only in the first column. On the other hand, $M_{12}=0$ and therefore $M_{11}M_{22}=0$ due to the rank one assumption. Using this property, the limit component $w_{11}$ can be rewritten as
\begin{equation*}
w_{11}=-\chi(M_{11}+M_{22})+(1-\chi)(M_{22}-M_{11})=-M_{11}+(1-2\chi)M_{22}
\end{equation*}
Since by construction $\expec{\int_Qw}=0$, this equality and the fact that $\theta_1\neq\frac12$ imply $M_{11}=M_{22}=0$. For the last component, we note that
\begin{equation*}
w_{21}=-\chi M_{21}-(1-\chi)M_{21}=-M_{21},
\end{equation*}
so again integrating over $Q$ and taking the expectation shows that $M_{21}=0$. Thus $M=0$ and we reached a contradiction.
\appendix
\section{Ne\v{c}as inequality}
We quickly argue that under appropriate geometric properties on the microstructure, assumption (A3) holds true.
\begin{lemma}\label{l.example}
In addition to conditions (A1) and (A2), assume that there exist finitely many open, connected sets $S_1,\dots,S_N\subset\R^2$ with Lipschitz boundary such that for each connected component $I_j$ of $\{x\,|\,\LL(x)=\LL^1\}$ there exist a rigid motion $Q_i$ and $j(i)\in\{1,\dots,N\}$ such that $I_i=Q_iS_{j(i)}$. Moreover assume that the connected components are well-separated in the sense that 
\begin{equation}
\dist(I_i,I_j)\geq c>0\quad\quad \text{for all }i\neq j.  
\end{equation} 
Then the Ne\v{c}as inequality (A3) holds.
\qed
\end{lemma}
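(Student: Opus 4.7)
\medskip

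\noindent \textit{Plan of proof.}
The plan is to reduce the Ne\v{c}as inequality on the perforated domain $M_R$ to the Ne\v{c}as inequality on the filled cube $Q^1_R$ via the extension operator strategy outlined in \rref{onA3}. Recall that on a Lipschitz domain $\Omega$ the Ne\v{c}as inequality $\|u-\bar u\|_{L^2(\Omega)}^2\le C(\Omega)\|\nabla u\|_{H^{-1}(\Omega)}^2$ holds, and the constant is invariant under dilations. Applied to $Q^1_R$ this yields a constant independent of $R$. Hence it suffices to construct, under the assumptions of the lemma, a linear extension operator $\mathcal{E}_R:L^2(M_R)\to L^2(Q^1_R)$ such that $\mathcal{E}_R u=u$ almost everywhere on $M_R$ and such that
\begin{equation*}
\|\nabla \mathcal{E}_Ru\|_{H^{-1}(Q^1_R)}\,\le \,C\|\nabla u\|_{H^{-1}(M_R)}
\end{equation*}
with $C$ independent of $R$. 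The conclusion then follows from the estimate $\|u\|_{L^2(M_R)}\le \|\mathcal{E}_R u-c\|_{L^2(Q^1_R)}$ (for the $L^2(Q^1_R)$-mean $c$ of $\mathcal{E}_R u$, using that $\int_{M_R}u=0$ and that the constant minimizing the $L^2(M_R)$-norm is the mean), together with Ne\v{c}as on $Q^1_R$ applied to $\mathcal{E}_R u -c$.

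\medskip

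To construct $\mathcal{E}_R$, the first step is to work shape by shape. For each of the finitely many reference sets $S_1,\dots,S_N$, choose a bounded Lipschitz open neighborhood $\tilde S_j\supset \overline{S_j}$ so small that $\dist(\partial \tilde S_j,S_j)\le c/4$. Since $\tilde S_j\setminus \overline{S_j}$ is a bounded Lipschitz domain, a standard Jones/Stein-type extension argument yields a linear operator $E_j$ that extends functions from $\tilde S_j\setminus \overline{S_j}$ to $\tilde S_j$, is bounded from $L^2$ to $L^2$, and preserves boundary traces on $\partial S_j$ (so that $E_j u$ has no singular part on $\partial S_j$, and its weak gradient on $\tilde S_j$ is the distributional extension of the gradient on both sides). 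A duality argument then upgrades this to the bound
\begin{equation*}
\|\nabla E_j u\|_{H^{-1}(\tilde S_j)}\,\le\, C_j\|\nabla u\|_{H^{-1}(\tilde S_j\setminus \overline{S_j})}.
\end{equation*}

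\medskip

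In the second step, using that each inclusion $I_i$ is of the form $Q_iS_{j(i)}$ and that rigid motions preserve the relevant norms, we transport $E_{j(i)}$ to an extension operator $E_i$ from $\tilde I_i\setminus \overline{I_i}$ to $\tilde I_i:=Q_i\tilde S_{j(i)}$, with a constant bounded uniformly in $i$ by $\max_{j}C_j$. The well-separation hypothesis $\dist(I_i,I_j)\ge c$ guarantees (by the choice $\dist(\partial \tilde S_j,S_j)\le c/4$) that the collars $\tilde I_i$ are pairwise disjoint and contained in $M_R\cup \bigcup_i I_i\subset Q^1_R$ (here the definition $Q^1_R = (-R/2-2C_1,R/2+2C_1)^2$ and (A2) are what enforce that $\tilde I_i\subset Q^1_R$ for every $I_i$ intersecting $Q_R$). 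We then define $\mathcal{E}_R u$ to equal $u$ on $M_R$ and $E_i(u|_{\tilde I_i\setminus \overline{I_i}})$ on each $I_i$. Disjointness of the $\tilde I_i$ makes $\mathcal{E}_R$ well-defined, and the preservation of traces ensures that the weak gradient of $\mathcal{E}_R u$ on $Q^1_R$ has no singular part on any $\partial I_i$.

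\medskip

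The third and most delicate step is to verify the uniform $H^{-1}$-bound on the gradient of $\mathcal{E}_R u$. Since $\nabla \mathcal{E}_R u$ coincides with $\nabla u$ on $M_R$ and with $\nabla E_i(u|_{\tilde I_i\setminus \overline{I_i}})$ inside each $I_i$, and since the supports on the inclusion side live in the disjoint sets $I_i\subset \tilde I_i$, we test against an arbitrary $\varphi\in H^1_0(Q^1_R)$ and split the duality pairing into $\int_{M_R}\nabla u\cdot \varphi$ and $\sum_i\int_{I_i}\nabla E_i(u|_{\tilde I_i\setminus \overline{I_i}})\cdot \varphi$. The first term is controlled by $\|\nabla u\|_{H^{-1}(M_R)}\|\varphi\|_{H^1_0(Q^1_R)}$ after using that $\varphi|_{M_R}\in H^1_0(M_R)$ up to a traceless boundary term; the second, after localizing $\varphi$ to each $\tilde I_i$ by a cut-off $\eta_i\in C^\infty_c(\tilde I_i)$ equal to one on $I_i$, is controlled by $\|\nabla E_i\cdot\|_{H^{-1}(\tilde I_i)}\|\eta_i\varphi\|_{H^1_0(\tilde I_i)}$ and then by $C\|\nabla u\|_{H^{-1}(\tilde I_i\setminus \overline{I_i})}\|\varphi\|_{H^1(\tilde I_i)}$. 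Summing over $i$ using pairwise disjointness of the $\tilde I_i$ and Cauchy--Schwarz gives the desired bound with a constant depending only on the finitely many shapes $S_j$ and on $c$, and in particular independent of $R$.

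\medskip

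The main obstacle, as already signaled, is the third step: obtaining the gradient bound in the dual norm $H^{-1}$ rather than in $L^2$, which is sensitive to the continuity of the extension across $\partial I_i$ (forcing us to rely on trace-preserving extensions) and to a careful handling of localization cut-offs so as to avoid picking up an $\|u\|_{L^2}$ term that would not scale properly with $R$. The two structural hypotheses of \lref{example}, namely that there are only finitely many shapes (so constants in the extension theorem can be taken uniform) and that the inclusions are well-separated by a positive distance $c$ (so that the collars $\tilde I_i$ can be chosen pairwise disjoint with an $R$-independent collar width), are precisely what makes this gluing succeed.
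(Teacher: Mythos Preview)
Your overall strategy---build local $L^2$-extensions across each inclusion, glue them, and then verify the $H^{-1}$-gradient bound---is a plausible alternative to the paper's route, but as written it has a genuine gap in Step~3 and an under-justified claim in Step~1. In Step~3 you split $\langle \nabla\mathcal{E}_R u,\varphi\rangle$ into $\int_{M_R}\nabla u\cdot\varphi$ and $\sum_i\int_{I_i}\nabla E_i u\cdot\varphi$ and claim the first is controlled by $\|\nabla u\|_{H^{-1}(M_R)}\|\varphi\|_{H^1_0(Q^1_R)}$ ``using that $\varphi|_{M_R}\in H^1_0(M_R)$ up to a traceless boundary term''. But $\varphi|_{M_R}\notin H^1_0(M_R)$ (it has nonzero trace on each $\partial I_i$), so this pairing is not an $H^{-1}(M_R)$--$H^1_0(M_R)$ duality and cannot be bounded as you claim. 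The correct move is to decompose the \emph{test function} first: with your cut-offs, write $\varphi=(1-\sum_i\eta_i)\varphi+\sum_i\eta_i\varphi$; the first piece does lie in $H^1_0(M_R)$ after restriction, and the second pieces are localized to $\tilde I_i$. Only then do both terms become genuine dual pairings. Also, your Step~1 assertion that ``a duality argument upgrades'' the $L^2$-boundedness of a Stein/Jones extension to $\|\nabla E_ju\|_{H^{-1}(\tilde S_j)}\le C_j\|\nabla u\|_{H^{-1}(\tilde S_j\setminus\overline{S_j})}$ is not explained; this is true, but it requires combining the $L^2/\R$-boundedness of $E_j$ with the local Ne\v{c}as inequality on the collar $\tilde S_j\setminus\overline{S_j}$ (and the trace-preservation discussion is a red herring: for $L^2$ functions the distributional gradient in $H^{-1}$ is automatically well-defined with no ``singular part'').

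For comparison, the paper takes a different and somewhat cleaner path following Allaire: instead of extending $u$, it constructs a \emph{restriction} operator $\mathcal{H}_R:H^1_0(Q^1_R)^d\to H^1_0(M_R)^d$ that is uniformly bounded, equals the identity on $H^1_0(M_R)^d$, and---crucially---preserves the divergence. This is done by solving, on each collar $S^\delta=(S+B_\delta)\setminus S$ around a reference shape, a Stokes problem that replaces $\bar u$ by a vector field $v$ with the same divergence (up to a harmless constant) and zero trace on $\partial S$. Divergence preservation is exactly the property that makes the duality work: for $\varphi\in H^1_0(Q^1_R)^d$ one has $-\int_{M_R}u\,\mathrm{div}(\mathcal H_R\varphi)=\langle\nabla u,\mathcal H_R\varphi\rangle_{M_R}$, which is bounded by $\|\nabla u\|_{H^{-1}(M_R)}\|\varphi\|_{H^1_0(Q^1_R)}$, and this yields directly (via \cite[Proposition~1.1.4]{Allaire1991}) the extension $\mathcal{E}_R$ with the required $H^{-1}$-gradient bound. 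The finitely-many-shapes and well-separation hypotheses enter in the same way as in your sketch, to make the Stokes constants uniform and the collars disjoint.
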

\begin{remark}\label{r.uniformestimates}
The same conclusions hold if we allow for infinitely many different shapes $S_j$ provided all constants appearing in the proof below are uniform with respect to the inclusions. 
\qed
\end{remark}
\begin{proof}[Proof of \lref{example}]
We essentially follow the arguments of \cite{Allaire1991} and first construct a linear, continuous operator $\mathcal{H}_R:H^1_0(Q^1_R)^d\to H^1_0(M_R)^d$ with equibounded (in $R$) operator-norm that preserves the divergence and acts as the identity on $H^1_0(M_R)\subset H^1_0(Q^1_R)$. To this end, we first consider a fixed shape $S=S_j$. Then there exists $0<\delta<\min\{c/2, C_1\}$ such that the set $S^{\delta}=(S+B_{\delta})\setminus S$ has also Lipschitz boundary (here $C_1$ is given by assumption (A2)). On this ``safety zone'' we consider the following Stokes problem: Given $\bar{u}\in H^1(S+B_{\delta})$, find $v\in H^1(S^{\delta})^d$ and $q\in L^2(S^{\delta})$ with $\int_{S^{\delta}}q=0$ solving the stationary Stokes equation
\begin{equation}\label{e.stokes}
\begin{split}
-\triangle v+\nabla q = -\triangle \bar{u}\quad\quad\text{ on }S^{\delta};
\\
{\rm div}\,v = {\rm div}\,\bar{u} + \frac{1}{|S^{\delta}|}\int_{S}{\rm div}\,\bar{u}\,\mathrm{d}x\quad\quad\text{ on }S^{\delta};
\\
v = 0 \quad\quad\text{ on }\partial S,\quad\quad\quad v=\bar{u} \quad\quad\text{ on }\partial S^{\delta}\setminus\partial S.
\end{split}
\end{equation}
Using the divergence theorem,  the compatibility condition
\begin{align*}
\int_{S^{\delta}}{\rm div}\,v dx=\int_{\partial S^{\delta}} v \cdot n d\sigma 
\end{align*}
holds.
Such a system admits a unique weak solution $(v,q)$ which satisfies the a priori estimate
\begin{equation*}
\|v\|_{H^1(S^{\delta})}+\|q\|_{L^2(S^{\delta})}\leq C\big(\|-\triangle v+\nabla q\|_{H^{-1}(S^{\delta})}+\|{\rm div}\,v\|_{L^2(S^{\delta})}+\|v\|_{H^{\frac{1}{2}}(\partial S^{\delta})}\big),
\end{equation*}
where $C$ depends on $d$ and $S^{\delta}$. The first RHS term is bounded by $\|\nabla \bar{u}\|_{L^2(S^{\delta})}$, while, by Jensen's inequality, the second RHS term can be controlled by 
\begin{equation*}
\|{\rm div}\,v\|_{L^2(S^{\delta})}\leq \|\nabla \bar{u}\|_{L^2(S^{\delta})}+\Big(\frac{|S|}{|S^{\delta}|}\Big)^{\frac{1}{2}}\|\nabla \bar{u}\|_{L^2(S)}.
\end{equation*}
For the third RHS term, we introduce a cut-off function $\chi\in C^1(\R^d,[0,1])$ such that $\chi \equiv 0$ on $S^{\delta/4}$ and $\chi\equiv 1$ on $\R^d\setminus S^{3\delta/4}$, and $\|\nabla\phi\|_{\infty}\leq 4\delta^{-1}$. Then by trace estimates, there exists a constant $C(S,\delta)<\infty$ such that we have
\begin{equation*}
\|v\|_{H^{\frac{1}{2}}(\partial S^{\delta})}\leq C(S,\delta) \|\chi \bar{u}\|_{H^1(S^{\delta})}\leq C(S,\delta)\Big( \|\bar{u}\|_{H^1(S^{\delta})}+4d\delta^{-1}\|\bar{u}\|_{L^2(S^{\delta})}\Big).
\end{equation*}
We then conclude that there exists a (possibly different) constant $C(S,\delta)<\infty$ depending only on $S$ and $\delta$ such that
\begin{align}\label{e.apriori}
\|v\|_{H^1(S^{\delta})}+\|q\|_{L^2(S^{\delta})}\leq C(S,{\delta})\|\bar{u}\|_{H^1(S+B_{\delta})}.
\end{align}
We are now in the position to define the operator  $\mathcal{H}_R$. 
For all inclusions $I_i$, we write the associated rigid body motion as $Q_i:x\mapsto D_ix+c_i$ with $D_i\in SO(2)$. 
Let $u\in H^1_0(Q^1_R)^d$.
For all $i$ such that $I_i \subset Q^1_R$ we denote by $v_{i}\in H^1(S_{j(i)}^{\delta_i})$ the unique solution of the Stokes problem \eqref{e.stokes} on $S_{j(i)}$ with $\bar{u}=(D_i^{-1}u)\circ Q_{i}$. 
We then define
\begin{equation*}
\mathcal{H}_R(u)=
\begin{cases}
(D_i v_{i})\circ Q_i^{-1} &\mbox{if $x\in I_j+B_{\delta}$ for some $i$,}
\\
u &\mbox{otherwise in $M_R$.}
\end{cases}
\end{equation*}
Since \eqref{e.stokes} is a linear system, $\mathcal{H}_R$ is defined as a linear operator from $H^1_0(Q^1_R)^d$ to $H^1_0(M_R)^d$. In addition, by \eqref{e.apriori} and (A2) 
we infer that
\begin{equation*}
\|\mathcal{H}_R(u)\|^2_{H_0^1(M_R)}\leq \|u\|^2_{H_0^1(Q^1_R)}+\max_{1\leq i\leq N}C(S_i,{\delta_i})^2\sum_{I_{i}\subset Q^1_R}\|u\|^2_{H^1(I_{i}+B_{\delta})}
\leq C\|u\|^2_{H_0^1(Q_R^1)}
\end{equation*}
for some $C$ independent of $R$. Next, if $u=0$ on $\partial I_i$ for some  inclusion $I_i$, then by uniqueness of solutions
of the Stokes system, it follows that $v_{i}=\bar{u}$ on $S^{\delta}_{i}$. Hence $\mathcal{H}_R(u)=u$ whenever $u\in H^1_0(M_R)^d$. Finally, consider the case ${\rm div}\,u=0$. Since $D_i\in SO(2)$, we deduce that for $y=Q_i^{-1}x\in S_i^{\delta_i}$ it holds that
\begin{align*}
{\rm div_x}\,(D_iv_i(Q_i^{-1}x))&={\rm div_y}\,(D_i^{-1}D_iv(y))={\rm div_y}\,v_i(y)
\\
&={\rm div_y}\,(D_i^{-1}u(Q_iy))+\frac{1}{|S^{\delta_i}_i|}\int_{S_i}{\rm div_y}\,(D_i^{-1}u(Q_iy))\,\mathrm{d}y
\\
&={\rm div_x}\,u(x)+\frac{1}{|S^{\delta_i}_i|}\int_{I_i}{\rm div_x}\,u(x)\,\mathrm{d}x=0,
\end{align*}
which shows ${\rm div}\,\mathcal{H}_R(u)=0$. 
Hence the operator $\mathcal{H}_R$ defined above has the desired properties. 

\medskip

We now conclude.
Since $M_R$ is a bounded, open, connected set with Lipschitz boundary, \cite[Proposition 1.1.4]{Allaire1991} yields (after rescaling) an extension operator $\mathcal{E}_R:L^2(M_R)/\R\to L^2(Q^1_R)/ \R$ that satisfies
\begin{equation*}
\|\nabla\mathcal{E}_R(u)\|_{H^{-1}(Q^1_R)}\leq C\|\nabla u\|_{H^{-1}(M_R)} 
\end{equation*}
for some constant $C$ independent of $R$. 
This implies the desired inequality \eqref{e.Neceq} in (A3) for all $u\in L^2(M_R)/\R$ in the form
\begin{equation*}
\|u\|_{L^2(M_R)}\leq \|\mathcal{E}_R(u)\|_{L^2(Q^1_R)}\leq C \|\nabla \mathcal{E}_R(u)\|_{H^{-1}(Q^1_R)}\leq C\|\nabla u\|_{H^{-1}(M_R)},
\end{equation*}
where we used that Ne\v{c}as' inequality indeed holds on cubes: there exists $C<\infty$ such that for all $R>0$ and $w\in L^2(Q^1_R)$ we have
$$
\|w\|_{L^2(Q^1_R)}\leq C \|\nabla w\|_{H^{-1}(Q^1_R)}.
$$
\end{proof}

%%%%%%%%%%%%%%%%%%%%%%%%%%%%%%%%%%%%%
% Measurable selections (perhaps for the appendix)

\section{Measurable selections}\label{append:selection}

We briefly sketch the proof of the measurable selection results used in {\it Step 2} of the proofs of \lref{Lambda123} and \pref{order}. For $L>0$ we set $Y_L=\{u\in C^{\infty}(\R^d,\R^d):\,{\supp}(u)\subset \overline{B_L}\}$, which can be seen as a closed subspace of the Schwartz class. Hence $Y_L$ itself is a complete separable metric space. We now implicitly fix $L$ and a small number $\delta>0$.
Define the set-valued function $\Gamma:\Omega\to\mathcal{P}(Y_L)$ by
\begin{equation*}
\Gamma(\omega):=\Big\{u\in Y_L:\,\int_{\R^d}\nabla u\cdot\LL(x,\omega)\nabla u \,dx <g(\omega)\int_{\R^d}|\nabla u|^2 \,dx\Big\},
\end{equation*}
where the function $g:\Omega\to\R$ is defined by
\begin{equation*}
g(\omega)=\inf_{u\in Y_L}\frac{\int_{\R^d}\nabla u\cdot\LL(x,\omega)\nabla u\,dx}{\int_{\R^d}|\nabla u|^2\,dx}+\delta.
\end{equation*}
Note that by separability of $Y_L$ and joint measurability of $\LL$, the function $g$ is measurable since we can take the infimum in its definition over a countable set. Moreover, since $\delta>0$ we have $\Gamma(\omega)\neq\emptyset$ for all $\omega\in\Omega$. We argue that the graph of $\Gamma$ defined by
\begin{equation*}
{\rm Gr}(\Gamma)=\{(\omega,u)\in\Omega\times Y_L:\,u\in\Gamma(\omega)\}
\end{equation*}
belongs to the product $\sigma$-algebra $\mathcal{F}\otimes\mathcal{B}(Y_L)$, where $\mathcal{B}(Y_L)$ denotes the Borel sets on $Y_L$. To this aim, 
we observe that the function $H:\Omega\times Y_L\to\R$ defined by
\begin{equation*}
H(\omega,u)=\int_{\R^d}\nabla u\cdot\LL(x,\omega)\nabla u \,dx -g(\omega)\int_{\R^d}|\nabla u|^2 \,dx
\end{equation*}
is a Carath\'eodory function by the joint measurability of $\LL$ and Fubini's theorem. 
Since $Y_L$ is separable, this implies the joint measurability of $H$. Hence, ${\rm Gr}(\Gamma)=H^{-1}((-\infty,0))\in\mathcal{F}\otimes \mathcal{B}(Y_L)$, 
as claimed. 
By assumption $(\Omega,\mathcal{F},\mathbb{P})$ is a complete finite measure space. Therefore we may apply Aumann's measurable selection theorem (see \cite[Theorem 18.26]{AliBo}) to infer that there exists a $\mathcal{F}-\mathcal{B}(Y_L)$-measurable selection $\omega\mapsto u(\omega)\in \Gamma(\omega)$. Due to separability of $Y_L$, the function $\omega\mapsto u(\omega)$ is also strongly measurable. Moreover, by continuity also the real-valued function
\begin{equation*}
\omega\mapsto \int_{\R^d}|\nabla u(\omega,x)|^2\,dx
\end{equation*}  
is measurable. Since we assume that $\Lambda\geq 0$ and $\delta>0$, we have $0\notin \Gamma(\omega)$ for all $\omega\in\Omega$, so that the function
\begin{equation*}
\omega\mapsto v(\omega):=\frac{u(\omega)}{\int_{\R^d}|\nabla u(\omega,x)|^2\,dx}\in Y_L
\end{equation*}
is well-defined, strongly measurable, and by Poincar\'e's inequality on $B_L$ we deduce
\begin{equation*}
\expec{\int_{\R^d}|v(\omega,x)|^2+|\nabla v(\omega,x)|^2\, dx}\leq (C_L+1).
\end{equation*}
Observe that any Dirac mass belongs to the dual of $Y_L$, so that the function $(\omega,x)\mapsto v(\omega,x)$ is also of Carath\'eodory-type, whence jointly measurable. Finally, the definition of $\Gamma(\omega)$ yields the desired estimate
\eref{meas-selection}.

%%%%%%%%%%%%%%%%%%%%%%%%%%%%%%%%%%%%%%%%%%
%Cutting of unbounded Lipschitz domains
{
\section{Unbounded Lipschitz sets in the plane}
In this part of the appendix we prove the following auxiliary result that was needed in {\it Step 2} of the proof of \pref{lambdaC}.
\begin{lemma}\label{l.surgery}
Let $D\subset\R^2$ be an open set with Lipschitz boundary. For any bounded set $B\subset\R^2$ there exists a bounded open set $D_B\subset\R^2$ with Lipschitz boundary such that $D\cap B=D_B\cap B$.	
\end{lemma}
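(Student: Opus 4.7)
The plan is to truncate $D$ by intersecting it with a large disk $B_R=\{|x|<R\}$ chosen so that $\partial B_R$ meets $\partial D$ transversally, and to set $D_B:=D\cap B_R$.

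First I would fix $R_0>0$ with $\overline{B}\subset B_{R_0}$. For any $R>R_0$, the candidate set $D\cap B_R$ is automatically open (intersection of two open sets), bounded, and agrees with $D$ on $B$ since $B\subset B_{R_0}\subset B_R$. Thus the only substantive point is to select $R$ for which $\partial(D\cap B_R)$ is Lipschitz.

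To do this I would apply a Sard/coarea-type argument to the restriction of $r(x):=|x|$ to $\partial D$. Since $D$ has locally Lipschitz boundary, the compact set $\partial D\cap(\overline{B_{R_0+1}}\setminus B_{R_0})$ can be covered by finitely many open balls in each of which $\partial D$ is the graph of a Lipschitz function. On each such chart $r|_{\partial D}$ becomes a Lipschitz function of one real variable; by Rademacher's theorem and the one-dimensional coarea formula, its set of critical values has Lebesgue measure zero. Hence for a.e. $R\in(R_0,R_0+1)$ the intersection $\partial D\cap\partial B_R$ is finite and, at each such point, the tangential derivative of $r|_{\partial D}$ is non-zero, which expresses that $\partial D$ meets $\partial B_R$ transversally in the Lipschitz sense.

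Fix such an $R$ and set $D_B:=D\cap B_R$. To check that $\partial D_B=(\partial D\cap \overline{B_R})\cup(\overline D\cap\partial B_R)$ is Lipschitz I would distinguish three types of boundary points and exhibit a local Lipschitz chart for $D_B$ at each: (i) at a point of $\partial D\cap B_R$ one has $\partial D_B=\partial D$ locally, which is Lipschitz by hypothesis; (ii) at a point of $D\cap \partial B_R$ one has $\partial D_B=\partial B_R$ locally, which is smooth; (iii) at one of the finitely many corner points $p\in\partial D\cap\partial B_R$, transversality allows me to choose coordinates in which both pieces are Lipschitz graphs over the same axis meeting at $p$, so that $D_B$ is locally the subgraph of a single Lipschitz function.

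The main obstacle is point (iii): turning the a.e.-transversality produced by the coarea step into the uniform angle condition that yields a genuine Lipschitz graph. This is a standard local fact about two transversal Lipschitz curves in the plane, but must be invoked with care because $\partial D$ is only Lipschitz — its tangent cone at $p$ is a proper cone rather than a line, and one has to verify that this cone stays separated from the tangent direction of $\partial B_R$, which is precisely the content of the non-vanishing of the tangential derivative of $r|_{\partial D}$ at $p$.
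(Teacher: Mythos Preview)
Your approach via a generic disk is natural, and steps (i)--(ii) are fine. The gap is in (iii), and it is genuine: pointwise transversality at $p$ --- the existence and non-vanishing of the tangential derivative of $r|_{\partial D}$ --- does \emph{not} guarantee that $D\cap B_R$ has Lipschitz boundary there. The failure occurs in what the paper calls case~(ii): after writing both boundaries as graphs over a common axis, one may have $D=\{y<g(x)\}$ and $B_R=\{y>f(x)\}$, so that $D\cap B_R=\{f(x)<y<g(x)\}$ is a wedge with vertex~$p$. Take $g(x)=10\,x^{2}\sin(1/x)$ (Lipschitz with constant $L\approx 10$, $g'(0)=0$) and $f$ smooth with $f'(0)=-1$; transversality holds. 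But the resulting wedge $W=\{x>0,\ f(x)<y<g(x)\}$ is not a Lipschitz domain at $0$: for any direction $n$ at angle $\alpha$ with $|\tan\alpha|\le L$, lines along $n$ cross the oscillating curve $y=g(x)$ repeatedly and the slices of $W$ are disconnected; for $|\tan\alpha|>L$ each slice is a bounded interval with both endpoints tending to $0$ as the line approaches the vertex, so it can never coincide with a half-line inside any fixed cylinder. Thus no direction yields a subgraph representation. Your Sard step pins down only the tangent line of $\partial D$ \emph{at} $p$; it says nothing about the full cone $[-\arctan L,\arctan L]$ of tangent directions of $\partial D$ at \emph{nearby} points, and it is that cone --- not the tangent at $p$ --- that decides whether the wedge is a Lipschitz subgraph.

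This is exactly why the paper does not simply intersect with $Q_R$. After ensuring $\partial Q_R\cap\partial D$ is finite, it performs explicit local surgery at every intersection point in the wedge configuration --- adding a small square $Q_\varepsilon(x_0)$ to $Q_R$, or excising a carefully designed pentagon from it --- so as to force the local picture into the benign case $\{y<\min(g,ax)\}$, which is manifestly a Lipschitz subgraph. Without such a device (or some other mechanism that rules out the wedge case), your argument is incomplete.
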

\begin{proof}
Let $R_0\gg 1$ be such that $B\subset Q_{R_0/2}$ and define $p_j:\R^2\to\R$ as the projection $p_j(x)=\langle x,e_j\rangle$. Observe that by definition the set $E:=\partial D\cap Q_{2R_0}$ is countably $\mathcal{H}^1$-rectifiable, that is, $\mathcal{H}^1$-measurable (since it is closed) and, up to $\mathcal{H}^1$-null sets, it can be covered by countably many Lipschitz graphs (since $D$ has Lipschitz boundary). By the generalized area-formula (see \cite[Theorem 2.91]{AFP}) it holds that
\begin{equation*}
\int_{\R}\mathcal{H}^0(E\cap p^{-1}_j(y))\,dy\leq \mathcal{H}^1(E)<+\infty,
\end{equation*}
where $\mathcal{H}^0$ denotes the counting measure. In particular, for almost every $y\in\R$ the cardinality of the set $\partial D\cap Q_{2R_0}\cap p_j^{-1}(y)$ is finite. Thus there exists $R\in (R_0,2R_0)$ such that $\partial Q_R\cap\partial D$ has finite cardinality and moreover, no point of $\partial Q_R\cap\partial D$ is contained in the four corners of $\partial Q_R$. Fix $x_0\in\partial Q_R\cap\partial D$. Depending on the local structure around $x_0$, we will now modify $Q_R$ to obtain the desired set by intersection. Since $D$ has Lipschitz boundary, by definition there exist a one-dimensional affine subspace $H\subset\R^2$ with $x_0\in H$, a Lipschitz function $g:H\to\R$ and $r,h>0$ such that
\begin{equation*}
\begin{split}
D\cap C_{r,h}(x_0,H)&=\{x+yn:\,x\in H\cap B_r(x_0),\,-h<y<g(x)\},
\\
\partial D\cap C_{r,h}(x_0,H)&=\{x+yn:\,x\in H\cap B_r(x_0),\,y=g(x)\},
\end{split}
\end{equation*}
where $C_{r,h}(x_0,H)=\{x+yn:\,x\in H\cap B_r(x_0),\,-h<y<h\}$ is a cylinder and $n$ is a unit normal vector to $H$. In what follows we assume without loss of generality $x_0=0$.

We first show that, up to reducing $r$ and $h$, we can slightly vary $H$ and $n$. To this end, we let $0<\delta\ll 1$ and consider a rotation $R_{\delta}\in SO(2)$ such that $\|R_{\delta}-I\|<\delta$. Define  $n_{\delta}=R_{\delta}n$ and $H_{\delta}$ as the one-dimensional affine subspace which is orthogonal to $n_{\delta}$ and contains $x_0$. We define a function $g_{\delta}:H_{\delta}\to\R$ as follows: For $v\in H_{\delta}$ and $t\in\R$ set
\begin{equation*}
h(t,v)=\langle v+tn_{\delta},n\rangle-g(P_H(v+tn_{\delta})),
\end{equation*}
where $P_H$ denotes the projection onto $H$. We claim that there exists a unique $t(v)\in\R$ such that $h(t(v),v)=0$. To this end, we show that $t\mapsto h(t,v)$ is uniformly monotone for $\delta$ small enough. Indeed, for all $t_2>t_1$ and $\delta=\delta(g)$ small enough we have
\begin{align*}
h(t_2,v)-h(t_1,v)&= (t_2-t_1)\langle n_{\delta},n\rangle-g(P_H(v+t_2n_{\delta}))+g(P_H(v+t_1n_{\delta}))
\\
&\geq (t_2-t_1)(1-\delta)-{\rm Lip}(g)(t_2-t_1)P_H(n_{\delta})\geq (t_2-t_1)(1-\delta-{\rm Lip}(g)\delta)
\\
&\geq\frac{1}{2}(t_2-t_1),
\end{align*}
so that for all $v\in H_{\delta}$ there exists a unique $t(v)$ such that $h(t(v),v)=0$. In addition, two zeros $(t_1,v_1)$ and $(t_2,v_2)$ satisfy
\begin{align*}
-|v_1-v_2|+(1-\delta)|t_1-t_2|&\leq \langle v_1-v_2+(t_1-t_2)n_{\delta},n\rangle=g(P_H(v_1+t_1n_{\delta}))-g(P_H(v_2+t_2n_{\delta}))
\\
&\leq{\rm Lip}(g)(|v_1-v_2|+\delta|t_1-t_2|)
\end{align*}
which implies that the mapping $H_{\delta}\ni v\mapsto t(v)$ is Lipschitz continuous. Next we argue that this function can be used in the definition of Lipschitz boundaries. Fix $0<h^{\prime}<h/2$. If $v+tn_{\delta}\in C_{\delta,h^{\prime}}(x_0,H_{\delta})$, then for $\delta$ small enough \begin{align*}
|P_H(v+tn_{\delta})|&\leq |v|+ h^{\prime}\delta\leq (1+h^{\prime})\delta<r,
\\
|\langle v+tn_{\delta},n\rangle|&\leq |v|\delta +h^{\prime}\leq \delta^2+\frac{h}{2}<h,
\end{align*}
where we used that $x_0=0$. In particular, we deduce from the properties of $g$ in $C_{r,h}(x_0,H)$ that for such $v+tn_{\delta}$ it holds that
\begin{equation*}
v+tn_{\delta}\in D\iff \langle v+tn_{\delta},n\rangle<g(P_H(v+tn_{\delta}))\iff h(t,v)<0\iff t<t(v),
\end{equation*}
where the last equivalence follows from the monotonicity of $t\mapsto h(t,v)$. If $v+tn_{\delta}\in\partial D$, then the above equivalences hold with equalities. We infer the claimed representation
\begin{equation*}
\begin{split}
D\cap C_{\delta,h^{\prime}}(x_0,H_{\delta})&=\{v+tn_{\delta}:\,v\in H_{\delta}\cap B_{\delta}(x_0),\,-h^{\prime}<t<t(v)\},
\\
\partial D\cap C_{\delta,h^{\prime}}(x_0,H_{\delta})&=\{v+tn_{\delta}:\,v\in H_{\delta}\cap B_{\delta}(x_0),\,t=t(v)\}.
\end{split}
\end{equation*}

Using the above result, we can assume that the segment of $\partial Q_R$, which contains $x_0$, is not orthogonal to the hyperplane $H$. Hence, up to a rigid motion, there exists some $a\in\R$ such that for some appropriate $r,h>0$ there are two possibilities:
\begin{itemize}
\item[(i)] $D\cap Q_R\cap C_{r,h}(x_0,H)=\{(x,y):\,|x|<r,\,-h<y<g(x),\,y<ax\}$,
\item[(ii)] $D\cap Q_R\cap C_{r,h}(x_0,H)=\{(x,y):\,|x|<r,\,-h<y<g(x),\,y>ax\}$.
\end{itemize} 
In case (i), we can replace $g$ by the Lipschitz function $\tilde{g}(x)=\min\{g(x),ax\}$, so that the resulting boundary around $x_0$ is still locally the graph of a Lipschitz function. It remains to treat case (ii), where we locally modify the cube $Q_R$. For $x\in (-r,r)\setminus\{0\}$ we may assume that $g(x)\neq ax$. If $g(x)\leq ax$ on $(-r,r)$, then the set in (ii) is empty and we are done. If $g(x)\geq ax$ on $(-r,r)$, then we replace $Q_R$ by $Q_R\cup Q_{\e}(x_0)$ for some positive $\e\ll r$. This new set has again Lipschitz boundary and its boundary intersects $\partial D$ in the same points as $\partial Q_R$ except $x_0$. Finally, we consider the case when $g(x)-ax$ changes sign. The idea is to cut out a small piece of $Q_R$. We set $\sigma={\rm sgn}(g(r/2)-ar/2)\in\{\pm 1\}$. In the chosen local frame we define a pentagon by the following corner points: Let $q_1=0$, $q_2=h/2e_2$ and then we take some positive $\e\ll h$ such that $g(x)<h/2$ for all $x\in[0,\sigma\e]$ (recall that $g(0)=0$). Given such $\e$, we define $q_3=(\sigma\e,h/2)$ and $q_4=(\sigma\e,g(\sigma \e))$. By construction $g(\sigma\e)>a\sigma \e$, so that by Lipschitz continuity of $g$ we can find a large slope $b>0$ such that the affine function $s:\R\to\R$ given by $s(x)=\sigma b x+g(\sigma\e)-b\e$ satisfies $s(\sigma\e)=g(\sigma\e)$ and the following two properties:
\begin{align*}
&s(x)<g(x)\text{ for all }x\in (-\e,\e),
\\
& s(x_*)=ax_*\text{ for some $x_*\neq 0$ such that }{\rm sgn}(x_*)=\sigma.
\end{align*}
We then define $q_5=x_*$. Connecting consecutively the points $p_1$ to $p_5$, which are given by transforming back the local frame of the points $q_i$, we obtain an open pentagon $P$, which is contained in $Q_R$. We then replace $Q_R$ by the Lipschitz set $Q_R\setminus \overline{P}$.

Performing the different operations on all $x_0\in\partial D\cap \partial Q_R$, we obtain a new bounded Lipschitz set $S_R$. We then set $D_R=D\cap S_R$. By construction it holds that $D\cap B=D_R\cap B$. Clearly $D_R$ is open and bounded. It remains to prove that the boundary is Lipschitz. To this end, note that in general we have the inclusion 
\begin{equation*}
\partial D_R\subset(\partial D\cap S_R)\cup(\partial S_R\cap D)\cup(\partial S_R\cup\partial D).
\end{equation*}
Since $S_R$ and $D$ are both open sets, for every $x\in (\partial D\cap S_R)\cup(\partial S_R\cap D)$ one can use locally the boundary representation of $D$ and $S_R$, respectively, to show the Lipschitz property. It remains to treat $x\in \partial D\cap\partial S_R$. Due to the local construction of $S_R$ above, the only non-trivial case is given by possible pentagon parts of $\partial S_R$. In the local frame there are two possibilities: the first one is $x=0$. However, the geometric construction of the pentagon yields that for $\eta>0$ small enough
\begin{equation*}
S_R\cap B_{\eta}(0)=\{(x,y)\in B_{\eta}(0):\,y>ax,\,{\rm sgn}(x)=-\sigma\}.
\end{equation*}
Due to the definition of $\sigma$ this set has no intersection with $D$, so that $0\notin\partial D_R$. The second possibility is $x=(\sigma\e,g(\sigma\e))$. There, one can check that locally the boundary of $\partial D_R$ can be parameterized in the local frame by the Lipschitz function
\begin{equation*}
\tilde{g}(x)=\begin{cases}
g(x) &\mbox{if ${\rm sgn}(x-\sigma\e)=\sigma$,}
\\
s(x) &\mbox{if ${\rm sgn}(x-\sigma\e)=-\sigma$.}
\end{cases}
\end{equation*}
This finishes the proof.
\end{proof}
}

\section*{Acknowledgements}
We warmly thank Gilles Francfort for discussions on ellipticity conditions, and acknowledge 
the financial support from the European Research Council under
the European Community's Seventh Framework Programme (FP7/2014-2019 Grant Agreement
QUANTHOM 335410).

\section*{Conflict of interest}
\noindent The authors declare that there is no conflict of interest.

\bibliographystyle{plain}
\bibliography{strongellipticity}

\end{document}